\crefname{enumi}{}{}    
\Crefname{enumi}{}{}    
\crefname{equation}{}{}
\crefname{equations}{}{}
\tikzset{node distance=2cm, auto}
\tikzstyle{start}=[to path={(\tikztostart.#1) -- (\tikztotarget)}]
\tikzstyle{end}=[to path={(\tikztostart) -- (\tikztotarget.#1)}]
\newtheorem{theorem}{Theorem}[section]
\newtheorem{definition}[theorem]{Definition}
\newtheorem{proposition}[theorem]{Proposition}
\newtheorem{lemma}[theorem]{Lemma}
\newtheorem{corollary}[theorem]{Corollary}
\newtheorem{remark}[theorem]{Remark}
\newtheorem{example}[theorem]{Example}
\setlist[enumerate,1]{
  align=left,
  labelsep=1em,
  leftmargin=2em,
}
\let\oldequation\equation
\let\endoldequation\endequation
\renewenvironment{equation}{%
  \refstepcounter{enumi}
  \oldequation
}{%
  \endoldequation
}
\newsavebox{\eqjustbox} 
\newcommand{\eqjust}[1]{%
  \mathrel{\underset{\makebox[\wd\eqjustbox][c]{$\scriptstyle #1$}}{=}}%
}
\title{An algebra modality admitting countably many deriving transformations}
\author{Jean-Baptiste Vienney}
\date{}							
\begin{document}
\maketitle

\begin{abstract}
A differential category is an additive symmetric monoidal category, that is, a symmetric monoidal category enriched over commutative monoids, with an algebra modality, axiomatizing smooth functions, and a deriving transformation on this algebra modality, axiomatizing differentiation. Lemay proved that a comonoidal algebra modality has at most one deriving transformation, thus differentiation is unique in models of differential linear logic. It was then an open problem whether this result extends to arbitrary algebra modalities. We answer this question in the negative. We build a free ``commutative rig with a self-map'' algebra modality on the category of commutative monoids, where the self-map can be seen as an arbitrary smooth function. We then define a countable family of distinct deriving transformations $({}_{n}\mathsf{d})_{n \in \mathbb{N}}$ on this algebra modality where the parameter $n $ controls the derivative of the self-map. It shows that in a differential category, a single algebra modality may admit multiple, inequivalent notions of differentiation.
\end{abstract}

\paragraph*{Acknowledgments.} I would like to thank Richard Blute and Jean-Simon Lemay for useful discussions and their support of this research. This work was financially supported by the University of Ottawa and NSERC, under the grant awarded to Richard Blute.
\allowdisplaybreaks
\section{Introduction}
\paragraph{Differential categories.}
The notion of a differential category \cite{DIFCAT,DIFCATREV} axiomatizes differentiation in an additive symmetric monoidal category $(\mathsf{C},\otimes,I)$, that is, a symmetric monoidal category enriched over commutative monoids. Additive symmetric monoidal categories are introduced in \cref{def:additive-sym-cat}. An algebra modality is a monad $S$ on $\mathsf{C}$ equipped with additional structure, specified in \cref{def:algebra-modality}. The intuition is that for any object $A$, $SA$ is an object of smooth functions from $A$ to a commutative rig\footnote{We define a \emph{rig} $k$ as a set equipped with two binary operations $+$ and $\cdot$, and with two elements $0,1 \in k$, such that $(k,+,0)$ is a commutative monoid, $(k,\cdot,1)$ is a monoid, $\cdot$ distributes over $+$ and $0\lambda=\lambda0=0$ for every $\lambda \in k$. A \emph{commutative rig} is a rig $k$ such that the monoid $(k,\cdot,1)$ is commutative.} $k$ . A deriving transformation on the algebra modality $S$ is a natural transformation $d\colon SA \rightarrow SA \otimes A$ satisfying axioms encoding basic identities from calculus, which are specified in \cref{def:deriving-transformation}. We then call differential category the data of an additive symmetric monoidal category together with an algebra modality and a deriving transformation on this algebra modality.

In \cref{ex:sym-algebra}, we specify in details how the category of modules over a commutative rig $k$ can be made into a differential category by defining for every $k$-module $M$, the object $SM$ to be the symmetric algebra on $M$. In this example, if $M$ is a free $k$-module of rank $n$, that is if $M \simeq k^n$, then $SM \simeq k[x_1,\dots,x_n]$. Under this isomorphism, the deriving transformation is given for any $f \in k[x_1,\dots,x_n]$ by
\[\mathsf{d}(f)=\underset{1 \le i \le n}{\sum} \frac{\mathrm{d}f}{\mathrm{d}x_i} \otimes x_i.\]
\paragraph{The open problem solved in this paper.} Lemay proved that a certain type of algebra modalities called comonoidal algebra modalities admit at most one deriving transformation \cite[Cor.~8.4]{ADDITIVE}. Differential categories whose algebra modalities are comonoidal provide the categorical semantics of differential linear logic \cite{INTRODIFF}. It thus shows that differentiation is unique in models of differential linear logic

It was then an open problem whether this result extends to arbitrary algebra modalities. In this paper, we answer this question in the negative. We will construct an algebra modality $F$ on $(\mathsf{CMon},\otimes,\mathbb{N})$ and a countable family of deriving transformations $({}_{n}\mathsf{d}\colon FM \rightarrow FM \otimes M)_{n \in \mathbb{N}}$ on the algebra modality $F$. It shows that differentiation is not necessarily unique in differential categories which are not models of differential linear logic.\footnote{Note that Lemay works in the dual framework, which is the original framework for the categorical semantics of differential linear logic, so that he deals with monoidal coalgebra modalities and not comonoidal algebra modalities. But these two frameworks are in bijective correspondence as it suffices to take the opposite category to go from one to the other. We can thus phrase Lemay's result either in terms of a comonoidal algebra modality or in terms of a monoidal coalgebra modality. Constructing an algebra modality with several deriving transformations or a coalgebra modality with several deriving transformations is also a unique task. It suffices to take the opposite category to go from one to the other.}

\paragraph{Outline and intuition for our construction.}
The notion of a commutative rig with a self-map is introduced in \cref{def:com-rig-op} as a pair $(R,\mathbf{f})$ where $R$ is a commutative rig and $\mathbf{f}:R \rightarrow R$ is any function. A morphism of commutative rigs with a self-map is then a rig homomorphism making the obvious diagram commute. We denote by $\mathsf{CRig}^{\circlearrowleft}$ the category thus obtained and $U\colon\mathsf{CRig}^{\circlearrowleft} \rightarrow \mathsf{CMon}$ the forgetful functor. We will construct a left adjoint $\mathcal{F}\colon\mathsf{CMon} \rightarrow \mathsf{CRig}^{\circlearrowleft}$ to $U$. We will write $F=U \circ \mathcal{F}\colon\mathbf{CMon} \rightarrow \mathbf{CMon}$. 

Let $M$ be a commutative monoid. The commutative monoid $FM$ is obtained as a quotient of the set $F_0M$ of all syntactic expressions freely generated by an element $x_m$ for each $m \in M$, an element $0$, an element $1$ and a symbol $f$, to which the following operations can be applied: addition, multiplication, and forming a term $f(a)$ from any term $a$. We will understand the elements $x_m$ as variables and $f$ as a symbol representing an arbitrary one-variable smooth function. We can then make the monad $F$ into an algebra modality on $(\mathbf{CMon},\otimes,\mathbb{N})$. The set $F_0M$ and the commutative monoid $FM$ are defined in \cref{TROIS}. The commutative monoid $FM$ is made into a commutative rig with a self-map $\mathbf{f}:FM \rightarrow FM$ defined on an equivalence class $[a] \in FM$ by the formula $\mathbf{f}([a])=[f(a)]$. It is then shown in the same section that we obtain an initial object in the comma category $M/U$. We thus obtain our left adjoint $\mathcal{F}$ to $U$. In \cref{FOUR}, we compute in details the monad $F$ on $\mathsf{CMon}$ which is obtained from this adjunction. We then make it into an algebra modality in \cref{FIVE}. The largest section of the paper is \cref{SIX}, where we define the countable family of deriving transformations $({}_{n}\mathsf{d})_{n \in \mathbb{N}}$ on our algebra modality $F$. Finally it is proved in \cref{SEVEN} that for all $n,p \in \mathbb{N}$ such that $n \neq p$, we have ${}_{n}\mathsf{d} \neq {}_{p}\mathsf{d}$.

We will now proceed to explain the intuition on these deriving transformations.

To understand the motivation for our deriving transformations, it is easier to first understand the derivations on a polynomial ring. Given a commutative ring $k$, the set $\mathsf{Der}_k(k[x])$ of all the ($k$-linear) derivations\footnote{A $k$-linear \emph{derivation} on $k[x]$ is a $k$-linear map $\partial:M \rightarrow M$ such that $\partial(pq)=\partial(p)q+p\partial(q)$ for all $p,q \in k[x]$.} on $k[x]$ is a free $k[x]$-module of rank $1$. The following are an isomorphism of $k[x]$-modules and its inverse:
\[
\begin{aligned}
k[x] &\overset{\phi}{\rightarrow} \mathsf{Der}_k(k[x]) \\
p(x) & \mapsto p(x)\frac{\mathrm{d}}{\mathrm{d}x},
\end{aligned}
\qquad
\begin{aligned}
\mathsf{Der}_k(k[x]) &\overset{\phi^{-1}}{\rightarrow} k[x] \\
\partial & \mapsto \partial(x).
\end{aligned}
\]
To apply a derivation $\partial$ to a polynomial $q(x) \in k[x]$, it suffices to apply iteratively the axioms for a derivation and then replace $\partial(x)$ by its value. Here is an example. Let $\partial$ be the unique derivation on $k[x]$ such that $\partial(x)=x^2$. We compute
\begin{align*}
\partial(x^3+3x)=&~\partial(x^3)+\partial(3x) \\
=&~\partial(x)x^2+x\partial(x^2)+3\partial(x) \\
=&~\partial(x)x^2+x(\partial(x)x+x\partial(x))+3\partial(x) \\
=&~\partial(x)(3x^2+3) \\
=&~x^2(3x^2+3).
\end{align*}
We should not be surprised that $\partial(x)$ is not forced to be equal to $1$. The element $x \in k[x]$ must be understood as an arbitrary smooth function from $k$ to $k$ and not necessarily as the identity on $k$. We then choose $\partial(x) \in k[x]$ in order to force some differential equation to be satisfied by our function $x:k \rightarrow k$.

We will use the same idea in our construction of a countable family of deriving transformations on the algebra modality $F$. We will interpret the symbol $f$ as an arbitrary smooth function from the commutative monoid $M$ to some commutative rig $k$. But each deriving transformations ${}_{n}\mathsf{d}$ for $n \in \mathbb{N}$ will see $f$ in a distinct way, in analogy with how every derivation on a polynomial ring $k[x]$ sees $x$ in a distinct way. Given an element $[a]$ in $FM$, to compute its derivative ${}_{n}\mathsf{d}_M([a])$, it will suffice to apply the axioms for a deriving transformation iteratively, and use the chosen identity for ${}_{n}\mathsf{d}_M([f(b)])$ where $[b] \in FM$, whenever such a term appears.

We will not embarrass ourselves with complicated choices for ${}_{n}\mathsf{d}_M([f(b)])$. We will choose
\[
{}_{n}\mathsf{d}_M([f(b)])=n\cdot{}_{n}\mathsf{d}_M([b])
\]
so that $f$ must be understood, from the point of view of the deriving transformation ${}_{n}\mathsf{d}$, as the smooth function $n\cdot \mathsf{id}_M$. Note however that from an algebraic point of view, we will not have $\mathbf{f}=n \cdot \mathsf{id}_{FM}$ for any $n \in \mathbb{N}$ as it would contradict the universal property of the left adjoint $\mathcal{F}\colon\mathsf{CMon} \rightarrow \mathsf{CRig}^{\circlearrowleft}$ to $U\colon\mathsf{CRig}^{\circlearrowleft} \rightarrow \mathsf{CMon}$. This is proved in \cref{APPEN-TWO}. Finally, note that \cref{APP-TENSOR} recalls the definition of the tensor product of modules over a commutative rig $k$ and how $(\mathsf{Mod}_k,\otimes,k)$ is an additive symmetric monoidal category. Chosing $k=\mathbb{N}$, the appendix specializes to the tensor product of commutative monoids and the additive symmetric monoidal category $(\mathsf{CMon},\otimes,\mathbb{N})$.
\paragraph{Historical note}
The uniqueness of deriving transformations on a comonoidal algebra modality was first proved for free exponential modalities as Theorem 21 in \cite{FREE}. The uniqueness of deriving transformations on a general comonoidal algebra modality was then presented by Lemay during a talk at the Category Theory Octoberfest in 2022 \cite{UNIQUENESS}. In that presentation - formulated in the dual setting of coalgebra modalities - he explicitly posed the question of uniqueness for arbitrary algebra modalities and invited the audience, of which the author was a member, to contribute to its resolution. Notice that Lemay mentioned the uniqueness of derivations $D$ with $D(x)=1$ on a polynomial ring $k[x]$ as an inspiration to the uniqueness of deriving transformations on a comonoidal algebra modality. We later encountered derivations on polynomial rings again in the preface of the book \cite{MAGID} where their flexibility plays a foundational role in the construction of extensions of differential fields. It inspired the construction of multiple deriving transformations on a relative algebra modality within the broader framework of relative differential categories which was presented during a talk at Foundational Methods in Computer Science in 2024 \cite{RELATIVE}. This ultimately led to a resolution of the uniqueness question in the classical setting of differential categories, as presented in this paper. Note that the uniqueness of deriving transformations on a comonoidal algebra modality first appeared in the literature as Corollary 8.4 in \cite{ADDITIVE}.
\paragraph{Conventions, notations and proof techniques} 
\begin{itemize}
\item We will ignore the coherence isomorphisms and work as if the symmetric monoidal categories were strict monoidal, that is, we will assume that the unitors and associators are equalities of functors:
\[
I \otimes - = \mathsf{id}_{\mathsf{C}} = - \otimes I:\mathsf{C} \rightarrow \mathsf{C},
\]
\[
(- \otimes -) \otimes - = - \otimes (- \otimes -):\mathsf{C}^3 \rightarrow \mathsf{C}.
\]
\item Recall that any tensor product $M_1 \otimes \dots \otimes M_n$ of commutative monoids is generated by pure tensors, that is, every element in $M_1 \otimes \dots \otimes M_n$ is a finite sum of pure tensors
\[
m_1 \otimes \dots \otimes m_n \in M_1 \otimes \dots \otimes M_n.
\]
It follows that given another commutative monoid $N$ and two monoid homomorphisms
\[
f,g:M_1 \otimes \dots \otimes M_n \rightarrow N,
\]
to prove that $f=g$, it suffices to check that
\[
f(m_1 \otimes \dots \otimes m_n)=g(m_1 \otimes \dots \otimes m_n)
\]
for every pure tensor
\[
m_1 \otimes \dots \otimes m_n \in M_1 \otimes \dots \otimes M_n.
\]
We will often use this technique as it typically simplifies computations.
\item We now explain Sweedler notation. Suppose given commutative monoids $M,N,P$ and a monoid homomorphism
\[
f:M \rightarrow N \otimes P.
\]
Given any $m \in M$, we can write $f(m)$ as a finite sum of pure tensors
\[
f(m)=\underset{i \in I}{\sum}a_{(1)i} \otimes a_{(2)i}.
\]
We will drop the $i$ index and the sum symbol and simply write
\[
f(m)=a_{(1)} \otimes a_{(2)}.
\]
If $Q,S$ are commutative monoids and $g:N \rightarrow Q \otimes S$ is a monoid homomorphism, we will then write
\[
g(a_{(1)})=a_{(1)(1)} \otimes a_{(1)(2)}
\]
and thus
\[
(g \otimes \mathsf{id}_{P})(f(m))=a_{(1)(1)} \otimes a_{(1)(2)} \otimes a_{(2)}.
\]
Sweedler notation will be very useful for carrying out long computations.
\end{itemize}
\section{The definition of a differential category and a basic example} \label{DEUX}
\begin{definition} \label{def:algebra-modality}
An \emph{algebra modality} on a symmetric monoidal category $(\mathsf{C},\otimes,I)$ is given by the data of
\begin{itemize}
\item a monad $(S,m,u)$ on $\mathsf{C}$;
\item natural transformations $\nabla_A\colon SA \otimes SA \rightarrow SA$ and $\eta_A\colon A \rightarrow SA$;
\end{itemize}
such that for every $A \in \mathsf{C}$, the following holds:
\begin{enumerate}[resume]
\item $(SA,\nabla_A,\eta_A)$ is a commutative monoid in $(\mathsf{C},\otimes,I)$;
\item the diagram
\[
\begin{tikzcd}
\oc\oc A \otimes \oc\oc A \arrow[rr, "\nabla"] \arrow[d, "m \otimes m"'] &  & \oc\oc A \arrow[d, "m"] \\
\oc A \otimes \oc A \arrow[rr, "\nabla"']                                &  & \oc A                  
\end{tikzcd}
\]
commutes.
\end{enumerate}
\end{definition}
\begin{definition} \label{def:additive-sym-cat}
An \emph{additive symmetric monoidal category} is a symmetric monoidal category $(\mathsf{C},\otimes,I)$ such that every hom-set $\mathsf{C}[A,B]$ is a commutative monoid (with binary operation denoted by $+$ and unit denoted by $0$) and, moreover, the following equations are satisfied whenever they make sense: 
\begin{enumerate}
\item $(f+g) \circ h=(f \circ h) + (g \circ h)$, \label{add-1}
\item $f \circ (g+h)=(f \circ g)+(f \circ h)$, \label{add-2}
\item $(f+g) \otimes h = (f \otimes h) + (g \otimes h)$, \label{add-3}
\item $f \otimes (g+h) = (f \otimes g) + (f \otimes h)$, \label{add-4}
\item $0 \circ f=f \circ 0=0$, \label{add-5}
\item $0 \otimes f = f \otimes 0 = 0$. \label{add-6}
\end{enumerate}
\end{definition}
\begin{definition} \label{def:deriving-transformation}
Let $(\mathsf{C},\otimes,I)$ be an additive symmetric monoidal category. A \emph{deriving transformation} on an algebra modality $(S,m,u,\nabla,\eta)$ on $\mathsf{C}$ is a natural transformation $\mathsf{d}_A:SA \rightarrow SA \otimes A$, such that the four identities below are satisfied.
\begin{enumerate}
\item Product rule:
\[
\mathsf{d}_A \circ \nabla_A=[(\nabla_A \otimes \mathrm{id}_A) \circ (\mathrm{id}_{SA} \otimes \mathsf{d}_A)]+[(\nabla_A \otimes \mathrm{id}_A) \circ (\mathrm{id}_{SA} \otimes \sigma_{A,SA}) \circ (\mathsf{d}_A \otimes \mathrm{id}_{SA})].
\]
\item Linear rule:
\[
\mathsf{d}_A \circ u_A=\eta_A \otimes \mathrm{id}_A.
\]
\item Chain rule:
\[
\mathsf{d}_A \circ m_A=(\nabla_A \otimes \mathrm{id}_A) \circ (m_A \otimes \mathsf{d}_A) \circ \mathsf{d}_{SA}.
\]
\item Interchange rule:
\[
(\mathsf{d}_A \otimes \mathrm{id}_A) \circ \mathsf{d}_A =(\mathrm{id}_{SA} \otimes \sigma_{A,A}) \circ (\mathsf{d}_A \otimes \mathrm{id}_A) \circ \mathsf{d}_A.
\]
\end{enumerate} 
\end{definition}
A differential category is then defined as an additive symmetric monoidal category with an algebra modality and a deriving transformation on this algebra modality.
\begin{example} \label{ex:sym-algebra}
Let $k$ be a commutative rig. The category $\mathsf{Mod}_k$ of (left) $k$-modules\footnote{We define a \emph{$k$-module} as a set $M$ which is an additive commutative monoid $(M,+,0)$ and is equipped with an action $k \times M \rightarrow M$ which is denoted by $(\lambda,m) \mapsto \lambda m$, such that the following identities are satisfied: $\lambda(m+n)=\lambda m+\lambda n$, $(\lambda+\mu)m=\lambda m+\mu m$, $\lambda(\mu m)=(\lambda \mu)m$, $1m=m$, $0m=0$ and $\lambda 0=0$.} can be made into an additive symmetric monoidal category $(\mathsf{Mod}_k,\otimes,k)$. As we're working over any commutative rig and not only over commutative rings, we recall the construction of this additive symmetric monoidal category in \cref{APP-TENSOR}. In particular, this appendix contains the definition of the tensor product of $k$-modules and of a congruence on a $k$-module 

Let $M$ be a $k$-module, the \emph{symmetric algebra} of $M$ is the $k$-module defined as
\[
SM=\underset{n \ge 0}{\bigoplus}S^n(M)
\]
where
\[
S^0(M)=k,~S^1(M)=M
\]
and for every $n \ge 2$, $S^n(M)=M^{\otimes n}/\sim$ where $\sim$ is the smallest congruence on $M^{\otimes n}$ such that
\[
m_1 \otimes \dots \otimes m_n \sim m_{\sigma(1)} \otimes \dots \otimes m_{\sigma(n)}
\]
for any permutation $\sigma$ in the symmetric group $S(n)$. We denote by
\[
m_1 \otimes_s \dots \otimes_s m_n
\]
the equivalence class of $m_1 \otimes \dots \otimes m_n$ in $S^n(M)$. Note that when $n=1$, the expression
\[
m_1 \otimes_s \dots \otimes_s m_n
\]
denotes an element $m_1 \in M$ and when $n=0$, the expression
\[
m_1 \otimes_s \dots \otimes_s m_n
\]
denotes an element in $k$.

The symmetric algebra construction can be made into a functor
\[
S:\mathsf{Mod}_k \rightarrow \mathsf{Mod}_k.
\]
If $f:M \rightarrow N$ is any $k$-linear map, then
\[
Sf:SM \rightarrow SN
\]
is the unique $k$-linear map such that
\[
Sf(\lambda)=\lambda,
\]
and
\[
Sf(m_1 \otimes_s \dots \otimes_s m_n)=f(m_1) \otimes_s \dots \otimes_s f(m_n)
\]
for every $n \ge 1$. We then obtain an algebra modality $(S,m,u,\nabla,\eta)$ where
\[
m_M:SSM \rightarrow SM,
\]
\[
u_M:M \rightarrow SM,
\]
\[
\nabla_M:SM \otimes SM \rightarrow SM,
\]
\[
\eta_M:k \rightarrow SM
\]
are the unique $k$-linear maps such that
\[
m_M((m_1^1 \otimes_s \dots \otimes_s m_{n_1}^1) \boxtimes_s \dots \boxtimes_s ((m_1^p \otimes_s \dots \otimes_s m_{n_p}^p))=m_1^1 \otimes_s \dots \otimes_s m_{n_1}^1 \otimes \dots \otimes m_1^p \otimes_s \dots \otimes_s m_{n_p}^p,
\]
\[
u_M(m)=m,
\]
\[
\nabla_M((m_1 \otimes_s \dots \otimes_s m_n) \otimes (m_{n+1} \otimes_s \dots \otimes_s m_{n+p}))=m_1 \otimes_s \dots \otimes_s m_{n+p},
\]
\[
\eta_M(\lambda)=\lambda m.
\]
Moreover, a deriving transformation on this algebra modality is obtained by defining
\[
\mathsf{d}_M:SM \rightarrow SM \otimes M
\]
as the unique $k$-linear map such that
\[
\mathsf{d}_M(x_1 \otimes_s \dots \otimes_s x_n)=\underset{1 \le i \le n}{\sum}(x_1 \otimes_s \dots \otimes_s x_{i-1} \otimes_s x_{i+1} \otimes_s \dots \otimes_s x_n) \otimes x_i.
\]
\end{example}
\begin{example}
Let $k$ be a field. Then a $k$-module is just a $k$-vector space. From \cref{ex:sym-algebra}, we deduce that the symmetric algebra provides a differential modality on $\mathsf{Vec}_k$.

If $(e_i)_{i \in I}$ is a basis of the $k$-vector space $M$, then there is an isomorphism $k[x_i, i \in I] \simeq SM$ given on monomials by
\[
x_{i_1}\dots x_{i_n} \mapsto e_{i_1} \otimes_s \dots \otimes_s e_{i_n}.
\]
Under this isomorphism, the deriving transformation is given by
\[
\mathsf{d}_M(f)=\underset{1 \le i \le n}{\sum}\frac{\partial f}{\partial x_i} \otimes x_i.
\]
\end{example}
\section{The free commutative rig with a self-map on a commutative monoid} \label{TROIS}
\begin{definition} \label{def:com-rig-op}
The \emph{category of commutative rigs with a self-map}, denoted by $\mathsf{CRig}^{\circlearrowleft}$, is defined as follows:
\begin{itemize}
\item Objects (called \emph{commutative rigs with a self-map}): an object is a couple $(R,\mathbf{f})$ where $R$ is a commutative rig and $\mathbf{f}\colon R \rightarrow R$ is any function.\item Morphisms: a morphism from $(R,\mathbf{f})$ to $(R',\mathbf{f}')$ is any rig homomorphism\footnote{A \emph{rig homomorphism} $u\colon R \rightarrow R'$ where $R,R'$ are two rigs is a function which is at the same time a monoid homomorphism from $(R,+,0)$ to $(R',+,0)$ and a monoid homomorphism from $(R,\cdot,1)$ to $(R',\cdot,1)$, that is, the following identities are satisfied: $u(r+s)=u(r)+u(s)$, $u(0)=0$, $u(rs)=u(r)u(s)$ and $u(1)=1$.} $u\colon R \rightarrow R'$ such that the diagram
\[
\begin{tikzcd}
R \arrow[r, "\mathbf{f}"] \arrow[d, "u"'] & R \arrow[d, "u"] \\
R' \arrow[r, "\mathbf{f}'"']              & R'              
\end{tikzcd}
\]
commute.
\end{itemize}
The identity on $(R,\mathbf{f})$ is simply the identity rig homomorphism on $R$ and composition is given by the composition of rig homomorphisms.
\end{definition}
We obtain a forgetful functor $U\colon\mathsf{CRig}^{\circlearrowleft} \rightarrow \mathsf{CMon}$ given by $U(R,\mathbf{f})=(R,+,0)$ and $U(u)=u$. We will define a left adjoint $(\mathcal{F}\colon\mathsf{CMon} \rightarrow \mathsf{CRig}^{\circlearrowleft}) \dashv U$.
\begin{definition} \label{def:preset}
Let $M$ be a commutative monoid. We define a set $F_0M$ by induction:
\begin{enumerate}
\item we have an element $0 \in F_0M$; \label{preterms1}
\item we have an element $1 \in F_0M$; \label{preterms2}
\item for any $m \in M$, we have an element $x_m \in F_0M$; \label{preterms3}
\item for all $a,b \in F_0M$, we have an element $(a+b) \in F_0M$; \label{preterms5}
\item for all $a,b \in F_0M$, we have an element $(ab) \in F_0M$; \label{preterms4}
\item for any $a \in F_0M$, we have an element $f(a) \in F_0M$. \label{preterms6}
\end{enumerate}
We now define an equivalence relation $\sim$ on $F_0M$ by induction:
\begin{enumerate}[resume]
\item for every $a \in F_0M$, we have $a \sim a$; \label{ER1}
\item for all $a,b,c \in F_0M$, we have $((a+b)+c) \sim (a+(b+c))$; \label{ER2}
\item for every $a \in F_0M$, we have $(a+0) \sim a$; \label{ER3}
\item for all $a,b \in F_0M$, we have $(a+b) \sim (b+a)$; \label{ER4}
\item for all $a,b,c \in F_0M$, we have $((ab)c) \sim (a(bc))$; \label{ER5}
\item for every $a \in F_0M$, we have $(a1) \sim a$; \label{ER6}
\item for all $a,b \in F_0M$, we have $(ab) \sim (ba)$; \label{ER7}
\item for all $a,b,c \in F_0M$, we have $((a+b)c) \sim ((ac)+(bc))$; \label{ER8}
\item for every $a \in F_0M$, we have $(0a) \sim 0$; \label{ER9}
\item we have $x_0 \sim 0$; \label{ER10}
\item for all $m,n \in M$, we have $(x_m+x_n) \sim x_{m+n}$; \label{ER11}
\item for all $a,b \in F_0M$ such that $a \sim b$, we have $b \sim a$; \label{ER12}
\item for all $a,b,c \in F_0M$ such that $a \sim b$ and $b \sim c$, we have $a \sim c$; \label{ER13}
\item for all $a,b,c \in F_0M$ such that $a \sim b$, we have $(a+c) \sim (b+c)$; \label{ER14}
\item for all $a,b,c \in F_0M$ such that $a \sim b$, we have $(ac) \sim (bc)$; \label{ER15}
\item for all $a,b \in F_0M$ such that $a \sim b$, we have $f(a) \sim f(b)$. \label{ER16}
\end{enumerate}
We define $FM:=F_0M/\sim$.
\end{definition}
\begin{definition} \label{F0-on-morphism-identities}
Let $M,N$ be commutative monoids and let $\psi\colon M \rightarrow N$ be a monoid homomorphism. We define a function $F_0\psi\colon F_0M \rightarrow F_0N$ by induction:
\begin{enumerate}
\item $F_0\psi(0):=0$; \label{F0-id-1}
\item $F_0\psi(1):=1$; \label{F0-id-2}
\item for any $m \in M$, we set $F_0\psi(x_m):=x_{\psi(m)}$; \label{F0-id-3}
\item for all $a,b \in F_0M$, we set $F_0\psi((a+b)):=(F_0\psi(a)+F_0\psi(b))$; \label{F0-id-4}
\item for all $a,b \in F_0M$, we set $F_0\psi((ab)):=(F_0\psi(a)F_0\psi(b))$; \label{F0-id-5}
\item for every $a \in F_0M$, we set $F_0\psi(f(a)):=f(F_0\psi(a))$. \label{F0-id-6}
\end{enumerate}
\end{definition}
Note that \cref{F0-id-1,F0-id-2,F0-id-3,F0-id-4,F0-id-5,F0-id-6} are chosen precisely in order to later obtain that for all commutative monoids $M,N$, and for any monoid homomorphism $\psi:M \rightarrow N$, the function $F\psi$ is in $\mathsf{CRig}^{\circlearrowleft}[(FM,\mathbf{f}),(FN,\mathbf{f})]$.
\begin{proposition} \label{FM-is-a-rig}
Let $M$ be a commutative monoid. The set $FM$ can be made into a commutative rig with a self-map $\mathcal{F}M=(FM,\mathbf{f})$ whose operations are defined as follows:
\begin{enumerate}
\item for all $[a],[b] \in FM$, we set $[a]+[b]:=[(a+b)]$; \label{OP1}
\item for all $[a],[b] \in FM$, we set $[a][b]:=[(ab)]$; \label{OP2}
\item the additive unit is $[0]$; \label{OP3}
\item the multiplicative unit is $[1]$; \label{OP4}
\item for every $[a] \in FM$, we define $\mathbf{f}([a])=[f(a)]$. \label{OP5}
\end{enumerate} 
\end{proposition}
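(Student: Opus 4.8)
The plan is to treat \cref{FM-is-a-rig} as essentially a well-definedness check followed by a routine transport of axioms across the quotient. The relation $\sim$ of \cref{def:preset} is engineered so that clauses \cref{ER1,ER12,ER13} make it reflexive, symmetric and transitive, hence an equivalence relation; the remaining clauses split into the \emph{congruence} clauses \cref{ER14,ER15,ER16} and the clauses \cref{ER2,ER3,ER4,ER5,ER6,ER7,ER8,ER9} that encode the rig axioms directly (together with the two extra identities \cref{ER10,ER11}). I would first record that $\sim$ is an equivalence relation, then show the three operations descend to equivalence classes, and finally verify each rig axiom on representatives.

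For well-definedness, suppose $a \sim a'$ and $b \sim b'$ in $F_0M$. The self-map is the easy case: \cref{ER16} gives $f(a) \sim f(a')$ at once, so $\mathbf{f}([a]) = [f(a)]$ depends only on $[a]$. For addition and multiplication, the only subtlety I expect — and the main point demanding care — is that the congruence clauses \cref{ER14,ER15} are stated only on the right, i.e. they let us replace $a$ by $a'$ in $(a+c)$ and $(ac)$ but not directly in the second argument. I would bridge this gap using the commutativity clauses \cref{ER4,ER7}. Concretely, $(a+b) \sim (a'+b)$ by \cref{ER14}, while $(a'+b) \sim (b+a') \sim (b'+a') \sim (a'+b')$, where the outer steps are \cref{ER4} and the middle step is \cref{ER14}; transitivity then yields $(a+b)\sim(a'+b')$, so $[a]+[b]:=[(a+b)]$ is well defined. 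The multiplicative case is identical with \cref{ER7} and \cref{ER15} replacing \cref{ER4} and \cref{ER14}, giving well-definedness of $[a][b]:=[(ab)]$.

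Once the operations descend to classes, each rig axiom follows by applying the matching clause of $\sim$ to arbitrary representatives and passing to equivalence classes. Associativity, commutativity and unitality of $+$ come from \cref{ER2,ER4,ER3} (the left unit law $[0]+[a]=[a]$ combining \cref{ER4} with \cref{ER3}); associativity, commutativity and unitality of $\cdot$ come from \cref{ER5,ER7,ER6}; right distributivity is \cref{ER8}, and left distributivity is obtained by combining it with commutativity \cref{ER7}; and the annihilation law $[0][a]=[0]$ is \cref{ER9}, with $[a][0]=[0]$ recovered via \cref{ER7}. Hence $(FM,+,[0],\cdot,[1])$ is a commutative rig and $(FM,\mathbf{f})$ is a commutative rig with a self-map. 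The remaining clauses \cref{ER10,ER11} impose no obstruction: they are simply further identities holding in the quotient and are irrelevant to the rig structure, their role surfacing only later when the unit $\eta$ of the algebra modality is analyzed. The argument is mechanical throughout, and the sole place requiring genuine attention is the left/right bridging in the well-definedness step.
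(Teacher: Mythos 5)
Your proof is correct and takes essentially the same approach as the paper — well-definedness of the three operations followed by verification of each rig axiom against the matching clause of $\sim$ — and is in fact slightly more careful: the paper simply asserts $(a'+b) \sim (a'+b')$ in the well-definedness step, whereas you make explicit the necessary detour through commutativity \cref{ER4,ER7} and transitivity \cref{ER13}, since \cref{ER14,ER15} only permit replacement in the first argument. One minor slip in your closing aside: \cref{ER10,ER11} are used to show that the monad unit $u_M(m)=[x_m]$ is a monoid homomorphism, not in the analysis of the algebra-modality unit $\eta_M$, but this does not affect the proposition.
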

\begin{proof}
We prove that these operations are well-defined:
\begin{itemize}
\item Well-definedness of \cref{OP1}: Let $a,a',b,b' \in F_0M$ such that $a \sim a'$ and $b \sim b'$. We have $(a+b) \sim (a'+b)$ and $(a'+b) \sim (a'+b')$, thus $(a+b) \sim (a'+b')$. It follows that $[(a+b)]=[(a'+b')]$.
\item Well-definedness of \cref{OP2}: Let $a,a',b,b' \in F_0M$ such that $a \sim a'$ and $b \sim b'$. We have $(ab) \sim (a'b)$ and $(a'b) \sim (a'b')$, thus $(ab) \sim (a'b')$. It follows that $[(ab)]=[(a'b')]$.
\item Well-definedness of \cref{OP5}: Let $a,a' \in F_0M$ such that $a \sim a'$. We have $f(a) \sim f(a')$, thus $[f(a)]=[f(a')]$.
\end{itemize}
We check that these operations make $FM$ into a commutative rig:
\begin{itemize}
\item Let $[a],[b],[c] \in FM$. We have
\[
([a]+[b])+[c]\underset{\cref{OP1}}{=}([(a+b)]+c)\underset{\cref{OP1}}{=}[((a+b)+c)] \underset{\cref{ER2}}{=} [(a+(b+c))] \underset{\cref{OP1}}{=} [a]+[(b+c)] \underset{\cref{OP1}}{=} [a]+([b]+[c]).
\]
\item We can prove in an analogous way that for all $[a],[b],[c] \in FM$, we have
\[
([a][b])[c]=[a]([b][c]).
\]
\item Let $[a] \in FM$. We have
\[
[a]+[0] \underset{\cref{OP1}}{=} [(a+0)] \underset{\cref{ER3}}{=}[a].
\]
\item We can prove in an analogous way that $[a][1]=[a]$ for every $a \in FM$.
\item Let $[a],[b] \in FM$. We have
\[
[a][b] \underset{\cref{OP2}}{=} [(ab)] \underset{\cref{ER7}}{=} [(ba)] \underset{\cref{OP2}}{=} [b][a].
\]
\item We can prove in an analogous way that $[a]+[b]=[b]+[a]$ for all $a,b \in FM$.
\item Let $[a],[b],[c] \in FM$. We have
\[
([a]+[b])[c] \underset{\cref{OP1}}{=} [(a+b)][c]\underset{\cref{OP2}}{=} [((a+b)c)] \underset{\cref{ER8}}{=} [((ac)+(bc))] \underset{\cref{OP1}}{=} [(ac)]+[(bc)]  \underset{\cref{OP2}}{=} [a][c]+[b][c].
\]
\item Let $[a] \in FM$. We have
\[
[0][a] \underset{\cref{OP2}}{=} [(0a)] \underset{\cref{ER9}}{=} [0]. \qedhere
\]
\end{itemize}
\end{proof}
From now on, we will see $FM$ as an element of $\mathsf{CMon}$ and $\mathcal{F}M=(FM,\mathbf{f})$ as an element of $\mathsf{CRig}^{\circlearrowleft}$. It follows that 
\begin{equation} \label{F-on-objects}
FM=U(\mathcal{F}M).
\end{equation}
\begin{definition}
For any commutative monoid $M$, we define a function
\[
u_M\colon M \rightarrow FM
\]
by the formula
\begin{equation} \label{def:unit}
u_M(m):=[x_m]
\end{equation}
for every $m \in M$.
\end{definition}
\begin{proposition}
Let $M$ be a commutative monoid. The function
\[
u_M\colon M \rightarrow FM
\]
is a monoid homomorphism.
\end{proposition}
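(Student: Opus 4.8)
The plan is to verify directly the two defining conditions of a monoid homomorphism from $(M,+,0)$ to $(FM,+,[0])$: that $u_M$ sends the unit to the unit, and that it is compatible with addition. Throughout I would work at the level of equivalence classes, relying on the fact that the additive structure on $FM$ is well-defined, which was already established in \cref{FM-is-a-rig}. Recall that $u_M(m)=[x_m]$, that the additive unit of $FM$ is $[0]$ by \cref{OP3}, and that addition in $FM$ is given by \cref{OP1}.

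First I would check preservation of the unit. Since $u_M(0)=[x_0]$, this amounts to showing $[x_0]=[0]$, which is immediate from the relation $x_0 \sim 0$ of \cref{ER10}. Next I would check additivity: for $m,n \in M$, I compute
\[
u_M(m)+u_M(n)=[x_m]+[x_n]\underset{\cref{OP1}}{=}[(x_m+x_n)]\underset{\cref{ER11}}{=}[x_{m+n}]=u_M(m+n),
\]
where the middle equality uses the definition of addition on $FM$ and the last step uses the relation $(x_m+x_n)\sim x_{m+n}$ of \cref{ER11}.

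There is no genuine obstacle here: the relations \cref{ER10} and \cref{ER11} were placed in \cref{def:preset} precisely so that $u_M$ would be a monoid homomorphism, so the proof is a direct unwinding of the definitions. The only point requiring minor care is that both identities are equalities of equivalence classes, so one must already know that the operation $+$ descends to $FM$; but this is exactly what was proved in \cref{FM-is-a-rig}, and we may invoke it freely.
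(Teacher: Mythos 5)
Your proof is correct and is essentially identical to the paper's: both verify $u_M(0)=[x_0]=[0]$ via \cref{ER10} and $u_M(m)+u_M(n)=[x_m]+[x_n]=[(x_m+x_n)]=[x_{m+n}]=u_M(m+n)$ via \cref{OP1} and \cref{ER11}. Your additional remark that well-definedness of $+$ on $FM$ (from \cref{FM-is-a-rig}) is what licenses these manipulations is a sensible precaution, though the paper leaves it implicit.
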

\begin{proof}
We have
\[u_M(0) \underset{\cref{def:unit}}{=} [x_0] \underset{\cref{ER10}}{=} [0].\]
Let $m,n \in M$. We have 
\[u_M(m+n) \underset{\cref{def:unit}}{=} [x_{m+n}] \underset{\cref{ER11}}{=} [(x_m+x_n)] \underset{\cref{OP1}}{=} [x_m]+[x_n] \underset{\cref{def:unit}}{=} u_M(n)+u_M(n).\qedhere\]
\end{proof}
\begin{proposition} \label{adjoint-initial}
For any commutative monoid $M$, the couple $((FM,\mathbf{f}),u_M\colon M \rightarrow (FM,+,0))$ is an initial object in the comma category\footnote{The objects of the \emph{comma category} $M/U$ are the couples $\Big((R,\mathbf{f}),\phi\colon M \rightarrow U(R,\mathbf{f})\Big)$ where $(R,\mathbf{f})$ is a commutative rig with a self-map and $\phi$ is a monoid homomorphism. A morphism from $\Big((R,\mathbf{f}),\phi\colon M \rightarrow U(R,\mathbf{f})\Big)$ to $\Big((S,\mathbf{g}),\psi\colon M \rightarrow U(S,\mathbf{g})\Big)$ is a morphism $u \in \mathsf{CRig}^{\circlearrowleft}[(R,\mathbf{f}),(S,\mathbf{g})]$ such that the diagram
\[
\begin{tikzcd}[ampersand replacement=\&]
M \arrow[r, "\phi"] \arrow[rd, "\psi"'] \& {(R,+,0)} \arrow[d, "U(u)"] \\
                                        \& {(S,+,0)}                  
\end{tikzcd}
\]
commutes.} $M/U$.
\end{proposition}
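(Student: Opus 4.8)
The plan is to verify the universal property of the initial object directly. Concretely, given an arbitrary object $\big((R,\mathbf{g}),\phi\colon M \rightarrow U(R,\mathbf{g})\big)$ of $M/U$, I must produce a unique morphism $\overline{\phi}\colon (FM,\mathbf{f}) \rightarrow (R,\mathbf{g})$ in $\mathsf{CRig}^{\circlearrowleft}$ making the comma-category triangle commute, that is, satisfying $U(\overline{\phi}) \circ u_M = \phi$. Both existence and uniqueness will be proved by exploiting the inductive definition of $F_0M$ and of $\sim$ from \cref{def:preset}.

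For existence, I would first define an auxiliary function $\widehat{\phi}\colon F_0M \rightarrow R$ on syntactic expressions by induction on their construction, mirroring \cref{def:preset}: set $\widehat{\phi}(0)=0$, $\widehat{\phi}(1)=1$, $\widehat{\phi}(x_m)=\phi(m)$, $\widehat{\phi}((a+b))=\widehat{\phi}(a)+\widehat{\phi}(b)$, $\widehat{\phi}((ab))=\widehat{\phi}(a)\,\widehat{\phi}(b)$, and $\widehat{\phi}(f(a))=\mathbf{g}(\widehat{\phi}(a))$. The crucial step is to show that $\widehat{\phi}$ is compatible with $\sim$, so that it descends to a well-defined map $\overline{\phi}([a]):=\widehat{\phi}(a)$ on $FM=F_0M/\sim$. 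This is proved by induction on the generation of $a \sim b$, checking one clause per generating rule: the rig-axiom clauses hold because $(R,+,\cdot,0,1)$ is a commutative rig, the clauses \cref{ER10} and \cref{ER11} hold \emph{precisely} because $\phi$ is a monoid homomorphism (supplying $\phi(0)=0$ and $\phi(m+n)=\phi(m)+\phi(n)$), and the reflexivity, symmetry, transitivity, and congruence clauses are either immediate or follow from the inductive hypothesis together with the recursive clauses defining $\widehat{\phi}$.

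Once well-definedness is established, checking that $\overline{\phi}$ is a morphism of $M/U$ is routine bookkeeping. It preserves $+,\cdot,0,1$ directly from the definition of the operations on $FM$ in \cref{OP1,OP2,OP3,OP4}, and it commutes with the self-maps since $\overline{\phi}(\mathbf{f}([a]))=\widehat{\phi}(f(a))=\mathbf{g}(\widehat{\phi}(a))=\mathbf{g}(\overline{\phi}([a]))$ by \cref{OP5}; finally the triangle commutes because $U(\overline{\phi})(u_M(m))=\overline{\phi}([x_m])=\phi(m)$ by \cref{def:unit}. For uniqueness, suppose $\psi\colon (FM,\mathbf{f}) \rightarrow (R,\mathbf{g})$ is any morphism of $M/U$; I would show $\psi([a])=\widehat{\phi}(a)$ for every $a \in F_0M$ by structural induction on $a$. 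On $0$ and $1$ this holds because $\psi$ preserves the rig units, on $x_m$ it is forced by the commuting triangle ($\psi([x_m])=\psi(u_M(m))=\phi(m)$), on $(a+b)$ and $(ab)$ it follows from $\psi$ being a rig homomorphism plus the inductive hypothesis, and on $f(a)$ it follows from $\psi$ commuting with the self-maps, giving $\psi([f(a)])=\mathbf{g}(\psi([a]))=\mathbf{g}(\widehat{\phi}(a))=\widehat{\phi}(f(a))$. Hence $\psi=\overline{\phi}$.

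The only genuine work lies in the well-definedness step, and even there each individual clause is a one-line verification; everything else is dictated by the universal-property bookkeeping. I expect the main obstacle to be organizing the induction over $\sim$ cleanly: since $\sim$ is generated simultaneously by equational clauses and by closure (symmetry, transitivity, congruence) clauses, the induction must range over the derivation witnessing $a \sim b$ rather than over the structure of a single term, and one must be careful that the congruence clauses \cref{ER14,ER15,ER16} are handled via the inductive hypothesis rather than re-derived from scratch.
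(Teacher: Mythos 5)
Your proposal is correct and follows essentially the same route as the paper's proof: the same auxiliary map on preterms (your $\widehat{\phi}$ is the paper's $h_0$), the same induction over the derivation of $a \sim b$ for well-definedness with the same observation that clauses \cref{ER10,ER11} are exactly where the hypothesis that $\phi$ is a monoid homomorphism is used, and the same structural induction on $F_0M$ for uniqueness. The only (cosmetic) difference is that the paper proves uniqueness first by comparing two arbitrary candidate morphisms composed with the projection $\pi\colon F_0M \rightarrow FM$, whereas you compare an arbitrary candidate against the constructed one after establishing existence.
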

\begin{proof}
Let $((R,\mathbf{g}),\phi\colon M \rightarrow (R,+,0))$ be an object in $M/U$. We must prove that there exists a unique morphism of commutative rigs with a self-map $h\colon(FM,\mathbf{f}) \rightarrow (R,\mathbf{g})$ such that the diagram
\begin{align*}
\begin{tikzcd}[ampersand replacement=\&]
M \arrow[rr, "u_M"] \arrow[rrd, "\phi"'] \&  \& {(FM,+,0)} \arrow[d, "U(h)"] \\
                                            \&  \& (R,+,0)                          
\end{tikzcd}
\end{align*}
commutes. 

Unpacking everything, we must prove that there exists a unique function $h\colon FM \rightarrow R$ such that:
\begin{enumerate}
\item $h([0])=0$; \label{one}
\item $h([1])=1$; \label{two}
\item $h([x_m])=\phi(m)$ for every $m \in M$; \label{three}
\item $h([(a+b)])=h([a])+h([b])$ for all $[a],[b] \in FM$; \label{four}
\item $h([(ab)])=h([a])h([b])$ for all $[a],[b] \in FM$; \label{five}
\item $h([f(a)])=\mathbf{g}(h([a]))$ for every $[a] \in FM$. \label{six}
\end{enumerate}

Let $h_1,h_2\colon FM \rightarrow R$ be two functions satisfying \cref{one,two,three,four,five,six}. Let $\pi\colon F_0M \rightarrow FM$ be defined by 
\begin{equation} \label{pi}
\pi(a)=[a]
\end{equation}
for every $a \in F_0M$.

For $i=1,2$, we have:
\begin{enumerate}[resume, start=8]
\item $(h_i \circ \pi)(0) \underset{\cref{pi}}{=}h_i([0]) \underset{\cref{one}}{=}0$; \label{hpihypo1}
\item $(h_i \circ \pi)(1) \underset{\cref{pi}}{=}h_i([1]) \underset{\cref{two}}{=}1$; \label{hpihypo2}
\item $(h_i \circ \pi)(x_m) \underset{\cref{pi}}{=}h_i([x_m]) \underset{\cref{three}}{=}\phi(m)$ for every $m \in M$; \label{hpihypo3}
\item $(h_i \circ \pi)((a+b)) \underset{\cref{pi}}{=}h_i([(a+b)]) \underset{\cref{four}}{=} 
h_i([a])+h_i([b])
\underset{\cref{pi}}{=}
(h_i \circ \pi)(a)+(h_i \circ \pi)(b)$ for all $a,b \in F_0M$; \label{hpihypo4}
\item $(h_i \circ \pi)((ab)) \underset{\cref{pi}}{=}h_i([(ab)]) \underset{\cref{five}}{=}
h_i([a])h_i([b])
\underset{\cref{pi}}{=}
(h_i \circ \pi)(a)(h_i \circ \pi)(b)$ for all $a,b \in F_0M$; \label{hpihypo5}
\item $(h_i \circ \pi)(f(a)) \underset{\cref{pi}}{=}h_i([f(a)]) \underset{\cref{six}}{=} \mathbf{g}(h_i([a])) \underset{\cref{pi}}{=}\mathbf{g}((h_i \circ \pi)(a))$ for every $a \in F_0M$. \label{hpihypo6}
\end{enumerate}

We will prove by induction on $F_0M$ that $(h_1 \circ \pi)(a)=(h_2 \circ \pi)(a)$ for every $a \in F_0M$. We must check cases \cref{preterms1,preterms2,preterms3,preterms4,preterms5,preterms6}.
\begin{itemize}
\item We have $(h_1 \circ \pi)(0)\underset{\cref{hpihypo1}}{=}0\underset{\cref{hpihypo1}}{=}(h_2 \circ \pi)(0)$.
\item We have $(h_1 \circ \pi)(1)\underset{\cref{hpihypo2}}{=}1\underset{\cref{hpihypo2}}{=}(h_2 \circ \pi)(1)$.
\item We have $(h_1 \circ \pi)(x_m) \underset{\cref{hpihypo3}}{=} \phi(m) \underset{\cref{hpihypo3}}{=} (h_2 \circ \pi)(x_m)$ for every $m \in M$.
\item Let $a,b \in F_0M$ such that
\begin{equation} \label{AA1}
(h_1 \circ \pi)(a)=(h_2 \circ \pi)(a)
\end{equation}
and
\begin{equation} \label{AA2}
(h_2 \circ \pi)(b)=(h_2 \circ \pi)(b).
\end{equation}
We obtain
\[ 
(h_1 \circ \pi)((a+b)) \underset{\cref{hpihypo4}}{=} (h_1 \circ \pi)(a)+(h_1 \circ \pi)(b) \underset{\cref{AA1}\:\&\:\cref{AA2}}{=} (h_2 \circ \pi)(a)+(h_2 \circ \pi)(b) \underset{\cref{hpihypo4}}{=} (h_2 \circ \pi)((a+b)).
\]
\item Let $a,b \in F_0M$ such that \cref{AA1,AA2} are satisfied. We obtain
\[ 
(h_1 \circ \pi)((ab)) \underset{\cref{hpihypo5}}{=} (h_1 \circ \pi)(a)(h_1 \circ \pi)(b) \underset{\cref{AA1}\:\&\:\cref{AA2}}{=} (h_2 \circ \pi)(a)(h_2 \circ \pi)(b) \underset{\cref{hpihypo5}}{=} (h_2 \circ \pi)((ab)).
\]
\item Let $a \in F_0M$ such that \cref{AA1} is satisfied. We obtain
\[
(h_1 \circ \pi)(f(a))\underset{\cref{hpihypo6}}{=}\mathbf{g}((h_1 \circ \pi)(a)) \underset{\cref{AA1}}{=} \mathbf{g}((h_2 \circ \pi)(a))\underset{\cref{hpihypo6}}{=}(h_2 \circ \pi)(f(a)).
\]
\end{itemize}
We conclude that $h_1 \circ \pi=h_2 \circ \pi$. Since $\pi$ is a surjection it follows that $h_1=h_2$.

We have proved that if there exists a function $h\colon FM \rightarrow R$ satisfying \cref{one,two,three,four,five,six} then it is unique. We will now prove that such a function exists.

We define by induction on $F_0M$ a function
\[
h_0\colon F_0M \rightarrow R
\]
as follows:
\begin{enumerate}[resume, start=16]
\item $h_0(0)=0$; \label{h0-1}
\item $h_0(1)=1$; \label{h0-2}
\item $h_0(x_m)=\phi(m)$ for every $m \in M$; \label{h0-3}
\item $h_0((a+b))=h_0(a)+h_0(b)$ for all $a,b \in F_0M$; \label{h0-4}
\item $h_0((ab))=h_0(a)h_0(b)$ for all $a,b \in F_0M$; \label{h0-5}
\item $h_0(f(a))=\mathbf{g}(h_0(a))$ for every $a \in F_0M$. \label{h0-6}
\end{enumerate}
We will now prove by induction on $\sim$ that for all $a,b \in F_0M$, if $a \sim b$, then $h_0(a)=h_0(b)$. We must check cases \cref{ER1,ER2,ER3,ER4,ER5,ER6,ER7,ER8,ER9,ER10,ER11,ER12,ER13,ER14,ER15,ER16}.
\begin{itemize}
\item Case \cref{ER1}: We have $h_0(a)=h_0(a)$.

\item Case \cref{ER2}: We have
\begin{align*}
h_0(((a+b)+c))\underset{\cref{h0-4}}{=}&~h_0((a+b))+h_0(c) \\
\underset{\cref{h0-4}}{=}&~(h_0(a)+h_0(b))+h_0(c) \\
\underset{\phantom{\cref{h0-4}}}{=}&~h_0(a)+(h_0(b)+h_0(c)) \\
\underset{\cref{h0-4}}{=}&~h_0(a)+h_0((b+c)) \\
\underset{\cref{h0-4}}{=}&~h_0((a+(b+c))).
\end{align*}
\item Case \cref{ER3}: We have
\[
h_0((a+0))\underset{\cref{h0-4}}{=}h_0(a)+h_0(0)
\underset{\cref{h0-1}}{=}h_0(a)+0
=h_0(a).
\]
\item Case \cref{ER4}: We have
\[
h_0((a+b))\underset{\cref{h0-4}}{=}h_0(a)+h_0(b)
\underset{\phantom{\cref{h0-4}}}{=}h_0(b)+h_0(a)
\underset{\cref{h0-4}}{=}h_0((b+a)).
\]
\item Case \cref{ER5}:
We have
\begin{align*}
h_0(((ab)c))\underset{\cref{h0-5}}{=}&~h_0((ab))h_0(c) \\
\underset{\cref{h0-5}}{=}&~(h_0(a)h_0(b))h_0(c) \\
\underset{\phantom{\cref{h0-4}}}{=}&~h_0(a)(h_0(b)h_0(c)) \\
\underset{\cref{h0-5}}{=}&~h_0(a)h_0((bc)) \\
\underset{\cref{h0-5}}{=}&~h_0((a(bc))).
\end{align*}
\item Case \cref{ER6}:
We have
\[
h_0((a1))\underset{\cref{h0-5}}{=}h_0(a)h_0(1)
\underset{\cref{h0-2}}{=}h_0(a)1
\underset{\cref{h0-5}}{=}h_0(a).
\]
\item Case \cref{ER7}: We have
\[
h_0((ab))\underset{\cref{h0-5}}{=}h_0(a)h_0(b)
\underset{\phantom{\cref{h0-4}}}{=}h_0(b)h_0(a)
\underset{\cref{h0-5}}{=}h_0((ba)).
\]
\item Case \cref{ER8}: We have
\begin{align*}
h_0(((a+b)c))\underset{\cref{h0-5}}{=}&~h_0((a+b))h_0(c) \\
\underset{\cref{h0-4}}{=}&~(h_0(a)+h_0(b))h_0(c) \\
\underset{\phantom{\cref{h0-5}}}{=}&~(h_0(a)h_0(c))+(h_0(b)h_0(c)) \\
\underset{\cref{h0-5}}{=}&~h_0((ac))+h_0((bc)) \\
\underset{\cref{h0-4}}{=}&~h_0(((ac)+(bc))).
\end{align*}
\item Case \cref{ER9}: We have
\[
h_0((0a))\underset{\cref{h0-5}}{=}h_0(0)h_0(a)
\underset{\cref{h0-1}}{=}0\cdot h_0(a)
\underset{\phantom{\cref{h0-1}}}{=}0
\underset{\cref{h0-1}}{=}h_0(0).
\]
\item Case \cref{ER10}: We have 
\[h_0(x_0)\underset{\cref{h0-3}}{=}\phi(0)\underset{\phantom{\cref{h0-1}}}{=}0\underset{\cref{h0-1}}{=}h_0(0).\]
\item Case \cref{ER11}: We have
\[
h_0((x_m+x_n))\underset{\cref{h0-4}}{=}h_0(x_m)+h_0(x_n)
\underset{\cref{h0-3}}{=}\phi(m)+\phi(n)
\underset{\phantom{\cref{h0-1}}}{=}\phi(m+n)
\underset{\cref{h0-3}}{=}h_0(x_{m+n}).
\]
\item Case \cref{ER12}: Suppose that $b \sim a$ is obtained from $a \sim b$. Suppose also that $h_0(a)=h_0(b)$. We obtain $h_0(b)=h_0(a)$.
\item Case \cref{ER13}: Suppose that $a \sim c$ is obtained from $a \sim b$ and $b \sim c$. Suppose also that $h_0(a)=h_0(b)$, $h_0(b)=h_0(c)$. We obtain $h_0(a)=h_0(b)=h_0(c).$ 
\item Case \cref{ER14}: Suppose that $(a+c) \sim (b+c)$ is obtained from $a \sim b$. Suppose also that $h_0(a)=h_0(b)$. We obtain 
\[h_0((a+c))\underset{\cref{h0-4}}{=}h_0(a)+h_0(c)\underset{\phantom{\cref{h0-5}}}{=}h_0(b)+h_0(c)\underset{\cref{h0-4}}{=}h_0((b+c)).\]
\item Case \cref{ER15}: Suppose that $(ac) \sim (bc)$ is obtained from $a \sim b$. Suppose also that $h_0(a)=h_0(b)$. We obtain 
\[h_0((ac))\underset{\cref{h0-5}}{=}h_0(a)h_0(c)\underset{\phantom{\cref{h0-5}}}{=}h_0(b)h_0(c)\underset{\cref{h0-5}}{=}h_0((bc)).\]
\item Case \cref{ER16}: Suppose that $f(a) \sim f(b)$ is obtained from $a \sim b$. Suppose also that $h_0(a)=h_0(b)$. We obtain 
\[h_0(f(a)) \underset{\cref{h0-6}}{=} \mathbf{g}(h_0(a)) \underset{\phantom{\cref{h0-5}}}{=} \mathbf{g}(h_0(b))\underset{\cref{h0-6}}{=} h_0(f(b))).\]
\end{itemize}
We can thus define $h\colon FM \rightarrow R$ by the formula
\begin{equation} \label{def-of-h}
h([a])=h_0(a)
\end{equation}
 for every $[a] \in FM$. We will now check \cref{one,two,three,four,five,six}.
\begin{itemize}
\item Proof of \cref{one}: $h([0]) \underset{\cref{def-of-h}}{=} h_0(0) \underset{\cref{h0-1}}{=} 0$;
\item Proof of \cref{two}: $h([1]) \underset{\cref{def-of-h}}{=} h_0(1) \underset{\cref{h0-2}}{=} 1$;
\item Proof of \cref{three}: for every $m \in M$, we have $h([x_m]) \underset{\cref{def-of-h}}{=} h_0(x_m) \underset{\cref{h0-3}}{=} \phi(m)$;
\item Proof of \cref{four}: for all $[a],[b] \in FM$, we have 
\[h([(a+b)]) \underset{\cref{def-of-h}}{=} h_0((a+b)) \underset{\cref{h0-4}}{=} h_0(a)+h_0(b) \underset{\cref{def-of-h}}{=} h([a])+h([b]);\]
\item Proof of \cref{five}: for all $[a],[b] \in FM$, we have 
\[h([(ab]) \underset{\cref{def-of-h}}{=} h_0((ab)) \underset{\cref{h0-5}}{=} h_0(a)h_0(b) \underset{\cref{def-of-h}}{=} h([a])h([b]);\]
\item Proof of \cref{six}: for every $[a] \in FM$, we have 
\[h([f(a)]) \underset{\cref{def-of-h}}{=} h_0(f(a)) \underset{\cref{h0-6}}{=} \mathbf{g}(h_0(a)) \underset{\cref{def-of-h}}{=} \mathbf{g}(h([a])). \qedhere\]
\end{itemize}
\end{proof}
\cref{adjoint-initial} proves that $U\colon\mathsf{CRig}^{\circlearrowleft} \rightarrow \mathsf{CMon}$ has a left adjoint $\mathcal{F}\colon\mathsf{CMon} \rightarrow \mathsf{CRig}^{\circlearrowleft}$. In the next section, we will compute the monad on $\mathsf{CMon}$ with underlying functor $F=U \circ \mathcal{F}\colon\mathsf{CMon} \rightarrow \mathsf{CMon}$.
\section{The free commutative rig with a self-map monad $F$ on $\mathsf{CMon}$} \label{FOUR}
The next proposition follows from the equivalence between several definitions of an adjunction. See IV.1, Theorem 2 in \cite{MAC}.
\begin{proposition} \label{def-on-morphisms}
The left adjoint $\mathcal{F}\colon \mathsf{CMon} \rightarrow \mathsf{CRig}^{\circlearrowleft}$ to $U\colon \mathsf{CRig}^{\circlearrowleft} \rightarrow \mathsf{CMon}$ obtained from \cref{adjoint-initial} is defined as follows on morphisms.

Let $M,N$ be two commutative monoids and let $\psi\colon M \rightarrow N$ be a monoid homomorphism. We obtain a monoid homomorphism $u_N \circ \psi\colon M \rightarrow FN$. The morphism of commutative rigs with a self-map $\mathcal{F}\psi\colon(FM,\mathbf{f}) \rightarrow (FN,\mathbf{f})$ is the unique morphism of commutative rigs with a self-map such that the diagram
\begin{align} \label{diag:action-on-morphisms}
\begin{tikzcd}[ampersand replacement=\&]
M \arrow[rr, "u_M"] \arrow[d, "\psi"'] \&  \& {FM} \arrow[d, "U(\mathcal{F}\psi)"] \\
N \arrow[rr, "u_N"']                \&  \& {FN}                             
\end{tikzcd}
\end{align}
commutes.
\end{proposition}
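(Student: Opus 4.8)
The plan is to derive this directly from the universal property established in \cref{adjoint-initial}, together with the standard correspondence between left adjoints and universal arrows (the cited \cite{MAC}, IV.1, Theorem 2). The key observation is that the initiality of $\big((FM,\mathbf{f}),u_M\big)$ in the comma category $M/U$ is exactly the assertion that $u_M\colon M \rightarrow U(\mathcal{F}M)$ is a universal arrow from $M$ to $U$; this is precisely the hypothesis needed to extend the object assignment $M \mapsto \mathcal{F}M$ uniquely to a left adjoint whose unit is $u$.

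First I would observe that, given the monoid homomorphism $\psi\colon M \rightarrow N$, the composite $u_N \circ \psi\colon M \rightarrow FN$ is again a monoid homomorphism, and hence $\big((FN,\mathbf{f}),u_N\circ\psi\big)$ is a genuine object of the comma category $M/U$. By the initiality of $\big((FM,\mathbf{f}),u_M\big)$ proved in \cref{adjoint-initial}, there is a unique morphism in $M/U$ from this initial object to $\big((FN,\mathbf{f}),u_N\circ\psi\big)$. Unpacking the definition of a morphism in $M/U$, such a morphism is precisely a morphism of commutative rigs with a self-map $g\colon(FM,\mathbf{f}) \rightarrow (FN,\mathbf{f})$ satisfying $U(g) \circ u_M = u_N \circ \psi$, which is exactly the commutativity of \cref{diag:action-on-morphisms}. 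I would then define $\mathcal{F}\psi$ to be this unique $g$.

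Since each $\mathcal{F}\psi$ is singled out by a universal property, functoriality comes essentially for free and need only be noted: the identity $\mathsf{id}_{(FM,\mathbf{f})}$ witnesses $U(\mathsf{id}) \circ u_M = u_M \circ \mathsf{id}_M$, so $\mathcal{F}\mathsf{id}_M = \mathsf{id}_{\mathcal{F}M}$ by uniqueness, and for composable $\psi,\chi$ one checks that $\mathcal{F}\chi \circ \mathcal{F}\psi$ satisfies the defining equation characterizing $\mathcal{F}(\chi \circ \psi)$, whence the two coincide. I do not anticipate a genuine obstacle here: the entire content is the bookkeeping of translating the initiality statement of \cref{adjoint-initial} into the unit-based description of a left adjoint, and the only point demanding care is verifying that a morphism in the comma category $M/U$ is literally a morphism of commutative rigs with a self-map making \cref{diag:action-on-morphisms} commute.
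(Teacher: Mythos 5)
Your proposal is correct and is essentially the paper's own argument: the paper simply cites the standard equivalence between universal arrows and left adjoints (Mac Lane, IV.1, Theorem~2), which is exactly the correspondence you unpack, defining $\mathcal{F}\psi$ as the unique morphism in $M/U$ from the initial object $\big((FM,\mathbf{f}),u_M\big)$ to $\big((FN,\mathbf{f}),u_N\circ\psi\big)$ and obtaining functoriality from uniqueness. Nothing is missing; you have merely written out the details that the paper delegates to the citation.
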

\begin{remark}
The function $\mathcal{F}\psi\colon FM \rightarrow FN$ in \cref{def-on-morphisms} satisfies:
\begin{enumerate}
\item $\mathcal{F}\psi([0])=[0]$; \label{Fpsi-1}
\item $\mathcal{F}\psi([1])=[1]$; \label{Fpsi-2}
\item for every $m \in M$, we have $\mathcal{F}\psi([x_m])=[x_{\psi(m)}]$; \label{Fpsi-3}
\item for all $[a],[b] \in FM$, we have $\mathcal{F}\psi([(a+b)])=\mathcal{F}\psi([a])+\mathcal{F}\psi([b])$; \label{Fpsi-4}
\item for all $[a],[b] \in FM$, we have $\mathcal{F}\psi([(ab])=\mathcal{F}\psi([a])\mathcal{F}\psi([b])$; \label{Fpsi-5}
\item for every $[a] \in FM$, we have $\mathcal{F}\psi([f(a)])=\mathbf{f}(\mathcal{F}\psi([a]))$. \label{Fpsi-6}
\end{enumerate}
The identities \cref{Fpsi-1,Fpsi-2,Fpsi-4,Fpsi-5,Fpsi-6} follow from $\mathcal{F}\psi$ being a morphism of commutative rigs with a self-map and the identity \cref{Fpsi-3} follows from \cref{diag:action-on-morphisms}.
\end{remark}
For any two commutative monoids $M,N$ and for every monoid homomorphism $\psi\colon M \rightarrow N$, we define
\begin{equation} \label{F-on-morphisms}
F\psi:=U(\mathcal{F}\psi). 
\end{equation}
The equations \cref{F-on-objects,F-on-morphisms} define a functor $F\colon\mathbf{CMon} \rightarrow \mathbf{CMon}$ such that
\begin{equation} \label{F-from-U}
F=U \circ \mathcal{F}.
\end{equation}
\begin{remark}
When we will consider a commutative monoid of the form $FFM$, we will write $x_m \in FM$ for any $m \in M$ but rather use the letter $y$ instead of $x$ and write $y_a \in FFM$ for any $a \in FM$, in analogy with how we write $R[x,y]=R[x][y]$ and not $R[x,x]=R[x][x]$ when considering a polynomial ring in two variables.

In the same way, we will write $\mathcal{F}M=(FM,\mathbf{f})$ and $\mathbf{f}([a])=[f(a)]$ for any $a \in F_0M$, but rather write $\mathcal{F}FM=(FM,\mathbf{g})$ and $\mathbf{g}([a])=[g(a)]$ for any $a \in F_0FM$.
\end{remark}
\begin{remark} \label{F-on-morphism-identities}
The function $F\psi\colon FM \rightarrow FN$ satisfies the same identities as $\mathcal{F}\psi$, namely:
\begin{enumerate}
\item $F\psi([0])=[0]$; \label{Fromanpsi-1}
\item $F\psi([1])=[1]$; \label{Fromanpsi-2}
\item for every $m \in M$, we have $F\psi([x_m])=[x_{\psi(m)}]$ \label{Fromanpsi-3}
\item for all $[a],[b] \in FM$, we have $F\psi([(a+b)])=F\psi([a])+F\psi([b])$; \label{Fromanpsi-4}
\item for all $[a],[b] \in FM$, we have $F\psi([(ab])=F\psi([a])F\psi([b])$; \label{Fromanpsi-5}
\item for every $[a] \in FM$, we have $F\psi([f(a)])=\mathbf{f}(F\psi([a]))$. \label{Fromanpsi-6}
\end{enumerate}
These identities follow from \cref{F-from-U} and \cref{Fpsi-1,Fpsi-2,Fpsi-3,Fpsi-4,Fpsi-5,Fpsi-6}.
\end{remark}
Recall that we defined a function $F_0\psi:F_0M \rightarrow F_0N$ for all commutative monoids $M,N$ and monoid homomorphism $\psi:M \rightarrow N$ in \cref{F0-on-morphism-identities}.
\begin{proposition} \label{Fpsi-and-F0psi}
For every $a \in F_0M$, we have
\[
F\psi([a])=[F_0\psi(a)].
\]
\end{proposition}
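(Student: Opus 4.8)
The plan is to prove the identity by structural induction on the inductively defined set $F_0M$, that is, by checking the six generating cases \cref{preterms1,preterms2,preterms3,preterms4,preterms5,preterms6}. Concretely, I regard both sides as functions $F_0M \rightarrow FN$, namely $a \mapsto F\psi([a])$ and $a \mapsto [F_0\psi(a)]$, and show that they agree on every $a \in F_0M$. The whole argument is a matter of matching, term by term, the identities satisfied by $F\psi$ (recorded in \cref{F-on-morphism-identities}), the operations on the rig $FN$ (from \cref{FM-is-a-rig}), and the defining clauses of $F_0\psi$ (from \cref{F0-on-morphism-identities}).

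First I would dispatch the three base cases. For $a=0$ we have $F\psi([0]) = [0] = [F_0\psi(0)]$ using \cref{Fromanpsi-1} and \cref{F0-id-1}; for $a=1$ we use \cref{Fromanpsi-2} and \cref{F0-id-2}; and for $a=x_m$ with $m \in M$ we use \cref{Fromanpsi-3} together with \cref{F0-id-3}, since both sides equal $[x_{\psi(m)}]$.

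For the inductive step I would treat the three constructors. In the additive case $a=(b+c)$, assuming the induction hypotheses $F\psi([b])=[F_0\psi(b)]$ and $F\psi([c])=[F_0\psi(c)]$, I compute
\[
F\psi([(b+c)]) \underset{\cref{Fromanpsi-4}}{=} F\psi([b]) + F\psi([c]) = [F_0\psi(b)] + [F_0\psi(c)] \underset{\cref{OP1}}{=} [(F_0\psi(b)+F_0\psi(c))] \underset{\cref{F0-id-4}}{=} [F_0\psi((b+c))].
\]
The multiplicative case $a=(bc)$ is identical after replacing \cref{Fromanpsi-4}, \cref{OP1} and \cref{F0-id-4} by \cref{Fromanpsi-5}, \cref{OP2} and \cref{F0-id-5}. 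Finally, for the self-map case $a=f(b)$, using the single induction hypothesis $F\psi([b])=[F_0\psi(b)]$, I chain \cref{Fromanpsi-6}, the definition of $\mathbf{f}$ from \cref{OP5}, and \cref{F0-id-6} to obtain
\[
F\psi([f(b)]) \underset{\cref{Fromanpsi-6}}{=} \mathbf{f}(F\psi([b])) = \mathbf{f}([F_0\psi(b)]) \underset{\cref{OP5}}{=} [f(F_0\psi(b))] \underset{\cref{F0-id-6}}{=} [F_0\psi(f(b))].
\]

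I do not expect a genuine obstacle here; the content is entirely that the clauses of \cref{F0-on-morphism-identities} were set up to mirror, at the level of syntactic representatives, the algebraic identities that $F\psi$ satisfies as (the underlying monoid homomorphism of) a morphism of commutative rigs with a self-map. The only point requiring a little care is conceptual rather than computational: $F_0\psi$ is defined on raw expressions and is not a priori compatible with $\sim$, so the statement is read as an equality holding for each chosen representative $a$. A posteriori, since the left-hand side depends only on $[a]$, the class $[F_0\psi(a)]$ is forced to be independent of the representative, but this observation is not needed to run the induction.
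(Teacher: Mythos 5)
Your proof is correct and follows essentially the same route as the paper: a structural induction on $F_0M$ with exactly the same case analysis, invoking \cref{Fromanpsi-1,Fromanpsi-2,Fromanpsi-3,Fromanpsi-4,Fromanpsi-5,Fromanpsi-6} against the defining clauses \cref{F0-id-1,F0-id-2,F0-id-3,F0-id-4,F0-id-5,F0-id-6} and the rig operations \cref{OP1,OP2,OP5}. Your closing remark on reading the statement per representative (with well-definedness of $[F_0\psi(a)]$ as a consequence rather than a prerequisite) is a sound clarification, though the paper does not spell it out.
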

\begin{proof}
By induction on $F_0M$, using \cref{F-on-morphism-identities} and \cref{F0-on-morphism-identities}.
\begin{itemize}
\item $F\psi([0]) \underset{\cref{Fromanpsi-1}}{=} [0] \underset{\cref{F0-id-1}}{=} [F_0\psi(0)]$.
\item $F\psi([1]) \underset{\cref{Fromanpsi-2}}{=} [1] \underset{\cref{F0-id-2}}{=} [F_0\psi(1)]$.
\item For every $m \in M$, we have $F\psi([x_m]) \underset{\cref{Fromanpsi-3}}{=} [x_{\psi(m)}] \underset{\cref{F0-id-3}}{=} [F_0\psi(x_m)]$.
\item Let $a,b \in F_0M$ such that $F\psi([a])=[F_0\psi(a)]$ and $F\psi([b])=[F_0\psi(b)]$. We obtain
\begin{align*}
F\psi([(a+b)]) \underset{\cref{Fromanpsi-4}}{=}&~ F\psi([a])+F\psi([b]) \\
\underset{\phantom{\cref{Fromanpsi-4}}}{=}&~ [F_0\psi(a)]+[F_0\psi(b)] \\
\underset{\cref{OP1}}{=}&~ [(F_0\psi(a)+F_0\psi(b))] \\ 
\underset{\cref{F0-id-4}}{=}&~ [F_0\psi((a+b))].
\end{align*}
\item Let $a,b \in F_0M$ such that $F\psi([a])=[F_0\psi(a)]$ and $F\psi([b])=[F_0\psi(b)]$. We obtain 
\[F\psi([(ab)]) \underset{\cref{Fromanpsi-5}}{=} F\psi([a])F\psi([b]) \underset{\phantom{\cref{Fromanpsi-5}}}{=} [F_0\psi(a)][F_0\psi(b)] \underset{\cref{OP2}}{=} [(F_0\psi(a)F_0\psi(b))] \underset{\cref{F0-id-5}}{=} [F_0\psi((ab))].\]
\item Let $a \in F_0M$ such that $F\psi([a])=[F_0\psi(a)]$. We obtain 
\[F\psi([f(a)]) \underset{\cref{Fromanpsi-6}}{=} \mathbf{f}(F\psi([a])) \underset{\phantom{\cref{Fromanpsi-5}}}{=} \mathbf{f}([F_0\psi(a)]) \underset{\cref{OP5}}{=} [f(F_0\psi(a))] \underset{\cref{F0-id-6}}{=} [F_0\psi(f(a))]. \qedhere\]
\end{itemize}
\end{proof}
We will now compute the monad on $\mathsf{CMon}$ with underlying functor $F=U \circ \mathcal{F}$ obtained from the adjunction $\mathcal{F} \dashv U$. We follow the usual construction of a monad from an adjunction. See VI.1 in \cite{MAC}.

The unit $u_M\colon M \rightarrow FM$ of the monad $F$ coincides with the unit of the adjunction $\mathcal{F} \dashv U$. Recall that from \cref{def:unit}, we have
\[
u_M(m)=[x_m]
\]
for every $m \in M$.

We will now compute the multiplication of the monad $F$. For this, we first need the counit $\epsilon_{(R,\mathbf{f})}:F((R,+,0)) \rightarrow (R,\mathbf{f})$ of the adjunction $\mathcal{F} \dashv U$. The following follows from the usual construction.
\begin{proposition} \label{prop:counit}
Let $(R,\mathbf{f})$ be a commutative rig with a self-map.

Let $\epsilon\colon\mathcal{F} \circ U \rightarrow \mathrm{id}_{\mathsf{CRig}^{\circlearrowleft}}$ be the counit of the left adjoint $\mathcal{F}\colon \mathsf{CMon} \rightarrow \mathsf{CRig}^{\circlearrowleft}$ to $U\colon \mathsf{CRig}^{\circlearrowleft} \rightarrow \mathsf{CMon}$ obtained from  \cref{adjoint-initial} and let $(R,\mathbf{f}) \in \mathsf{CRig}^{\circlearrowleft}$. The morphism of commutative rigs with a self-map $\epsilon_{(R,\mathbf{f})}\colon(F(R,+,0),\mathbf{g}) \longrightarrow (R,\mathbf{f})$ is the unique morphism of commutative rigs with a self-map such that the diagram
\begin{align} \label{diag:counit}
\begin{tikzcd}[ampersand replacement=\&]
{(R,+,0)} \arrow[rr, "{u_{(R,+,0)}}"] \arrow[rrd, equal] \&  \& {F(R,+,0)} \arrow[d, "{U(\epsilon_{(R,f)})}"] \\
                                                                     \&  \& {(R,+,0)}                                    
\end{tikzcd}
\end{align}
commutes.
\end{proposition}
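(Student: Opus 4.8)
The plan is to extract the counit directly from the universal property in \cref{adjoint-initial}, with no explicit computation: the point is simply that the component of the counit at $(R,\mathbf{f})$ is the unique comparison morphism out of the relevant initial object, applied to one canonically chosen object of a comma category. Concretely, I would set $M=(R,+,0)=U(R,\mathbf{f})$ and observe that the pair $\big((R,\mathbf{f}),\,\mathrm{id}_{(R,+,0)}\colon M \to U(R,\mathbf{f})\big)$ is a legitimate object of the comma category $M/U$, since $\mathrm{id}_{(R,+,0)}$ is a monoid homomorphism and $U(R,\mathbf{f})=(R,+,0)=M$. By \cref{adjoint-initial} the pair $\big((F(R,+,0),\mathbf{g}),\,u_{(R,+,0)}\big)=(\mathcal{F}M,u_M)$ is initial in $M/U$, so there is exactly one morphism of commutative rigs with a self-map $\epsilon_{(R,\mathbf{f})}\colon (F(R,+,0),\mathbf{g}) \to (R,\mathbf{f})$ realizing a morphism in $M/U$ from this initial object to $\big((R,\mathbf{f}),\,\mathrm{id}_{(R,+,0)}\big)$. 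Unfolding the definition of a morphism in $M/U$, this is precisely the condition $U(\epsilon_{(R,\mathbf{f})})\circ u_{(R,+,0)}=\mathrm{id}_{(R,+,0)}$, i.e. the commutativity of \cref{diag:counit}; this one step yields both existence and uniqueness of a morphism of commutative rigs with a self-map making \cref{diag:counit} commute.

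It then remains to identify this unique morphism with the actual counit of $\mathcal{F}\dashv U$. This is the standard passage from a family of universal arrows to an adjunction (\cite[IV.1, Theorem~2]{MAC}, already invoked for \cref{def-on-morphisms}): when the left adjoint is built so that each $(\mathcal{F}M,u_M)$ is universal from $M$ to $U$, the counit $\epsilon_D\colon \mathcal{F}UD \to D$ is defined to be the unique morphism whose image under $U$ left-splits $u_{UD}$, that is $U(\epsilon_D)\circ u_{UD}=\mathrm{id}_{UD}$. Specializing $D=(R,\mathbf{f})$, so that $UD=(R,+,0)$, recovers exactly the morphism isolated in the previous paragraph, which finishes the argument.

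I do not anticipate a genuine obstacle, since the whole proof is formal: no axiom of a deriving transformation or rig identity is touched. The only care needed is bookkeeping — checking that $\mathrm{id}_{(R,+,0)}$ is a valid object datum of $M/U$, so that \cref{adjoint-initial} applies, and matching the comma-category triangle with the counit triangle \cref{diag:counit}. Once these identifications are in place, initiality together with the cited construction of the counit delivers the statement.
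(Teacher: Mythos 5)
Your proposal is correct and is essentially the paper's own argument made explicit: the paper states that \cref{prop:counit} ``follows from the usual construction'' of an adjunction from universal arrows (citing \cite{MAC}), and you simply unfold that construction — the object $\big((R,\mathbf{f}),\mathrm{id}_{(R,+,0)}\big)$ of the comma category $(R,+,0)/U$, initiality from \cref{adjoint-initial}, and the identification of the resulting unique morphism with the counit component. No discrepancy to report.
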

\begin{remark}
The function $\epsilon_{(R,\mathbf{f})}\colon F(R,+,0) \rightarrow (R,\mathbf{f})$ in \cref{prop:counit} satisfies:
\begin{enumerate}
\item $\epsilon_{(R,\mathbf{f})}([0])=0$; \label{eps-id-1}
\item $\epsilon_{(R,\mathbf{f})}([1])=1$; \label{eps-id-2}
\item for every $r \in R$, we have $\epsilon_{(R,\mathbf{f})}([x_r])=r$; \label{eps-id-3}
\item for all $[a],[b] \in F(R,+,0)$, we have $\epsilon_{(R,\mathbf{f})}([(a+b)])=\epsilon_{(R,\mathbf{f})}([a])+\epsilon_{(R,\mathbf{f})}([b])$; \label{eps-id-4}
\item for all $[a],[b] \in F(R,+,0)$, we have $\epsilon_{(R,\mathbf{f})}([(ab])=\epsilon_{(R,\mathbf{f})}([a])\epsilon_{(R,\mathbf{f})}([b])$; \label{eps-id-5}
\item for every $[a] \in F(R,+,0)$, we have $\epsilon_{(R,\mathbf{f})}([g(a)])=\mathbf{f}(\epsilon_{(R,f)}([a]))$. \label{eps-id-6}
\end{enumerate}
The identities \cref{eps-id-1,eps-id-2,eps-id-4,eps-id-5,eps-id-6} follow from $\epsilon_{(R,\mathbf{f})}$ being a morphism of commutative rigs with a self-map. The identity \cref{eps-id-3} follows from \cref{diag:counit}.
\end{remark}
Recall that the multiplication $m\colon F \circ F=U \circ \mathcal{F} \circ U \circ \mathcal{F} \rightarrow F=U \circ \mathcal{F}$ of the monad $F=U \circ \mathcal{F}$ is the homomorphism of commutative monoids  given by
\begin{equation}
m_{M}=U(\epsilon_{\mathcal{F}M}):FFM \rightarrow FM. \label{formula-mult}
\end{equation}
\begin{proposition}
The function $m_M\colon FFM \rightarrow FM$ satisfies:
\begin{enumerate}
\item $m_M([0])=[0]$; \label{m-1}
\item $m_M([1])=[1]$; \label{m-2}
\item for every $[a] \in FM$, we have $m_M([y_{[a]}])=[a]$; \label{m-3}
\item for all $[a],[b] \in FFM$, we have $m_M([(a+b)])=m_M([a])+m_M([b])$; \label{m-4}
\item for all $[a],[b] \in FFM$, we have $m_M([(ab)])=m_M([a])m_M([b])$; \label{m-5}
\item for every $[a] \in FFM$, we have $m_M([g(a)])=\mathbf{f}(m_M([a]))$. \label{m-6}
\end{enumerate}
\end{proposition}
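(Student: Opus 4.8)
The plan is to deduce all six identities directly from the definition $m_M=U(\epsilon_{\mathcal{F}M})$ recorded in \cref{formula-mult}, by specializing the identities \cref{eps-id-1,eps-id-2,eps-id-3,eps-id-4,eps-id-5,eps-id-6} satisfied by the counit $\epsilon_{(R,\mathbf{f})}$ to the particular commutative rig with a self-map $(R,\mathbf{f})=\mathcal{F}M=(FM,\mathbf{f})$. Since $m_M$ is by definition the underlying monoid homomorphism of the counit component $\epsilon_{\mathcal{F}M}\colon(FFM,\mathbf{g}) \rightarrow (FM,\mathbf{f})$, no fresh computation is needed: every claimed equation is one of the counit identities read in this special case, and the only genuine task is to translate the generic notation into the notation for the two nested copies of $F$.

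First I would set up the notational dictionary. Applying the free construction to the underlying monoid of $\mathcal{F}M$ gives $F(FM,+,0)=FFM$, whose generators we write $y_{[a]}$ for $[a]\in FM$ and whose self-map symbol we write $g$, following the convention fixed earlier for iterated applications of $F$. The additive and multiplicative units $0$ and $1$ of the rig $R=FM$ are, by \cref{OP3} and \cref{OP4}, the elements $[0]$ and $[1]$ of $FM$, and the target self-map $\mathbf{f}$ of $\mathcal{F}M$ acts by $\mathbf{f}([a])=[f(a)]$ according to \cref{OP5}. With this dictionary fixed, the matching is immediate: identities \cref{m-1} and \cref{m-2} are \cref{eps-id-1} and \cref{eps-id-2} read through $0=[0]$ and $1=[1]$ in $FM$; identity \cref{m-3} is \cref{eps-id-3} with $r=[a]\in R=FM$ and the generator $x_r$ rewritten as $y_{[a]}$; identities \cref{m-4} and \cref{m-5} are exactly \cref{eps-id-4} and \cref{eps-id-5}, expressing that $\epsilon_{\mathcal{F}M}$ preserves sums and products; and identity \cref{m-6} is \cref{eps-id-6}, in which the target self-map is the $\mathbf{f}$ carried by $\mathcal{F}M$.

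I expect essentially no obstacle here: the mathematical content is already contained in the universal property of the counit established in \cref{prop:counit}, so the argument is a bookkeeping exercise. The one point deserving care is the notational shift between the inner and outer copies of $F$ --- writing $y_{[a]}$ rather than $x_{[a]}$ for the generators of $FFM$ and $g$ rather than $f$ for its self-map symbol --- so that each generic counit identity is paired with the correct target statement for $m_M$. One could instead give a self-contained induction on $F_0FM$ starting from the defining diagram \cref{diag:counit}, but specializing the already-proved counit identities is the cleaner and shorter route, and that is the one I would take.
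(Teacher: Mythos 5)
Your proposal is correct and follows exactly the paper's route: the paper likewise proves this proposition by combining \cref{formula-mult} with the counit identities \cref{eps-id-1,eps-id-2,eps-id-3,eps-id-4,eps-id-5,eps-id-6} specialized to $(R,\mathbf{f})=\mathcal{F}M$. Your additional care with the notational dictionary ($y_{[a]}$ versus $x_r$, $g$ versus $f$, and $0=[0]$, $1=[1]$ via \cref{OP3,OP4,OP5}) merely makes explicit what the paper leaves implicit.
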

\begin{proof}
Follows from \cref{formula-mult} and \cref{eps-id-1,eps-id-2,eps-id-3,eps-id-4,eps-id-5,eps-id-6}.
\end{proof}
\begin{definition}
Let $M$ be a commutative monoid, we define the function 
\[m_M^0\colon F_0FM \rightarrow FM\]
by the formula 
\begin{equation} \label{formula-m0}
m_M^0(a)=m_M([a])
\end{equation}
for every $a \in F_0FM$.
\end{definition}
\begin{remark}
The following identities follow from \cref{formula-m0} and \cref{m-1,m-2,m-3,m-4,m-5,m-6}:
\begin{enumerate}
\item $m_M^0(0)=[0]$; \label{m0-1}
\item $m_M^0(1)=[1]$; \label{m0-2}
\item for every $[a] \in FM$, we have $m_M^0(y_{[a]})=[a]$; \label{m0-3}
\item for all $a,b \in F_0FM$, we have $m_M^0((a+b))=m_M^0(a)+m_M^0(b)$; \label{m0-4}
\item for all $a,b \in F_0FM$, we have $m_M^0((ab))=m_M^0(a)m_M^0(b)$; \label{m0-5}
\item for every $a \in F_0FM$, we have $m_M^0(g(a))=\mathbf{f}(m_M^0(a))$. \label{m0-6}
\end{enumerate}
\end{remark}
\section{Making $F$ into an algebra modality} \label{FIVE}
\begin{definition}
Let $M$ be a commutative monoid. We define the function
\[
\eta_M\colon\mathbb{N} \rightarrow FM
\]
by the formula
\begin{equation} \label{unit-monoid}
\eta_M(k)=k[1]
\end{equation}
for every $k \in \mathbb{N}$ and we define the function
\[
\nabla_M\colon FM \otimes FM \rightarrow FM.
\]
as the unique monoid homomorphism such that
\begin{equation} \label{mult-monoid}
\nabla_M([a] \otimes [b])=[a][b]
\end{equation}
for all $[a],[b] \in FM$.
\end{definition}
\begin{proposition} \label{eta-nabla-nat}
Both $\eta$ and $\nabla$ are natural transformations in $\mathsf{CMon}$.
\end{proposition}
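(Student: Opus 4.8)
The plan is to verify the two naturality squares directly, reducing each to the defining identities for $F\psi$ recorded in \cref{F-on-morphism-identities}. Throughout I would use that $F\psi = U(\mathcal{F}\psi)$ is a rig homomorphism, so it respects both the additive and the multiplicative structure of $FM$.

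For $\eta$, the source is the constant functor at the monoidal unit $\mathbb{N}$, so naturality with respect to a monoid homomorphism $\psi\colon M \rightarrow N$ amounts to the single equation $F\psi \circ \eta_M = \eta_N$. I would evaluate the left-hand side on an arbitrary $k \in \mathbb{N}$: since $\eta_M(k) = k[1]$ and $F\psi$ is a monoid homomorphism, it preserves sums and the additive unit, so $F\psi(k[1]) = k \cdot F\psi([1])$. Then \cref{Fromanpsi-2} gives $F\psi([1]) = [1]$, whence $F\psi(\eta_M(k)) = k[1] = \eta_N(k)$, as required.

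For $\nabla$, the source functor is $F(-) \otimes F(-)$, acting on $\psi$ by $F\psi \otimes F\psi$, so naturality is the equation $F\psi \circ \nabla_M = \nabla_N \circ (F\psi \otimes F\psi)$ between monoid homomorphisms $FM \otimes FM \rightarrow FN$. By the pure-tensor technique recalled in the conventions, it suffices to check this on a pure tensor $[a] \otimes [b]$. On the one hand, $F\psi(\nabla_M([a] \otimes [b])) = F\psi([a][b]) = F\psi([a])\,F\psi([b])$, using the defining formula for $\nabla_M$ together with the multiplicativity identity \cref{Fromanpsi-5} (recalling $[a][b] = [(ab)]$ by \cref{OP2}). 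On the other hand, $\nabla_N\big((F\psi \otimes F\psi)([a] \otimes [b])\big) = \nabla_N\big(F\psi([a]) \otimes F\psi([b])\big) = F\psi([a])\,F\psi([b])$ by the defining formula for $\nabla_N$. The two sides agree on every pure tensor, which settles the case.

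Neither computation presents a genuine obstacle; the argument is essentially bookkeeping against the identities established earlier. The only points requiring a little care are to invoke that $F\psi$ preserves the additive structure (for $\eta$) and the multiplicative structure (for $\nabla$) rather than conflating the two, and to phrase the $\nabla$ verification through the pure-tensor reduction so that one only ever computes on generators.
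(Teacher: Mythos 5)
Your proof is correct and matches the paper's own argument essentially step for step: the paper likewise verifies $F\psi \circ \eta_M = \eta_N$ by evaluating on $k \in \mathbb{N}$ and invoking that $F\psi$ is a rig homomorphism, and checks the $\nabla$ square on pure tensors via the multiplicativity of $F\psi$. Your only divergence is cosmetic---you unpack ``$F\psi$ is a rig hom.''\ into additivity plus \cref{Fromanpsi-2} for the $\eta$ case, where the paper cites the rig-homomorphism property in one stroke.
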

\begin{proof}
We first prove the naturality of $\eta$. Let $f\colon M \rightarrow N$ be a homomorphism of commutative monoids. We have:
\[
(Ff \circ \eta_M)(k)\underset{\cref{unit-monoid}}{=}Ff(k[1])
\underset{Ff \text{ is a rig hom.}}{=}k[1]
\underset{\cref{unit-monoid}}{=}\eta_N(k).
\]
We now prove the naturality of $\nabla$. Let $\psi\colon M \rightarrow N$ be a homomorphism of commutative monoids and let $[a],[b] \in FM$. We have:

\sbox{\eqjustbox}{$\scriptscriptstyle F\psi$ \text{\scriptsize is a rig hom.}} 

\begin{align*}
\Big(\nabla_N \circ (F\psi \otimes F\psi)\Big)([a] \otimes [b]) \eqjust{} &~\nabla_N(F\psi([a]) \otimes F\psi([b])) \\
\eqjust{\cref{mult-monoid}}&~F\psi([a])F\psi([b]) \\
\underset{F\psi \text{ is a rig hom.}}{=}&~F\psi([a][b]) \\
\eqjust{\cref{mult-monoid}}&~F\psi(\nabla_M([a] \otimes [b])) \\
\eqjust{}&~\Big(F\psi \circ \nabla_M\Big)([a] \otimes [b]). \qedhere
\end{align*}
\end{proof}
Since for every commutative monoid $M$, $FM$ is a commutative rig, we immediately obtain that:
\begin{proposition} \label{FM-is-com-mon}
For every commutative monoid $M$, the diagrams
\[
\begin{tikzcd}
FM \otimes FM \otimes FM \arrow[d, "\mathsf{id}_{FM} \otimes \nabla_M"'] \arrow[rr, "\nabla_M \otimes \mathsf{id}_{FM}"] &  & FM \otimes FM \arrow[d, "\nabla_M"] &  & FM \arrow[d, "\mathsf{id}_{FM} \otimes \eta_M"'] \arrow[rrd, equal] &  &    \\
FM \otimes FM \arrow[rr, "\nabla_M"']                                                                              &  & FM,                                  &  & FM \otimes FM \arrow[rr, "\nabla_M"']                                             &  & FM, \\
FM \otimes FM \arrow[rrd, "\nabla_M"] \arrow[d, "{\sigma_{FM,FM}}"']                                               &  &                                     &  &                                                                                   &  &    \\
FM \otimes FM \arrow[rr, "\nabla_M"']                                                                              &  & FM                                  &  &                                                                                   &  &   
\end{tikzcd}
\]
commute.
\end{proposition}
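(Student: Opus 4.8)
The plan is to recognize the three diagrams as exactly the axioms making $(FM,\nabla_M,\eta_M)$ a commutative monoid in $(\mathsf{CMon},\otimes,\mathbb{N})$, and to verify each by checking the relevant identity on pure tensors. This suffices because every tensor product of commutative monoids is generated by pure tensors, as recalled in the conventions, and all the maps involved are monoid homomorphisms. In each case the identity will collapse, via the defining formula \cref{mult-monoid} for $\nabla_M$, to one of the commutative rig axioms for $FM$ already established in \cref{FM-is-a-rig}.

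For the associativity square (top left), I would evaluate the two composites on a pure tensor $[a] \otimes [b] \otimes [c] \in FM \otimes FM \otimes FM$. Applying \cref{mult-monoid} twice, the composite $\nabla_M \circ (\nabla_M \otimes \mathsf{id}_{FM})$ returns $([a][b])[c]$, while $\nabla_M \circ (\mathsf{id}_{FM} \otimes \nabla_M)$ returns $[a]([b][c])$; these coincide by the associativity of multiplication in the rig $FM$.

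For the commutativity triangle (bottom), evaluating $\nabla_M \circ \sigma_{FM,FM}$ and $\nabla_M$ on $[a] \otimes [b]$ yields $[b][a]$ and $[a][b]$ respectively, which agree by commutativity of multiplication in $FM$.

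For the unit triangle (top right), the only point requiring care is the strict right unitor $FM \otimes \mathbb{N} = FM$, under which a pure tensor $[a] \otimes k$ corresponds to $k[a]$. Unwinding $\eta_M$ via \cref{unit-monoid} and using that $\nabla_M$ is a monoid homomorphism, one computes $\nabla_M([a] \otimes \eta_M(k)) = \nabla_M(k([a] \otimes [1])) = k([a][1]) = k[a]$, where the last step is the unit law of the rig $FM$; hence the composite is the identity on $FM$. I expect this unitor identification to be the only (minor) subtlety, the other two diagrams being direct transcriptions of the associativity and commutativity of multiplication in $FM$.
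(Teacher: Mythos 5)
Your proof is correct and takes exactly the approach the paper intends: the paper states this proposition without proof, remarking only that it follows immediately from $FM$ being a commutative rig (\cref{FM-is-a-rig}), and your verification on pure tensors via \cref{mult-monoid}, including the careful handling of the strict unitor and \cref{unit-monoid} in the unit triangle, simply makes that implicit argument explicit.
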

We still have to check a last diagram.
\begin{proposition}
For every commutative monoid $M$, the diagram
\begin{align} \label{diag:algebra-modality}
\begin{tikzcd}[ampersand replacement=\&]
FFM \otimes FFM \arrow[d, "m_M \otimes m_M"'] \arrow[rr, "\nabla_{FM}"] \&  \& FFM \arrow[d, "m_M"] \\
FM \otimes FM \arrow[rr, "\nabla_M"']                                   \&  \& FM                  
\end{tikzcd}
\end{align}
commute
\end{proposition}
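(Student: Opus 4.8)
The plan is to use the pure-tensor technique described in the introduction. Both composites in the diagram are monoid homomorphisms out of the tensor product $FFM \otimes FFM$: the map $\nabla_{FM}$ is a monoid homomorphism by definition, $m_M = U(\epsilon_{\mathcal{F}M})$ is a monoid homomorphism since it underlies a rig homomorphism, and $\nabla_M$ together with $m_M \otimes m_M$ are likewise monoid homomorphisms. Hence, to prove that the diagram commutes it suffices to verify the equality
\[
(m_M \circ \nabla_{FM})([a] \otimes [b]) = (\nabla_M \circ (m_M \otimes m_M))([a] \otimes [b])
\]
on an arbitrary pure tensor $[a] \otimes [b]$ with $[a],[b] \in FFM$.

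For the left-hand side, first apply the defining formula for $\nabla_{FM}$ from \cref{mult-monoid} to rewrite $\nabla_{FM}([a] \otimes [b]) = [a][b]$, and then use the multiplicativity of $m_M$ recorded in \cref{m-5} to obtain $m_M([a][b]) = m_M([a]) m_M([b])$. For the right-hand side, unfolding $m_M \otimes m_M$ gives $\nabla_M(m_M([a]) \otimes m_M([b]))$, which by \cref{mult-monoid} again equals $m_M([a]) m_M([b])$. The two sides therefore coincide on every pure tensor, and the diagram commutes.

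There is no real obstacle here: the statement is immediate once one observes that $m_M$ is multiplicative and that $\nabla$ implements the rig multiplication. The only point requiring any care is to confirm at the outset that both legs are genuinely monoid homomorphisms, so that checking on pure tensors is legitimate; this is where the previously established facts that $\nabla$ is a natural transformation in $\mathsf{CMon}$ (\cref{eta-nabla-nat}) and that $m_M$ underlies a rig homomorphism are invoked.
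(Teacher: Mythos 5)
Your proof is correct and follows essentially the same route as the paper: both check the two composites on pure tensors, rewrite the left leg via \cref{mult-monoid} and the multiplicativity of $m_M$ from \cref{m-5}, and conclude since pure tensors generate $FFM \otimes FFM$. The only cosmetic difference is that the paper inserts the intermediate rewriting $[a][b]=[(ab)]$ via \cref{OP2} before invoking \cref{m-5}, a step your argument absorbs implicitly.
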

\begin{proof}
Let $[a],[b] \in FFM$. We have:
\begin{align*}
m_M\Big(\nabla_{FM}([a] \otimes [b])\Big)\underset{\cref{mult-monoid}}{=}&~m_M([a][b]) \\
\underset{\cref{OP2}}{=}&~m_M([(ab)]) \\
\underset{\cref{m-5}}{=}&~m_M([a])m_M([b]) \\
\underset{\cref{mult-monoid}}{=}&~\nabla_M\Big(m_M([a]) \otimes m_M([b])\Big) \\
=&~\nabla_M\Big((m_M \otimes m_M)([a] \otimes [b])\Big). \qedhere
\end{align*}
\end{proof}
We have proved that $m_M \circ \nabla_{FM}$ and $\nabla_M \circ (m_M \otimes m_M)$ coincide on pure tensors. Since pure tensors generate $FFM \otimes FFM$, it follows that \cref{diag:algebra-modality} commutes.
\section{A countable family of deriving transformations on $F$} \label{SIX}
We want to define for every $n \in \mathbb{N}$ a natural transformation
\[
{}_{n}\mathsf{d}_{M}\colon FM \rightarrow FM \otimes M
\]
which satisfies the axioms for a deriving transformation.

It suffices to define for every commutative monoid $M$ a function
\[
{}_{n}\mathsf{d}_M^0\colon F_0M \rightarrow FM \otimes M,
\]
then to show that ${}_{n}\mathsf{d}^0_M$ is invariant on equivalence classes, and we will set ${}_{n}\mathsf{d}_M$ by the formula
\[
{}_{n}\mathsf{d}_M([a])={}_{n}\mathsf{d}_M^0(a)
\]
for every $[a] \in FM$.

We will define ${}_{n}\mathsf{d}_M^0$ by induction on $F_0M$. 
\begin{definition} \label{def:d0}
Let $M$ be a commutative monoid. We define a function
\[
{}_{n}\mathsf{d}_M^0\colon F_0M \rightarrow FM \otimes M
\]
by induction on $F_0M$ as follows:
\begin{enumerate}
\item ${}_{n}\mathsf{d}_M^0(0)=0$; \label{d0-1}
\item ${}_{n}\mathsf{d}_M^0(1)=0$; \label{d0-2}
\item for any $m \in M$, ${}_{n}\mathsf{d}_M^0(x_m)=[1] \otimes m$; \label{d0-3}
\item for all $a,b \in F_0M$, \label{d0-4}
\[{}_{n}\mathsf{d}_M^0((ab))=(\nabla_M \otimes \mathsf{id}_M)([a] \otimes {}_{n}\mathsf{d}_M^0(b))+\Big((\nabla_M \otimes \mathsf{id}_M) \circ (\mathsf{id}_{FM} \otimes \sigma_{M,FM})\Big)({}_{n}\mathsf{d}_M^0(a) \otimes [b]);\] 
\item for all $a,b \in F_0M$, ${}_{n}\mathsf{d}_M^0((a+b))={}_{n}\mathsf{d}_M^0(a)+{}_{n}\mathsf{d}_M^0(b)$; \label{d0-5}
\item for every $a \in F_0M$, ${}_{n}\mathsf{d}_M^0(f(a))=n\cdot {}_{n}\mathsf{d}^0_M(a)$. \label{d0-6}
\end{enumerate}
\end{definition}
Note that this definition is made precisely for all the axioms for a deriving transformation to be satisfied by ${}_{n}\mathsf{d}$. We will now prove that ${}_{n}\mathsf{d}_M^0$ preserves the equivalence relation $\sim$ on $F_0M$.
\begin{proposition} \label{prop:d0-preserves-sim}
Let $M$ be a commutative monoid. Let $a,b \in F_0M$. If $a \sim b$, then ${}_{n}\mathsf{d}^0_M(a)={}_{n}\mathsf{d}^0_M(b)$.
\end{proposition}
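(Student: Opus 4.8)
The plan is to argue by rule induction on the inductive definition of $\sim$ from \cref{def:preset}: I verify that each of the sixteen clauses \cref{ER1,ER2,ER3,ER4,ER5,ER6,ER7,ER8,ER9,ER10,ER11,ER12,ER13,ER14,ER15,ER16} preserves the value of ${}_{n}\mathsf{d}^0_M$, taking as induction hypothesis that ${}_{n}\mathsf{d}^0_M$ already agrees on the immediate subderivations. The one preliminary that streamlines everything is to observe that $FM \otimes M$ carries an action of the commutative rig $FM$: for $c \in FM$ and a pure tensor $u \otimes m$, set $c \triangleright (u \otimes m) := (cu) \otimes m$, extended additively. Because $FM$ is a commutative rig (\cref{FM-is-a-rig}), this action is associative, distributes over the addition of $FM \otimes M$, and has $[0]$ acting as the zero map. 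Rewriting clause \cref{d0-4} in this notation yields the symmetric product rule
\[
{}_{n}\mathsf{d}^0_M((ab)) = [a] \triangleright {}_{n}\mathsf{d}^0_M(b) + [b] \triangleright {}_{n}\mathsf{d}^0_M(a),
\]
where the symmetry results from the commutativity of multiplication in $FM$ absorbing the swap $\sigma_{M,FM}$ of \cref{d0-4}.

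With this in hand, most clauses are immediate. Reflexivity \cref{ER1}, symmetry \cref{ER12} and transitivity \cref{ER13} are formal; the congruences \cref{ER14} for $+$ and \cref{ER16} for $f$ follow from clauses \cref{d0-5} and \cref{d0-6} with the induction hypothesis; and the additive axioms \cref{ER2}, \cref{ER3}, \cref{ER4} reduce via \cref{d0-5} and the base value ${}_{n}\mathsf{d}^0_M(0)=0$ of \cref{d0-1} to the monoid identities of $FM \otimes M$. For \cref{ER10} one computes ${}_{n}\mathsf{d}^0_M(x_0) = [1] \otimes 0 = 0 = {}_{n}\mathsf{d}^0_M(0)$ using \cref{d0-3} and bilinearity of $\otimes$, and \cref{ER11} is the identity $[1] \otimes m + [1] \otimes n = [1] \otimes (m+n)$. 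The commutativity axiom \cref{ER7} is now trivial, since the symmetric product rule is invariant under exchanging $a$ and $b$; the unit axiom \cref{ER6} and the absorption axiom \cref{ER9} follow from ${}_{n}\mathsf{d}^0_M(1)=0$ (\cref{d0-2}) and from $[0] \triangleright (-)=0$ respectively. The multiplicative congruence \cref{ER15} uses the premise $a \sim b$ twice: it yields $[a]=[b]$ in $FM$ by definition of the quotient, and ${}_{n}\mathsf{d}^0_M(a)={}_{n}\mathsf{d}^0_M(b)$ by the induction hypothesis, so the product rule for $(ac)$ and $(bc)$ matches term by term.

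The main obstacle is the associativity axiom \cref{ER5} and the distributivity axiom \cref{ER8}, the only clauses where the product rule must be iterated. For \cref{ER5}, expanding both sides with the symmetric product rule produces
\[
{}_{n}\mathsf{d}^0_M(((ab)c)) = [a][b] \triangleright {}_{n}\mathsf{d}^0_M(c) + [a][c] \triangleright {}_{n}\mathsf{d}^0_M(b) + [b][c] \triangleright {}_{n}\mathsf{d}^0_M(a),
\]
together with the identical expression for ${}_{n}\mathsf{d}^0_M((a(bc)))$, once the associativity and commutativity of multiplication in $FM$ are used to identify the coefficients $[a][b]$, $[a][c]$, $[b][c]$. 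For \cref{ER8}, expanding $((a+b)c)$ with the product rule and \cref{d0-5}, and expanding $((ac)+(bc))$ with \cref{d0-5} and the product rule, both give
\[
[a] \triangleright {}_{n}\mathsf{d}^0_M(c) + [b] \triangleright {}_{n}\mathsf{d}^0_M(c) + [c] \triangleright {}_{n}\mathsf{d}^0_M(a) + [c] \triangleright {}_{n}\mathsf{d}^0_M(b),
\]
using that $\triangleright$ is bilinear. These two cases are routine but the most calculation-heavy, and the single action $\triangleright$ fixed at the start is exactly what keeps the reorganization of terms under control.
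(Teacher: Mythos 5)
Your proof is correct, and it follows the same skeleton as the paper's proof --- rule induction on the sixteen generating clauses of $\sim$ --- but it is organized around a different key device. The paper works at the level of composite morphisms $(\nabla_M \otimes \mathsf{id}_M)$, $(\mathsf{id}_{FM} \otimes \sigma_{M,FM})$, etc., and verifies auxiliary identities on pure tensors each time they are needed: its analogues of your ``commutativity absorbs $\sigma$'' observation are \cref{pure-d0-ab-1,pure-d0-ab-2} in case \cref{ER7} and \cref{assoc-mult-string} in case \cref{ER5}, and its cases \cref{ER5,ER8} run to a page each. Your single preliminary --- that $FM \otimes M$ is a module over the commutative rig $FM$ via $\triangleright$, so that \cref{d0-4} becomes the symmetric Leibniz rule ${}_{n}\mathsf{d}^0_M((ab)) = [a] \triangleright {}_{n}\mathsf{d}^0_M(b) + [b] \triangleright {}_{n}\mathsf{d}^0_M(a)$ --- packages all of these checks at once and collapses the heavy cases \cref{ER5,ER6,ER7,ER8,ER9} to short rig computations, exactly parallel to the classical fact that a derivation valued in a module is controlled by the Leibniz rule; your treatment of \cref{ER15}, using both $[a]=[b]$ (from $a \sim b$ and the definition of the quotient) and the induction hypothesis, matches what the paper does implicitly. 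Two small points you should make explicit. First, $\triangleright$ is not defined by ``extending additively'' from pure tensors --- representations of an element of $FM \otimes M$ as a sum of pure tensors are not unique --- but by applying the universal property of the tensor product to the monoid bihomomorphism $(u,m) \mapsto (cu) \otimes m$; this is routine but it is the actual justification. Second, case \cref{ER6} needs unitality $[1] \triangleright x = x$ (on pure tensors this is \cref{OP4}), which is missing from your list of properties of the action even though you use it. As for what each route buys: the paper's longer, morphism-level computations reappear verbatim in the subsequent verifications of the deriving-transformation axioms (\cref{product,chain-d0,interchange-preterms}), which are stated in exactly that composite form, whereas your module action gives a much shorter proof here at the cost of having to translate back to $(\nabla_M \otimes \mathsf{id}_M) \circ (\mathsf{id}_{FM} \otimes \sigma_{M,FM})$ when those axioms are checked in their official formulation.
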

\begin{proof}
By induction on $\sim$. We must check cases \cref{ER1,ER2,ER3,ER4,ER5,ER6,ER7,ER8,ER9,ER10,ER11,ER12,ER13,ER14,ER15,ER16}.
\begin{itemize}
\item Case \cref{ER1}: ${}_n\mathsf{d}_M^0(a)={}_n\mathsf{d}_M^0(a)$.
\item Case \cref{ER2}:
\begin{align*}
{}_n\mathsf{d}_M^0(((a+b)+c))\underset{\cref{d0-5}}{=}&~{}_n\mathsf{d}_M^0((a+b))+{}_n\mathsf{d}_M^0(c) \\
\underset{\cref{d0-5}}{=}&~({}_n\mathsf{d}_M^0(a)+{}_n\mathsf{d}_M^0(b))+{}_n\mathsf{d}_M^0(c) \\
=&~{}_n\mathsf{d}_M^0(a)+({}_n\mathsf{d}_M^0(b)+{}_n\mathsf{d}_M^0(c)) \\
\underset{\cref{d0-5}}{=}&~{}_n\mathsf{d}_M^0(a)+({}_n\mathsf{d}_M^0((b+c)) \\
\underset{\cref{d0-5}}{=}&~{}_n\mathsf{d}_M^0((a+(b+c))).
\end{align*}
\item Case \cref{ER3}:
\begin{align*}
{}_n\mathsf{d}_M^0((a+0))\underset{\cref{d0-5}}{=}&~{}_n\mathsf{d}_M^0(a)+{}_n\mathsf{d}_M^0(0) \\
\underset{\cref{d0-1}}{=}&~{}_n\mathsf{d}_M^0(a)+0 \\
\underset{\phantom{\cref{d0-5}}}{=}&~{}_n\mathsf{d}_M^0(a).
\end{align*}
\item Case \cref{ER4}:
\begin{align*}
{}_n\mathsf{d}_M^0((a+b))\underset{\cref{d0-5}}{=}&~{}_n\mathsf{d}_M^0(a)+{}_n\mathsf{d}_M^0(b) \\
\underset{\phantom{\cref{d0-5}}}{=}&~{}_n\mathsf{d}_M^0(b)+{}_n\mathsf{d}_M^0(a) \\
\underset{\cref{d0-5}}{=}&~{}_n\mathsf{d}_M^0((b+a)).
\end{align*}
\item Case \cref{ER5}: We have
\sbox{\eqjustbox}{$(\mathsf{id}_{FM} \otimes \sigma_{M,FM})$} 
\begin{align} \label{assoc-mult-first}
&~{}_n\mathsf{d}_M^0(((ab)c)) \notag \\
\eqjust{\cref{d0-4}}&~(\nabla_M \otimes \mathsf{id}_M)([(ab)] \otimes {}_{n}\mathsf{d}_M^0(c))+\Big((\nabla_M \otimes \mathsf{id}_M) \circ (\mathsf{id}_{FM} \otimes \sigma_{M,FM})\Big)({}_{n}\mathsf{d}_M^0((ab)) \otimes [c]) \notag \\
\eqjust{\cref{d0-4}}&~(\nabla_M \otimes \mathsf{id}_M)([(ab)] \otimes {}_{n}\mathsf{d}_M^0(c))+\Big((\nabla_M \otimes \mathsf{id}_M) \circ (\mathsf{id}_{FM} \otimes \sigma_{M,FM})\Big) \notag \\ 
&\qquad\Bigg(\bigg((\nabla_M \otimes \mathsf{id}_M)([a] \otimes {}_{n}\mathsf{d}_M^0(b))+\Big((\nabla_M \otimes \mathsf{id}_M) \circ (\mathsf{id}_{FM} \otimes \sigma_{M,FM})\Big)({}_{n}\mathsf{d}_M^0(a) \otimes [b])\bigg)\otimes [c]\Bigg) \notag \\
\eqjust{\cref{mult-monoid}}&~(\nabla_M \otimes \mathsf{id}_M)(\nabla_M([a] \otimes [b]) \otimes {}_{n}\mathsf{d}_M^0(c))+\Big((\nabla_M \otimes \mathsf{id}_M) \circ (\mathsf{id}_{FM} \otimes \sigma_{M,FM})\Big) \notag \\
&\qquad\Bigg(\bigg((\nabla_M \otimes \mathsf{id}_M)([a] \otimes {}_{n}\mathsf{d}_M^0(b))+\Big((\nabla_M \otimes \mathsf{id}_M) \circ (\mathsf{id}_{FM} \otimes \sigma_{M,FM})\Big)({}_{n}\mathsf{d}_M^0(a) \otimes [b])\bigg)\otimes [c]\Bigg) \notag \\
\eqjust{\substack{\otimes \text{ is biadditive} \\ \text{on elements}}}&~(\nabla_M \otimes \mathsf{id}_M)(\nabla_M([a] \otimes [b]) \otimes {}_{n}\mathsf{d}_M^0(c))+\Big((\nabla_M \otimes \mathsf{id}_M) \circ (\mathsf{id}_{FM} \otimes \sigma_{M,FM})\Big) \notag \\
&\qquad\bigg((\nabla_M \otimes \mathsf{id}_M)([a] \otimes {}_{n}\mathsf{d}_M^0(b)) \otimes [c]+\Big((\nabla_M \otimes \mathsf{id}_M) \circ (\mathsf{id}_{FM} \otimes \sigma_{M,FM})\Big) \notag \\
&\qquad({}_{n}\mathsf{d}_M^0(a) \otimes [b]) \otimes [c]\bigg) \notag \\
\eqjust{\substack{(\nabla_M \otimes \mathsf{id}_M)\,\circ \\ (\mathsf{id}_{FM} \otimes \sigma_{M,FM}) \\ \text{is a mon.~hom.}}}&~(\nabla_M \otimes \mathsf{id}_M)(\nabla_M([a] \otimes [b]) \otimes {}_{n}\mathsf{d}_M^0(c)) \notag \\
&\qquad+\Big((\nabla_M \otimes \mathsf{id}_M) \circ (\mathsf{id}_{FM} \otimes \sigma_{M,FM})\Big)\Big((\nabla_M \otimes \mathsf{id}_M)([a] \otimes {}_{n}\mathsf{d}_M^0(b))\otimes [c]\Big) \notag \\
&\qquad+\Big((\nabla_M \otimes \mathsf{id}_M) \circ (\mathsf{id}_{FM} \otimes \sigma_{M,FM})\Big)\bigg(\Big((\nabla_M \otimes \mathsf{id}_M) \circ (\mathsf{id}_{FM} \otimes \sigma_{M,FM})\Big)({}_{n}\mathsf{d}_M^0(a) \otimes [b]) \otimes [c]\bigg) \notag \\
\eqjust{\substack{\text{comm.~of }$+$}}&~\Big((\nabla_M \otimes \mathsf{id}_M) \circ (\mathsf{id}_{FM} \otimes \sigma_{M,FM})\Big)\bigg(\Big((\nabla_M \otimes \mathsf{id}_M) \circ (\mathsf{id}_{FM} \otimes \sigma_{M,FM})\Big)({}_{n}\mathsf{d}_M^0(a) \otimes [b]) \otimes [c]\Big) \notag \\
&\qquad+\Big((\nabla_M \otimes \mathsf{id}_M) \circ (\mathsf{id}_{FM} \otimes \sigma_{M,FM})\Big)\Big((\nabla_M \otimes \mathsf{id}_M)([a] \otimes {}_{n}\mathsf{d}_M^0(b))\otimes [c]\Big) \notag \\
&\qquad+(\nabla_M \otimes \mathsf{id}_M)(\nabla_M([a] \otimes [b]) \otimes {}_{n}\mathsf{d}_M^0(c)) \notag \\
\eqjust{\substack{\text{prop.~of }\\ \otimes,~\circ \text{ and }\mathsf{id}}}&~\Big((\nabla_M \otimes \mathsf{id}_M) \circ (\mathsf{id}_{FM} \otimes \sigma_{M,FM})\Big) \notag \\
&\qquad\bigg(\Big((\nabla_M \otimes \mathsf{id}_M \otimes \mathsf{id}_{FM}) \circ (\mathsf{id}_{FM} \otimes \sigma_{M,FM} \otimes \mathsf{id}_{FM})\Big)({}_{n}\mathsf{d}_M^0(a) \otimes [b] \otimes [c])\bigg) \notag \\
&\qquad+\Big((\nabla_M \otimes \mathsf{id}_M) \circ (\mathsf{id}_{FM} \otimes \sigma_{M,FM})\circ (\nabla_M \otimes \mathsf{id}_M \otimes \mathsf{id}_{FM})\Big)([a] \otimes {}_{n}\mathsf{d}_M^0(b) \otimes [c]) \notag \\
&\qquad+\Big((\nabla_M \otimes \mathsf{id}_M)\circ(\nabla_M \otimes \mathsf{id}_{FM \otimes M})\Big)
([a] \otimes [b] \otimes {}_{n}\mathsf{d}_M^0(c)) \notag \\
\eqjust{\text{prop.~of }\circ}&~\Big((\nabla_M \otimes \mathsf{id}_M) \circ (\mathsf{id}_{FM} \otimes \sigma_{M,FM}) \circ
(\nabla_M \otimes \mathsf{id}_M \otimes \mathsf{id}_{FM}) \circ (\mathsf{id}_{FM} \otimes \sigma_{M,FM} \otimes \mathsf{id}_{FM})\Big) \notag \\
&\qquad({}_{n}\mathsf{d}_M^0(a) \otimes [b] \otimes [c]) \notag \\
&\qquad+\Big((\nabla_M \otimes \mathsf{id}_M) \circ (\mathsf{id}_{FM} \otimes \sigma_{M,FM})\circ (\nabla_M \otimes \mathsf{id}_M \otimes \mathsf{id}_{FM})\Big)([a] \otimes {}_{n}\mathsf{d}_M^0(b) \otimes [c]) \notag \\
&\qquad+\Big((\nabla_M \otimes \mathsf{id}_M)\circ(\nabla_M \otimes \mathsf{id}_{FM \otimes M})\Big)
([a] \otimes [b] \otimes {}_{n}\mathsf{d}_M^0(c)) \notag \\
\eqjust{\substack{\text{prop.~of }\\ \otimes,~\circ \text{ and }\mathsf{id}}}&~\Big((\nabla_M \otimes \mathsf{id}_M) \circ (\nabla_M \otimes \sigma_{M,FM}) \circ (\mathsf{id}_{FM} \otimes \sigma_{M,FM} \otimes \mathsf{id}_{FM})\Big)({}_{n}\mathsf{d}_M^0(a) \otimes [b] \otimes [c]) \notag \\
&\qquad+\Big((\nabla_M \otimes \mathsf{id}_M) \circ (\nabla_M \otimes \sigma_{M,FM})\Big)([a] \otimes {}_{n}\mathsf{d}_M^0(b) \otimes [c]) \notag \\
&\qquad+\Big((\nabla_M \otimes \mathsf{id}_M)\circ(\nabla_M \otimes \mathsf{id}_{FM \otimes M})\Big)
([a] \otimes [b] \otimes {}_{n}\mathsf{d}_M^0(c)) \notag \\
\eqjust{\substack{\text{prop.~of }\\ \otimes,~\circ \text{ and }\mathsf{id}}}&~\Big((\nabla_M \otimes \mathsf{id}_M) \circ (\nabla_M \otimes \mathsf{id}_{FM \otimes M}) \circ (\mathsf{id}_{FM} \otimes \mathsf{id}_{FM} \otimes \sigma_{M,FM}) \circ (\mathsf{id}_{FM} \otimes \sigma_{M,FM} \otimes \mathsf{id}_{FM})\Big) \notag\\
&\qquad({}_{n}\mathsf{d}_M^0(a) \otimes [b] \otimes [c]) \notag \\
&\qquad+\Big((\nabla_M \otimes \mathsf{id}_M) \circ (\nabla_M \otimes \sigma_{M,FM})\Big)([a] \otimes {}_{n}\mathsf{d}_M^0(b) \otimes [c]) \notag \\
&\qquad+\Big((\nabla_M \otimes \mathsf{id}_M)\circ(\nabla_M \otimes \mathsf{id}_{FM \otimes M})\Big)
([a] \otimes [b] \otimes {}_{n}\mathsf{d}_M^0(c)).
\end{align}
We also have
\sbox{\eqjustbox}{$\otimes \text{ is biadditive}$} 
\begin{align} \label{assoc-mult-second}
&~{}_n\mathsf{d}_M^0((a(bc)) \notag \\
\eqjust{\cref{d0-4}}&~(\nabla_M \otimes \mathsf{id}_M)([a] \otimes {}_{n}\mathsf{d}_M^0(bc))+\Big((\nabla_M \otimes \mathsf{id}_M) \circ (\mathsf{id}_{FM} \otimes \sigma_{M,FM})\Big)({}_{n}\mathsf{d}_M^0(a) \otimes [bc]) \notag \\
\eqjust{\cref{d0-4}}&~(\nabla_M \otimes \mathsf{id}_M)\Bigg([a] \otimes \bigg((\nabla_M \otimes \mathsf{id}_M)([b] \otimes {}_{n}\mathsf{d}_M^0(c))+\Big((\nabla_M \otimes \mathsf{id}_M) \circ (\mathsf{id}_{FM} \otimes \sigma_{M,FM})\Big)({}_{n}\mathsf{d}_M^0(b) \otimes [c])\bigg)\Bigg) \notag \\
&+\Big((\nabla_M \otimes \mathsf{id}_M) \circ (\mathsf{id}_{FM} \otimes \sigma_{M,FM})\Big)({}_{n}\mathsf{d}_M^0(a) \otimes [bc]) \notag \\
\eqjust{\cref{mult-monoid}}&~(\nabla_M \otimes \mathsf{id}_M)\Bigg([a] \otimes \bigg((\nabla_M \otimes \mathsf{id}_M)([b] \otimes {}_{n}\mathsf{d}_M^0(c))+\Big((\nabla_M \otimes \mathsf{id}_M) \circ (\mathsf{id}_{FM} \otimes \sigma_{M,FM})\Big)({}_{n}\mathsf{d}_M^0(b) \otimes [c])\bigg)\Bigg) \notag \\
&+\Big((\nabla_M \otimes \mathsf{id}_M) \circ (\mathsf{id}_{FM} \otimes \sigma_{M,FM})\Big)({}_{n}\mathsf{d}_M^0(a) \otimes \nabla_M([b] \otimes [c])) \notag \\
\eqjust{\substack{\otimes \text{ is biadditive} \\ \text{on elements}}}&~(\nabla_M \otimes \mathsf{id}_M)\Bigg([a] \otimes (\nabla_M \otimes \mathsf{id}_M)([b] \otimes {}_{n}\mathsf{d}_M^0(c))\notag \\
&+[a]\otimes\bigg(\Big((\nabla_M \otimes \mathsf{id}_M) \circ (\mathsf{id}_{FM} \otimes \sigma_{M,FM})\Big)({}_{n}\mathsf{d}_M^0(b) \otimes [c])\bigg)\Bigg) \notag \\
&+\Big((\nabla_M \otimes \mathsf{id}_M) \circ (\mathsf{id}_{FM} \otimes \sigma_{M,FM})\Big)({}_{n}\mathsf{d}_M^0(a) \otimes \nabla_M([b] \otimes [c])) \notag \\
\eqjust{\substack{\nabla_M \otimes \mathsf{id}_M \text{ is }\\ \text{a mon.~hom.}}}&~(\nabla_M \otimes \mathsf{id}_M)\Big([a] \otimes (\nabla_M \otimes \mathsf{id}_M)([b] \otimes {}_{n}\mathsf{d}_M^0(c))\Big) \notag \\
&+(\nabla_M \otimes \mathsf{id}_M)\Bigg([a]\otimes\bigg(\Big((\nabla_M \otimes \mathsf{id}_M) \circ (\mathsf{id}_{FM} \otimes \sigma_{M,FM})\Big)({}_{n}\mathsf{d}_M^0(b) \otimes [c])\bigg)\Bigg) \notag \\
&+\Big((\nabla_M \otimes \mathsf{id}_M) \circ (\mathsf{id}_{FM} \otimes \sigma_{M,FM})\Big)({}_{n}\mathsf{d}_M^0(a) \otimes \nabla_M([b] \otimes [c])) \notag \\
\eqjust{\text{comm.~of }+}&~\Big((\nabla_M \otimes \mathsf{id}_M) \circ (\mathsf{id}_{FM} \otimes \sigma_{M,FM})\Big)({}_{n}\mathsf{d}_M^0(a) \otimes \nabla_M([b] \otimes [c])) \notag \\
&+(\nabla_M \otimes \mathsf{id}_M)\Bigg([a]\otimes\bigg(\Big((\nabla_M \otimes \mathsf{id}_M) \circ (\mathsf{id}_{FM} \otimes \sigma_{M,FM})\Big)({}_{n}\mathsf{d}_M^0(b) \otimes [c])\bigg)\Bigg) \notag \\
&+(\nabla_M \otimes \mathsf{id}_M)\Big([a] \otimes \Big((\nabla_M \otimes \mathsf{id}_M)([b] \otimes {}_{n}\mathsf{d}_M^0(c))\Big)\Big) \notag \\
\eqjust{\substack{\text{prop.~of }\\ \otimes,~\circ \text{ and }\mathsf{id}}}&~\Big((\nabla_M \otimes \mathsf{id}_M) \circ (\mathsf{id}_{FM} \otimes \sigma_{M,FM})\Big)\Big((\mathsf{id}_{FM} \otimes \mathsf{id}_M \otimes \nabla_M)({}_{n}\mathsf{d}_M^0(a) \otimes [b] \otimes [c])\Big) \notag \\
&+(\nabla_M \otimes \mathsf{id}_M)\bigg(\Big((\mathsf{id}_{FM} \otimes \nabla_M \otimes \mathsf{id}_M) \circ (\mathsf{id}_{FM} \otimes \mathsf{id}_{FM} \otimes \sigma_{M,FM})\Big)([a] \otimes {}_{n}\mathsf{d}_M^0(b)\otimes c)\bigg) \notag \\
&+(\nabla_M \otimes \mathsf{id}_M)\bigg(\Big(\mathsf{id}_{FM} \otimes \nabla_M \otimes \mathsf{id}_M\Big)([a] \otimes [b] \otimes {}_{n}\mathsf{d}_M^0(c))\bigg) \notag \\
\eqjust{\text{prop.~of }\circ}&~\Big((\nabla_M \otimes \mathsf{id}_M) \circ (\mathsf{id}_{FM} \otimes \sigma_{M,FM}) \circ (\mathsf{id}_{FM} \otimes \mathsf{id}_M \otimes  \nabla_M)\Big)({}_{n}\mathsf{d}_M^0(a) \otimes [b] \otimes [c]) \notag \\
&+\Big((\nabla_M \otimes \mathsf{id}_M) \circ (\mathsf{id}_{FM} \otimes \nabla_M \otimes \mathsf{id}_M) \circ (\mathsf{id}_{FM} \otimes \mathsf{id}_{FM} \otimes \sigma_{M,FM})\Big)([a] \otimes {}_{n}\mathsf{d}_M^0(b)\otimes c) \notag \\
&+\Big((\nabla_M \otimes \mathsf{id}_M) \circ (\mathsf{id}_{FM} \otimes \nabla_M \otimes \mathsf{id}_M)\Big)([a] \otimes [b] \otimes {}_{n}\mathsf{d}_M^0(c)) \notag \\
\eqjust{\text{nat.~of }\sigma}&~\Big((\nabla_M \otimes \mathsf{id}_M)\circ(\mathsf{id}_{FM} \otimes \nabla_M \otimes  \mathsf{id}_M) \circ (\mathsf{id}_{FM} \otimes \sigma_{M,FM \otimes FM}) \Big)({}_{n}\mathsf{d}_M^0(a) \otimes [b] \otimes [c]) \notag \\
&+\Big((\nabla_M \otimes \mathsf{id}_M) \circ (\mathsf{id}_{FM} \otimes \nabla_M \otimes \mathsf{id}_M) \circ (\mathsf{id}_{FM} \otimes \mathsf{id}_{FM} \otimes \sigma_{M,FM})\Big)([a] \otimes {}_{n}\mathsf{d}_M^0(b)\otimes c) \notag \\
&+\Big((\nabla_M \otimes \mathsf{id}_M) \circ (\mathsf{id}_{FM} \otimes \nabla_M \otimes \mathsf{id}_M)\Big)([a] \otimes [b] \otimes {}_{n}\mathsf{d}_M^0(c)) \notag \\
\eqjust{\substack{\text{assoc.~in} \\ \text{Prop. }\ref{FM-is-com-mon}}}&~\Big((\nabla_M \otimes \mathsf{id}_M)\circ(\nabla_M \otimes \mathsf{id}_{FM} \otimes \mathsf{id}_M) \circ (\mathsf{id}_{FM} \otimes \sigma_{M,FM \otimes FM}) \Big)({}_{n}\mathsf{d}_M^0(a) \otimes [b] \otimes [c]) \notag \\
&+\Big((\nabla_M \otimes \mathsf{id}_M) \circ (\nabla_M \otimes \mathsf{id}_{FM} \otimes \mathsf{id}_M) \circ (\mathsf{id}_{FM} \otimes \mathsf{id}_{FM} \otimes \sigma_{M,FM})\Big)([a] \otimes {}_{n}\mathsf{d}_M^0(b)\otimes c) \notag \\
&+\Big((\nabla_M \otimes \mathsf{id}_M) \circ (\nabla_M \otimes \mathsf{id}_{FM} \otimes \mathsf{id}_M)\Big)([a] \otimes [b] \otimes {}_{n}\mathsf{d}_M^0(c)) \notag \\
\eqjust{\substack{\text{prop.~of }\\ \otimes,~\circ \text{ and }\mathsf{id}}}&~\Big((\nabla_M \otimes \mathsf{id}_M)\circ(\nabla_M \otimes \mathsf{id}_{FM \otimes M}) \circ (\mathsf{id}_{FM} \otimes \sigma_{M,FM \otimes FM}) \Big)({}_{n}\mathsf{d}_M^0(a) \otimes [b] \otimes [c]) \notag \\
&+\Big((\nabla_M \otimes \mathsf{id}_M) \circ (\nabla_M \otimes \sigma_{M,FM})\Big)([a] \otimes {}_{n}\mathsf{d}_M^0(b)\otimes c) \notag \\
&+\Big((\nabla_M \otimes \mathsf{id}_M) \circ (\nabla_M \otimes \mathsf{id}_{FM \otimes M})\Big)([a] \otimes [b] \otimes {}_{n}\mathsf{d}_M^0(c)).
\end{align}
We can check on pure tensors that
\begin{equation} \label{assoc-mult-string}
(\mathsf{id}_{FM} \otimes \sigma_{M,FM}) \circ (\sigma_{M,FM} \otimes \mathsf{id}_{FM})=\sigma_{M,FM \otimes FM}:
\end{equation}
\begin{align*}
&~\Big((\mathsf{id}_{FM} \otimes \sigma_{M,FM}) \circ (\sigma_{M,FM} \otimes \mathsf{id}_{FM})\Big)(m \otimes [a] \otimes [b]) \\
=&~(\mathsf{id}_{FM} \otimes \sigma_{M,FM})([a] \otimes m \otimes [b]) \\
=&~[a] \otimes [b] \otimes m \\
=&~\sigma_{M,FM \otimes FM}(m \otimes [a] \otimes [b]).
\end{align*}
Combining \cref{assoc-mult-first,assoc-mult-second,assoc-mult-string}, we conclude that ${}_n\mathsf{d}_M^0(((ab)c))={}_n\mathsf{d}_M^0((a(bc)))$.
\item Case \cref{ER6}:
\sbox{\eqjustbox}{$\text{with }= \text{ and prop.~of }\circ$} 
\begin{align*} \label{d0-a-1-A}
{}_{n}\mathsf{d}_M^0((a1))\eqjust{\cref{d0-4}}&~(\nabla_M \otimes \mathsf{id}_M)([a] \otimes {}_{n}\mathsf{d}_M^0(1))+\Big((\nabla_M \otimes \mathsf{id}_M) \circ (\mathsf{id}_{FM} \otimes \sigma_{M,FM})\Big)({}_{n}\mathsf{d}_M^0(a) \otimes [1]) \notag \\
\eqjust{\cref{d0-2}}&~(\nabla_M \otimes \mathsf{id}_M)([a] \otimes 0)+\Big((\nabla_M \otimes \mathsf{id}_M) \circ (\mathsf{id}_{FM} \otimes \sigma_{M,FM})\Big)({}_{n}\mathsf{d}_M^0(a) \otimes [1]) \notag \\
\eqjust{\text{prop.~of }\otimes}&~(\nabla_M \otimes \mathsf{id}_M)(0)+\Big((\nabla_M \otimes \mathsf{id}_M) \circ (\mathsf{id}_{FM} \otimes \sigma_{M,FM})\Big)({}_{n}\mathsf{d}_M^0(a) \otimes [1]) \notag \\
\eqjust{\substack{\nabla_M \otimes \mathsf{id}_M \text{ is}\\ \text{a mon.~hom.}}}&~0+\Big((\nabla_M \otimes \mathsf{id}_M) \circ (\mathsf{id}_{FM} \otimes \sigma_{M,FM})\Big)({}_{n}\mathsf{d}_M^0(a) \otimes [1]) \notag \\
\eqjust{}&~\Big((\nabla_M \otimes \mathsf{id}_M) \circ (\mathsf{id}_{FM} \otimes \sigma_{M,FM})\Big)({}_{n}\mathsf{d}_M^0(a) \otimes [1]) \notag
\end{align*}
But we can check on pure tensors that
\sbox{\eqjustbox}{ \text{with }= \text{ and } \cref{unit-monoid}} 
\begin{equation} \label{pure-1}
(\nabla_M \otimes \mathsf{id}_M) \circ (\mathsf{id}_{FM} \otimes \sigma_{M,FM}) \circ (\mathsf{id}_{FM \otimes M} \otimes \eta_M)=\mathrm{id}_{FM \otimes M}:
\end{equation}
\begin{align*}
&~(\nabla_M \otimes \mathsf{id}_M) \circ (\mathsf{id}_{FM} \otimes \sigma_{M,FM}) \circ (\mathsf{id}_{FM \otimes M} \otimes \eta_M)([a] \otimes m) \\
\eqjust{\substack{\text{unitor identified} \\ \text{with }= \text{ and } \cref{unit-monoid}}}&~(\nabla_M \otimes \mathsf{id}_M) \circ (\mathsf{id}_{FM} \otimes \sigma_{M,FM})([a] \otimes m \otimes [1]) \\
\eqjust{}&~(\nabla_M \otimes \mathsf{id}_M) ([a] \otimes [1] \otimes m) \\
\eqjust{\cref{mult-monoid}}&~([a][1]) \otimes m \\
\eqjust{\cref{OP4}}&~[a] \otimes m \\
\eqjust{}&~\mathsf{id}_{FM \otimes M}([a] \otimes m).
\end{align*}
\sbox{\eqjustbox}{\text{with }= \text{ and } \cref{unit-monoid}} 
It follows that
\begin{align*}
&~\Big((\nabla_M \otimes \mathsf{id}_M) \circ (\mathsf{id}_{FM} \otimes \sigma_{M,FM})\Big)({}_{n}\mathsf{d}_M^0(a) \otimes [1]) \\
\eqjust{\substack{\text{unitor identified} \\ \text{with }= \text{ and } \cref{unit-monoid}}}&~\Big((\nabla_M \otimes \mathsf{id}_M) \circ (\mathsf{id}_{FM} \otimes \sigma_{M,FM}) \circ (\mathsf{id}_{FM \otimes M} \otimes \eta_M)\Big)({}_{n}\mathsf{d}_M^0(a)) \\
\eqjust{\cref{pure-1}}&~\mathsf{d}_M^0(a).
\end{align*}
so that $\mathsf{d}_M^0((a1))=\mathsf{d}_M^0(a)$.
\item Case \cref{ER7}: We have
\sbox{\eqjustbox}{\cref{d0-4}} 
\begin{align} \label{d0-ab-1}
{}_{n}\mathsf{d}_M^0((ab))\eqjust{\cref{d0-4}}&~(\nabla_M \otimes \mathsf{id}_M)([a] \otimes {}_{n}\mathsf{d}_M^0(b))+\Big((\nabla_M \otimes \mathsf{id}_M) \circ (\mathsf{id}_{FM} \otimes \sigma_{M,FM})\Big)({}_{n}\mathsf{d}_M^0(a) \otimes [b])
\end{align}
and 
\sbox{\eqjustbox}{$\text{comm.~of }+$} 
\begin{align} \label{d0-ab-2}
{}_{n}\mathsf{d}_M^0((ba))\eqjust{\cref{d0-4}}&~(\nabla_M \otimes \mathsf{id}_M)([b] \otimes {}_{n}\mathsf{d}_M^0(a))+\Big((\nabla_M \otimes \mathsf{id}_M) \circ (\mathsf{id}_{FM} \otimes \sigma_{M,FM})\Big)({}_{n}\mathsf{d}_M^0(b) \otimes [a]) \notag \\
\eqjust{\text{prop.~of }\sigma}&~\Big((\nabla_M \otimes \mathsf{id}_M) \circ (\sigma_{FM \otimes M,FM})\Big)({}_{n}\mathsf{d}_M^0(a) \otimes [b]) \notag \\
&+\Big((\nabla_M \otimes \mathsf{id}_M) \circ (\mathsf{id}_{FM} \otimes \sigma_{M,FM}) \circ (\sigma_{FM,FM \otimes M})\Big)([a] \otimes {}_{n}\mathsf{d}_M^0(b)) \notag \\
\eqjust{\substack{\text{comm.~in} \\\text{Prop.~}\ref{FM-is-com-mon}}}&~\Big((\nabla_M \otimes \mathsf{id}_M) \circ (\sigma_{FM,FM} \otimes \mathsf{id}_M) \circ (\sigma_{FM \otimes M,FM})\Big)({}_{n}\mathsf{d}_M^0(a) \otimes [b]) \notag \\
&+\Big((\nabla_M \otimes \mathsf{id}_M) \circ (\sigma_{FM,FM} \otimes \mathsf{id}_M) \circ (\mathsf{id}_{FM} \otimes \sigma_{M,FM}) \circ (\sigma_{FM,FM \otimes M})\Big)([a] \otimes {}_{n}\mathsf{d}_M^0(b)) \notag \\
\eqjust{\text{comm.~of }+}&~ \Big((\nabla_M \otimes \mathsf{id}_M) \circ (\sigma_{FM,FM} \otimes \mathsf{id}_M) \circ (\mathsf{id}_{FM} \otimes \sigma_{M,FM}) \circ (\sigma_{FM,FM \otimes M})\Big)([a] \otimes {}_{n}\mathsf{d}_M^0(b)) \notag \\
&+\Big((\nabla_M \otimes \mathsf{id}_M) \circ (\sigma_{FM,FM} \otimes \mathsf{id}_M) \circ (\sigma_{FM \otimes M,FM})\Big)({}_{n}\mathsf{d}_M^0(a) \otimes [b]).
\end{align}
But we can check on pure tensors that
\begin{equation} \label{pure-d0-ab-1}
(\sigma_{FM,FM} \otimes \mathsf{id}_M) \circ (\mathsf{id}_{FM} \otimes \sigma_{M,FM}) \circ (\sigma_{FM,FM \otimes M})=\mathsf{id}_{FM \otimes FM \otimes M}:
\end{equation}
\begin{align*}
&~\Big((\sigma_{FM,FM} \otimes \mathsf{id}_M) \circ (\mathsf{id}_{FM} \otimes \sigma_{M,FM}) \circ (\sigma_{FM,FM \otimes M})\Big)([a] \otimes [b] \otimes m) \\
=&~\Big((\sigma_{FM,FM} \otimes \mathsf{id}_M) \circ (\mathsf{id}_{FM} \otimes \sigma_{M,FM})\Big)([b] \otimes m \otimes [a]) \\
=&~(\sigma_{FM,FM} \otimes \mathsf{id}_M)([b] \otimes [a] \otimes m) \\
=&~[a] \otimes [b] \otimes m \\
=&~\mathsf{id}_{FM \otimes FM \otimes M}([a] \otimes [b] \otimes m)
\end{align*}
and that
\begin{equation} \label{pure-d0-ab-2}
(\sigma_{FM,FM} \otimes \mathsf{id}_M) \circ (\sigma_{FM \otimes M,FM})=\mathsf{id}_{FM} \otimes \sigma_{M,FM}:
\end{equation}
\begin{align*}
&~\Big((\sigma_{FM,FM} \otimes \mathsf{id}_M) \circ (\sigma_{FM \otimes M,FM})\Big)([a] \otimes m \otimes [b]) \\
=&~(\sigma_{FM,FM} \otimes \mathsf{id}_M)([b] \otimes [a] \otimes m) \\
=&~[a] \otimes [b] \otimes m \\
=&~\Big(\mathsf{id}_{FM} \otimes \sigma_{M,FM}\Big)([a] \otimes m \otimes [b]).
\end{align*}
Combining \cref{d0-ab-1,d0-ab-2,pure-d0-ab-1,pure-d0-ab-2}, we obtain that ${}_{n}\mathsf{d}_M^0((ab))={}_{n}\mathsf{d}_M^0((ba))$.
\item Case \cref{ER8}:
\sbox{\eqjustbox}{$(\mathsf{id}_{FM} \otimes \sigma_{M,FM})$}
\begin{align*}
{}_{n}\mathsf{d}_M^0(((a+b)c))\eqjust{\cref{d0-4}}&~(\nabla_M \otimes \mathsf{id}_M)([(a+b)] \otimes {}_{n}\mathsf{d}_M^0(c)) \\
&+\Big((\nabla_M \otimes \mathsf{id}_M) \circ (\mathsf{id}_{FM} \otimes \sigma_{M,FM})\Big)({}_{n}\mathsf{d}_M^0((a+b)) \otimes [c]) \\
\eqjust{\substack{\cref{OP1} \text{ and } \\ \cref{d0-5}}}&~(\nabla_M \otimes \mathsf{id}_M)(([a]+[b]) \otimes {}_{n}\mathsf{d}_M^0(c)) \\
&+\Big((\nabla_M \otimes \mathsf{id}_M) \circ (\mathsf{id}_{FM} \otimes \sigma_{M,FM})\Big)(({}_{n}\mathsf{d}_M^0(a)+{}_{n}\mathsf{d}_M^0(b)) \otimes [c]) \\
\eqjust{\substack{\otimes \text{ is biadditive} \\ \text{on elements}}}&~(\nabla_M \otimes \mathsf{id}_M)(([a] \otimes {}_{n}\mathsf{d}_M^0(c))+([b] \otimes {}_{n}\mathsf{d}_M^0(c))) \\
&+\Big((\nabla_M \otimes \mathsf{id}_M) \circ (\mathsf{id}_{FM} \otimes \sigma_{M,FM})\Big)(({}_{n}\mathsf{d}_M^0(a) \otimes [c])+({}_{n}\mathsf{d}_M^0(b) \otimes [c]))  \\
\eqjust{\substack{\nabla_M \otimes \mathsf{id}_M \text{ and } \\ (\nabla_M \otimes \mathsf{id}_M)\,\circ \\ (\mathsf{id}_{FM} \otimes \sigma_{M,FM}) \\ \text{ are mon.~hom.}}}&~(\nabla_M \otimes \mathsf{id}_M)([a] \otimes {}_{n}\mathsf{d}_M^0(c))+(\nabla_M \otimes \mathsf{id}_M)([b] \otimes {}_{n}\mathsf{d}_M^0(c)) \\
&+\Big((\nabla_M \otimes \mathsf{id}_M) \circ (\mathsf{id}_{FM} \otimes \sigma_{M,FM})\Big)({}_{n}\mathsf{d}_M^0(a) \otimes [c]) \\
&+\Big((\nabla_M \otimes \mathsf{id}_M) \circ (\mathsf{id}_{FM} \otimes \sigma_{M,FM})\Big)({}_{n}\mathsf{d}_M^0(b) \otimes [c]) \\
\eqjust{\text{comm.~of }+}&~(\nabla_M \otimes \mathsf{id}_M)([a] \otimes {}_{n}\mathsf{d}_M^0(c))+\Big((\nabla_M \otimes \mathsf{id}_M) \circ (\mathsf{id}_{FM} \otimes \sigma_{M,FM})\Big)({}_{n}\mathsf{d}_M^0(a) \otimes [c]) \\
&+(\nabla_M \otimes \mathsf{id}_M)([b] \otimes {}_{n}\mathsf{d}_M^0(c)) +\Big((\nabla_M \otimes \mathsf{id}_M) \circ (\mathsf{id}_{FM} \otimes \sigma_{M,FM})\Big)({}_{n}\mathsf{d}_M^0(b) \otimes [c]) \\
\eqjust{\cref{d0-4}}&~{}_{n}\mathsf{d}_M^0((ac))+{}_{n}\mathsf{d}_M^0((bc)) \\
\eqjust{\cref{d0-5}}&~{}_{n}\mathsf{d}_M^0(((ac)+(bc))).
\end{align*}
\item Case \cref{ER9}:
\sbox{\eqjustbox}{$(\mathsf{id}_{FM} \otimes \sigma_{M,FM})$}
\begin{align*}
{}_{n}\mathsf{d}_M^0((0a))\eqjust{\cref{d0-4}}&~(\nabla_M \otimes \mathsf{id}_M)([0] \otimes {}_{n}\mathsf{d}_M^0(a))+\Big((\nabla_M \otimes \mathsf{id}_M) \circ (\mathsf{id}_{FM} \otimes \sigma_{M,FM})\Big)({}_{n}\mathsf{d}_M^0(0) \otimes [a]) \\
\eqjust{\cref{d0-1}}&~(\nabla_M \otimes \mathsf{id}_M)(0)+\Big((\nabla_M \otimes \mathsf{id}_M) \circ (\mathsf{id}_{FM} \otimes \sigma_{M,FM})\Big)(0) \\
\eqjust{\substack{\nabla_M \otimes \mathsf{id}_M \text{ and } \\ (\nabla_M \otimes \mathsf{id}_M)\,\circ \\ (\mathsf{id}_{FM} \otimes \sigma_{M,FM}) \\ \text{ are mon.~hom.}}}&~0+0 \\
\eqjust{}&~0 \\
\eqjust{\cref{d0-1}}&~{}_{n}\mathsf{d}_M^0(0).
\end{align*}
\item Case \cref{ER10}:
\sbox{\eqjustbox}{\cref{d0-1}}
\begin{align*}
{}_{n}\mathsf{d}_M^0(x_0)\eqjust{\cref{d0-3}}&~[1] \otimes 0 \\
\eqjust{}&~ 0 \\
\eqjust{\cref{d0-1}}&~{}_{n}\mathsf{d}_M^0(0)
\end{align*}
\item Case \cref{ER11}:
\sbox{\eqjustbox}{\cref{d0-1}}
\begin{align*}
{}_n\mathsf{d}_M^0((x_m+x_n))\eqjust{\cref{d0-5}}&~{}_n\mathsf{d}_M^0(x_m)+{}_n\mathsf{d}_M^0(x_n) \\
\eqjust{\cref{d0-3}}&~([1] \otimes m)+([1] \otimes n) \\
\eqjust{}&~[1] \otimes (m+n) \\ 
\eqjust{\cref{d0-3}}&~{}_n\mathsf{d}_M^0(x_{m+n}).
\end{align*}
\item Case \cref{ER12}: Suppose that $b \sim a$ is obtained from $a \sim b$. Suppose also that ${}_n\mathsf{d}_M^0(a)={}_n\mathsf{d}_M^0(b)$. We obtain
${}_n\mathsf{d}_M^0(b)={}_n\mathsf{d}_M^0(a)$.
\item Case \cref{ER13}: Suppose that $a \sim c$ is obtained from $a \sim b$ and $b \sim c$. Suppose also that ${}_n\mathsf{d}_M^0(a)={}_n\mathsf{d}_M^0(b)$, ${}_n\mathsf{d}_M^0(b)={}_n\mathsf{d}_M^0(c)$. We obtain
\[
{}_n\mathsf{d}_M^0(a)={}_n\mathsf{d}_M^0(b)={}_n\mathsf{d}_M^0(c).
\]
\item Case \cref{ER14}: Suppose that $(a+c) \sim (b+c)$ is obtained from $a \sim b$. Suppose also that ${}_n\mathsf{d}_M^0(a)={}_n\mathsf{d}_M^0(b)$. We obtain
\sbox{\eqjustbox}{\cref{d0-5}}
\begin{align*}
{}_n\mathsf{d}_M^0((a+c))\eqjust{\cref{d0-5}}&~{}_n\mathsf{d}_M^0(a)+{}_n\mathsf{d}_M^0(c) \\
\eqjust{}&~{}_n\mathsf{d}_M^0(b)+{}_n\mathsf{d}_M^0(c) \\
\eqjust{\cref{d0-5}}&~{}_n\mathsf{d}_M^0((b+c)).
\end{align*}
\item Case \cref{ER15}: Suppose that $(ac) \sim (bc)$ is obtained from $a \sim b$. Suppose also that ${}_n\mathsf{d}_M^0(a)={}_n\mathsf{d}_M^0(b)$. We obtain
\begin{align*}
{}_{n}\mathsf{d}_M^0((ac))\eqjust{\cref{d0-4}}&~(\nabla_M \otimes \mathsf{id}_M)([a] \otimes {}_{n}\mathsf{d}_M^0(c))+\Big((\nabla_M \otimes \mathsf{id}_M) \circ (\mathsf{id}_{FM} \otimes \sigma_{M,FM})\Big)({}_{n}\mathsf{d}_M^0(a) \otimes [c]) \\
\eqjust{}&~(\nabla_M \otimes \mathsf{id}_M)([b] \otimes {}_{n}\mathsf{d}_M^0(c))+\Big((\nabla_M \otimes \mathsf{id}_M) \circ (\mathsf{id}_{FM} \otimes \sigma_{M,FM})\Big)({}_{n}\mathsf{d}_M^0(b) \otimes [c]) \\
\eqjust{\cref{d0-4}}&~{}_{n}\mathsf{d}_M^0((bc)).
\end{align*}
\item Case \cref{ER16}: Suppose that $f(a) \sim f(b)$ is obtained from $a \sim b$. Suppose also that ${}_n\mathsf{d}_M^0(a)={}_n\mathsf{d}_M^0(b)$. We obtain
\begin{align*}
{}_{n}\mathsf{d}_M^0(f(a))\eqjust{\cref{d0-6}}&~n\cdot {}_{n}\mathsf{d}^0_M(a) \\
\eqjust{}&~n\cdot {}_{n}\mathsf{d}^0_M(b) \\
\eqjust{\cref{d0-6}}&~{}_{n}\mathsf{d}_M^0(f(b)). \qedhere
\end{align*}
\end{itemize}
\end{proof}
\cref{prop:d0-preserves-sim} allows us to give the following definition.
\begin{definition} \label{def-d}
Let $M$ be a commutative monoid. We define a function
\[
{}_{n}\mathsf{d}_M\colon FM \rightarrow FM \otimes M 
\]
by the formula
\[
{}_{n}\mathsf{d}_M([a]):={}_{n}\mathsf{d}_M^0(a)
\]
for every $[a] \in FM$.
\end{definition}
\begin{remark} \label{rem:identities-d}
We thus obtain the following identities from \cref{def:d0}:
\begin{enumerate}
\item ${}_{n}\mathsf{d}_M([0])=0$; \label{d-1}
\item ${}_{n}\mathsf{d}_M([1])=0$; \label{d-2}
\item for any $m \in M$, ${}_{n}\mathsf{d}_M([x_m])=[1] \otimes m$; \label{d-3}
\item for all $[a],[b] \in FM$,
\[{}_{n}\mathsf{d}_M([a][b])=(\nabla_M \otimes \mathsf{id}_M)([a] \otimes {}_{n}\mathsf{d}_M([b]))+\Big((\nabla_M \otimes \mathsf{id}_M) \circ (\mathsf{id}_{FM} \otimes \sigma_{M,FM})\Big)({}_{n}\mathsf{d}_M([a]) \otimes [b]);\] \label{d-4}
\item for all $[a],[b] \in FM$, ${}_{n}\mathsf{d}_M([a]+[b])={}_{n}\mathsf{d}_M([a])+{}_{n}\mathsf{d}_M([b])$; \label{d-5}
\item for every $[a] \in FM$, ${}_{n}\mathsf{d}_M(\mathbf{f}([a]))=n\cdot {}_{n}\mathsf{d}_M([a])$. \label{d-6}
\end{enumerate}
\end{remark}
The next proposition follows from \cref{d-1,d-5} in \cref{rem:identities-d}.
\begin{proposition} \label{d-is-a-mon-hom}
For every commutative monoid $M$, the function
\[
{}_{n}\mathsf{d}_M\colon FM \rightarrow FM \otimes M 
\]
is a monoid homomorphism.
\end{proposition}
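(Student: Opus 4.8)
The plan is to unwind the definition of a monoid homomorphism between the commutative monoids $(FM,+,[0])$ and $(FM \otimes M,+,0)$ and observe that the two required conditions are exactly the identities already recorded in \cref{rem:identities-d}. Concretely, to show that ${}_{n}\mathsf{d}_M$ is a monoid homomorphism I must verify two things: that it sends the additive unit $[0]$ of $FM$ to the additive unit $0$ of $FM \otimes M$, and that it is additive, i.e.\ ${}_{n}\mathsf{d}_M([a]+[b])={}_{n}\mathsf{d}_M([a])+{}_{n}\mathsf{d}_M([b])$ for all $[a],[b] \in FM$.

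First I would handle preservation of the unit. By \cref{OP3} the additive unit of $FM$ is $[0]$, and by \cref{d-1} in \cref{rem:identities-d} we have ${}_{n}\mathsf{d}_M([0])=0$, which is precisely the additive unit of $FM \otimes M$. This gives the first condition immediately. Next I would handle additivity, which is exactly the content of \cref{d-5} in \cref{rem:identities-d}: for all $[a],[b] \in FM$,
\[
{}_{n}\mathsf{d}_M([a]+[b])={}_{n}\mathsf{d}_M([a])+{}_{n}\mathsf{d}_M([b]).
\]
Combining these two observations yields that ${}_{n}\mathsf{d}_M$ preserves both the unit and the addition, hence is a homomorphism of commutative monoids.

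There is essentially no obstacle here, since the substantive work has already been discharged: \cref{prop:d0-preserves-sim} ensures that ${}_{n}\mathsf{d}_M$ is well defined on equivalence classes, and the defining clauses \cref{d0-1,d0-5} of ${}_{n}\mathsf{d}_M^0$ in \cref{def:d0} descend, via \cref{def-d}, to the identities \cref{d-1,d-5}. The only point worth stating carefully is that the relevant monoid structure on the source is the \emph{additive} one (not the multiplicative rig structure, with respect to which ${}_{n}\mathsf{d}_M$ is of course not multiplicative, satisfying instead the product rule \cref{d-4}). With that clarification the proposition follows in a couple of lines.
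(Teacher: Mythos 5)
Your proof is correct and takes exactly the same route as the paper, which derives the proposition directly from the identities \cref{d-1,d-5} of \cref{rem:identities-d} (preservation of the additive unit and additivity). Your additional remarks about well-definedness via \cref{prop:d0-preserves-sim} and about the additive versus multiplicative structure are accurate clarifications but not needed beyond what the paper states.
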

\begin{proposition} \label{naturality-d-preterms}
Let $M,N$ be commutative momoids and let $\psi\colon M \rightarrow N$ be a monoid homomorphism. The diagram
\[
\begin{tikzcd}
F_0M \arrow[rr, "{}_{n}\mathsf{d}^0_M"] \arrow[d, "F_0\psi"'] &  & FM \otimes M \arrow[d, "F\psi \otimes \psi"] \\
F_0N \arrow[rr, "{}_{n}\mathsf{d}^0_N"']                      &  & FN \otimes N                                
\end{tikzcd}
\]
commutes.
\end{proposition}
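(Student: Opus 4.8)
The plan is to prove, by structural induction on the generators of $F_0M$ listed in \cref{def:preset}, that
\[
(F\psi \otimes \psi)\big({}_{n}\mathsf{d}^0_M(a)\big) = {}_{n}\mathsf{d}^0_N\big(F_0\psi(a)\big)
\]
for every $a \in F_0M$; this is exactly the commutativity of the square. Throughout I will use that $F\psi \otimes \psi$ is a monoid homomorphism, being a tensor product of the monoid homomorphisms $F\psi$ and $\psi$, so that it is additive and commutes with the $\mathbb{N}$-action $n \cdot (-)$, together with the identity $F\psi([b]) = [F_0\psi(b)]$ from \cref{Fpsi-and-F0psi}.

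The three base cases $0$, $1$ and $x_m$ are immediate. For $0$ and $1$ both sides vanish, using \cref{d0-1,d0-2}, the corresponding clauses \cref{F0-id-1,F0-id-2}, and that $F\psi \otimes \psi$ sends $0$ to $0$. For $x_m$ I compute $(F\psi \otimes \psi)([1] \otimes m) = [1] \otimes \psi(m)$ using $F\psi([1]) = [1]$, and compare with ${}_{n}\mathsf{d}^0_N(x_{\psi(m)}) = [1] \otimes \psi(m)$ via \cref{d0-3,F0-id-3}. The inductive step for an addition $(b+c)$ follows at once from additivity of $F\psi \otimes \psi$, the defining clause \cref{d0-5}, and \cref{F0-id-4}; the step for a self-map term $f(b)$ follows from \cref{d0-6}, the fact that $F\psi \otimes \psi$ preserves $n \cdot (-)$, and \cref{F0-id-6}.

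The multiplication case $(bc)$ is the main obstacle, since ${}_{n}\mathsf{d}^0_M((bc))$ is defined in \cref{d0-4} as a sum of two terms, each built from the composite operators $\nabla_M \otimes \mathsf{id}_M$ and $(\nabla_M \otimes \mathsf{id}_M) \circ (\mathsf{id}_{FM} \otimes \sigma_{M,FM})$, so I must push $F\psi \otimes \psi$ through these. The key is to establish the two intertwining identities
\[
(F\psi \otimes \psi) \circ (\nabla_M \otimes \mathsf{id}_M) = (\nabla_N \otimes \mathsf{id}_N) \circ (F\psi \otimes F\psi \otimes \psi)
\]
and
\[
(F\psi \otimes \psi) \circ (\nabla_M \otimes \mathsf{id}_M) \circ (\mathsf{id}_{FM} \otimes \sigma_{M,FM}) = (\nabla_N \otimes \mathsf{id}_N) \circ (\mathsf{id}_{FN} \otimes \sigma_{N,FN}) \circ (F\psi \otimes \psi \otimes F\psi),
\]
both of which follow from the naturality of $\nabla$ (\cref{eta-nabla-nat}), the naturality of the symmetry $\sigma$ in the symmetric monoidal category $\mathsf{CMon}$, and the bifunctoriality of $\otimes$. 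Granting these, I apply $F\psi \otimes \psi$ to each of the two summands of \cref{d0-4}, move it inside using the intertwining identities, and then rewrite the resulting pure tensors: the inductive hypothesis converts $(F\psi \otimes \psi)({}_{n}\mathsf{d}^0_M(b))$ and $(F\psi \otimes \psi)({}_{n}\mathsf{d}^0_M(c))$ into ${}_{n}\mathsf{d}^0_N(F_0\psi(b))$ and ${}_{n}\mathsf{d}^0_N(F_0\psi(c))$, while \cref{Fpsi-and-F0psi} converts $F\psi([b])$ and $F\psi([c])$ into $[F_0\psi(b)]$ and $[F_0\psi(c)]$. The outcome is precisely ${}_{n}\mathsf{d}^0_N\big((F_0\psi(b)\,F_0\psi(c))\big)$, which equals ${}_{n}\mathsf{d}^0_N(F_0\psi((bc)))$ by \cref{F0-id-5}, closing the induction.
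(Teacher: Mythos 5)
Your proposal is correct and follows essentially the same route as the paper's own proof: structural induction on $F_0M$, with the base cases, addition case, and $f(a)$ case handled identically, and the multiplication case resolved via precisely the two intertwining identities you state, which the paper likewise derives from the naturality of $\nabla$ (\cref{eta-nabla-nat}), the naturality of $\sigma$, and the bifunctoriality of $\otimes$, before applying the inductive hypotheses and \cref{Fpsi-and-F0psi}. There is no gap.
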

\begin{proof}
By induction on $F_0M$.
\begin{itemize}
\item We have
\sbox{\eqjustbox}{$\text{a mon.~hom.}$}
\begin{align*}
((F\psi \otimes \psi) \circ {}_{n}\mathsf{d}^0_M)(0)\eqjust{\cref{d0-1}}&~(F\psi \otimes \psi)(0) \\
\eqjust{\substack{F\psi \otimes \psi \text{ is } \\ \text{a mon.~hom.}}}&~0 \\
\eqjust{\cref{d0-1}}&~{}_{n}\mathsf{d}^0_N(0) \\
\eqjust{\cref{F0-id-1}}&~_{n}\mathsf{d}^0_N(F_0\psi(0)) \\
\eqjust{}&~({}_{n}\mathsf{d}^0_N \circ F_0\psi)(0).
\end{align*}
\item We have
\sbox{\eqjustbox}{$\text{a mon.~hom.}$}
\begin{align*}
((F\psi \otimes \psi) \circ {}_{n}\mathsf{d}^0_M)(1)\eqjust{\cref{d0-2}}&~(F\psi \otimes \psi)(0) \\
\eqjust{\substack{F\psi \otimes \psi \text{ is } \\ \text{a mon.~hom.}}}&~0 \\
\eqjust{\cref{d0-2}}&~{}_{n}\mathsf{d}^0_N(1) \\
\eqjust{\cref{F0-id-2}}&~{}_{n}\mathsf{d}^0_N(F_0\psi(1)) \\
\eqjust{}&~({}_{n}\mathsf{d}^0_N \circ F_0\psi)(1).
\end{align*}
\item Let $m \in M$. We have
\sbox{\eqjustbox}{\cref{d0-1}}
\begin{align*}
((F\psi \otimes \psi) \circ {}_{n}\mathsf{d}^0_M)(x_m)\eqjust{\cref{d0-3}}&~(F\psi \otimes \psi)([1] \otimes m) \\
\eqjust{}&~F\psi([1]) \otimes \psi(m) \\
\eqjust{\cref{Fromanpsi-2}}&~[1] \otimes \psi(m) \\
\eqjust{\cref{d0-3}}&~{}_{n}\mathsf{d}_N^0(x_{\psi(m)}) \\
\eqjust{\cref{F0-id-3}}&~{}_{n}\mathsf{d}_N^0(F_0\psi(x_m)) \\
\eqjust{}&~({}_{n}\mathsf{d}_N^0 \circ F_0\psi)(x_m).
\end{align*}
\item Let $a,b \in F_0M$ such that
\begin{equation} \label{hypo-1-d0-nat-mult}
((F\psi \otimes \psi) \circ {}_{n}\mathsf{d}_M^0)(a)=({}_{n}\mathsf{d}_N^0 \circ F_0\psi)(a)
\end{equation}
and
\begin{equation} \label{hypo-2-d0-nat-mult}
((F\psi \otimes \psi) \circ {}_{n}\mathsf{d}_M^0)(b)=({}_{n}\mathsf{d}_N^0 \circ F_0\psi)(b).
\end{equation}
We obtain
\sbox{\eqjustbox}{$\otimes,~\circ \text{ and }\mathsf{id}$}
\begin{align*}
&~((F\psi \otimes \psi) \circ {}_{n}\mathsf{d}_M^0)((ab)) \\
\eqjust{\cref{d0-4}}&~(F\psi \otimes \psi)\Big((\nabla_M \otimes \mathsf{id}_M)([a] \otimes {}_{n}\mathsf{d}_M^0(b))+\Big((\nabla_M \otimes \mathsf{id}_M) \circ (\mathsf{id}_{FM} \otimes \sigma_{M,FM})\Big)({}_{n}\mathsf{d}_M^0(a) \otimes [b])\Big) \\
\eqjust{\substack{F\psi \otimes \psi \text{ is a}\\ \text{mon.~hom.}}}&~(F\psi \otimes \psi)\Big((\nabla_M \otimes \mathsf{id}_M)([a] \otimes {}_{n}\mathsf{d}_M^0(b))\Big)+(F\psi \otimes \psi)\bigg(\Big((\nabla_M \otimes \mathsf{id}_M) \circ (\mathsf{id}_{FM} \otimes \sigma_{M,FM})\Big)({}_{n}\mathsf{d}_M^0(a) \otimes [b])\bigg) \\
\eqjust{\text{prop~ of }\circ}&~\Big((F\psi \otimes \psi) \circ (\nabla_M \otimes \mathsf{id}_M)\Big)([a] \otimes {}_{n}\mathsf{d}_M^0(b))+\Big((F\psi \otimes \psi) \circ (\nabla_M \otimes \mathsf{id}_M) \circ (\mathsf{id}_{FM} \otimes \sigma_{M,FM})\Big)({}_{n}\mathsf{d}_M^0(a) \otimes [b]) \\
\eqjust{\text{Prop.~}\ref{eta-nabla-nat}}&~\Big((\nabla_N \otimes \psi) \circ (F\psi \otimes F\psi \otimes \mathsf{id}_M)\Big)([a] \otimes {}_{n}\mathsf{d}_M^0(b)) \\
&+\Big((\nabla_N \otimes \psi) \circ (F\psi \otimes F\psi \otimes \mathsf{id}_M) \circ (\mathsf{id}_{FM} \otimes \sigma_{M,FM})\Big)({}_{n}\mathsf{d}_M^0(a) \otimes [b]) \\
\eqjust{\substack{\text{prop.~of }\\ \otimes,~\circ \text{ and }\mathsf{id}}}&~\Big((\nabla_N \otimes \mathsf{id}_N) \circ (F\psi \otimes F\psi \otimes \psi)\Big)([a] \otimes {}_{n}\mathsf{d}_M^0(b)) \\
&+\Big((\nabla_N \otimes \mathsf{id}_N) \circ (F\psi \otimes F\psi \otimes \psi) \circ (\mathsf{id}_{FM} \otimes \sigma_{M,FM})\Big)({}_{n}\mathsf{d}_M^0(a) \otimes [b]) \\
\eqjust{\text{nat.~of }\sigma}&~\Big((\nabla_N \otimes \mathsf{id}_N) \circ (F\psi \otimes F\psi \otimes \psi)\Big)([a] \otimes {}_{n}\mathsf{d}_M^0(b)) \\
&+\Big((\nabla_N \otimes \mathsf{id}_N) \circ (F\psi \otimes \sigma_{N,FN}) \circ (\mathsf{id}_{FM} \otimes \psi \otimes F\psi)\Big)({}_{n}\mathsf{d}_M^0(a) \otimes [b]) \\
\eqjust{\substack{\text{prop.~of }\\ \otimes,~\circ \text{ and }\mathsf{id}}}&~\Big((\nabla_N \otimes \mathsf{id}_N) \circ (F\psi \otimes F\psi \otimes \psi)\Big)([a] \otimes {}_{n}\mathsf{d}_M^0(b)) \\
&+\Big((\nabla_N \otimes \mathsf{id}_N) \circ (\mathsf{id}_{FN} \otimes \sigma_{N,FN}) \circ (F\psi \otimes \psi \otimes F\psi)\Big)({}_{n}\mathsf{d}_M^0(a) \otimes [b]) \\
\eqjust{\substack{\text{prop.~of }\\ \otimes \text{ and }\circ}}&~(\nabla_N \otimes \mathsf{id}_N)\Big((F\psi[a]) \otimes (F\psi \otimes \psi)({}_{n}\mathsf{d}_M^0(b))\Big) \\
&+\Big((\nabla_N \otimes \mathsf{id}_N) \circ (\mathsf{id}_{FN} \otimes \sigma_{N,FN})\Big)\Big((F\psi \otimes \psi)({}_{n}\mathsf{d}_M^0(a)) \otimes F\psi([b])\Big) \\
\eqjust{\substack{\cref{hypo-1-d0-nat-mult} \text{ and }\\ \cref{hypo-2-d0-nat-mult}}} &~(\nabla_N \otimes \mathsf{id}_N)\Big(F\psi([a]) \otimes ({}_{n}\mathsf{d}_N^0 \circ F_0\psi)(b)\Big) \\
&+\Big((\nabla_N \otimes \mathsf{id}_N) \circ (\mathsf{id}_{FN} \otimes \sigma_{N,FN})\Big)\Big(({}_{n}\mathsf{d}_N^0 \circ F_0\psi)(a)) \otimes F\psi([b])\Big) \\
\eqjust{\text{prop.~of }\circ}&~(\nabla_N \otimes \mathsf{id}_N)\Big(F\psi([a]) \otimes {}_{n}\mathsf{d}_N^0(F_0\psi(b))\Big) \\
&+\Big((\nabla_N \otimes \mathsf{id}_N) \circ (\mathsf{id}_{FN} \otimes \sigma_{N,FN})\Big)\Big({}_{n}\mathsf{d}_N^0(F_0\psi(a)) \otimes F\psi([b])\Big) \\
\eqjust{\text{Prop.~}\ref{Fpsi-and-F0psi}}&~(\nabla_N \otimes \mathsf{id}_N)\Big([F_0\psi(a)] \otimes {}_{n}\mathsf{d}_N^0(F_0\psi(b))\Big) \\
&+\Big((\nabla_N \otimes \mathsf{id}_N) \circ (\mathsf{id}_{FN} \otimes \sigma_{N,FN})\Big)\Big({}_{n}\mathsf{d}_N^0(F_0\psi(a)) \otimes [F_0\psi(b)]\Big) \\
\eqjust{\cref{d0-4}}&~{}_{n}\mathsf{d}_N^0((F_0\psi(a)F_0\psi(b)))\\
\eqjust{\cref{F0-id-5}}&~{}_{n}\mathsf{d}_N^0(F_0\psi((ab))) \\
\eqjust{}&~({}_{n}\mathsf{d}_N^0 \circ F_0\psi)((ab)).
\end{align*}
\item Let $a,b \in F_0M$ such that
\[
((F\psi \otimes \psi) \circ {}_{n}\mathsf{d}_M^0)(a)=({}_{n}\mathsf{d}_N^0 \circ F_0\psi)(a)
\]
and
\[
((F\psi \otimes \psi) \circ {}_{n}\mathsf{d}_M^0)(b)=({}_{n}\mathsf{d}_N^0 \circ F_0\psi)(b).
\]
We obtain
\sbox{\eqjustbox}{$F\psi \otimes \psi \text{ is a}$}
\begin{align*}
((F\psi \otimes \psi) \circ {}_{n}\mathsf{d}_M^0)((a+b))
\eqjust{}&~(F\psi \otimes \psi)({}_{n}\mathsf{d}_M^0((a+b))) \\
\eqjust{\cref{d0-5}}&~(F\psi \otimes \psi)({}_{n}\mathsf{d}_M^0(a)+{}_{n}\mathsf{d}_M^0(b)) \\
\eqjust{\substack{F\psi \otimes \psi \text{ is a} \\ \text{mon.~hom.}}}&~(F\psi \otimes \psi)({}_{n}\mathsf{d}_M^0(a))+(F\psi \otimes \psi)({}_{n}\mathsf{d}_M^0(b)) \\
\eqjust{}&~((F\psi \otimes \psi) \circ {}_{n}\mathsf{d}_M^0)(a)+((F\psi \otimes \psi) \circ {}_{n}\mathsf{d}_M^0)(b) \\
\eqjust{}&~({}_{n}\mathsf{d}_N^0 \circ F_0\psi)(a)+({}_{n}\mathsf{d}_N^0 \circ F_0\psi)(b) \\
\eqjust{}&~{}_{n}\mathsf{d}_N^0(F_0\psi(a))+{}_{n}\mathsf{d}_N^0(F_0\psi(b)) \\
\eqjust{\cref{d0-5}}&~{}_{n}\mathsf{d}_N^0(F_0\psi(a)+F_0\psi(b)) \\
\eqjust{\cref{F0-id-4}}&~{}_{n}\mathsf{d}_N^0(F_0\psi((a+b))) \\
\eqjust{}&~({}_{n}\mathsf{d}_N^0 \circ F_0\psi)((a+b)).
\end{align*}
\item Let $a \in F_0M$ such that
\[
((F\psi \otimes \psi) \circ {}_{n}\mathsf{d}_M^0)(a)=({}_{n}\mathsf{d}_N^0 \circ F_0\psi)(a).
\]
We obtain
\sbox{\eqjustbox}{$F\psi \otimes \psi \text{ is a}$}
\begin{align*}
((F\psi \otimes \psi) \circ {}_{n}\mathsf{d}_M^0)(f(a))\eqjust{}&~(F\psi \otimes \psi)({}_{n}\mathsf{d}_M^0(f(a))) \\
\eqjust{\cref{d0-6}}&~(F\psi \otimes \psi)(n\cdot {}_{n}\mathsf{d}^0_M(a)) \\
\eqjust{\substack{F\psi \otimes \psi \text{ is a} \\ \text{mon.~hom.}}}&~n \cdot (F\psi \otimes \psi)(\mathsf{d}^0_M(a)) \\
\eqjust{}&~n \cdot ((F\psi \otimes \psi) \circ {}_{n}\mathsf{d}_M^0)(a) \\
\eqjust{}&~n \cdot ({}_{n}\mathsf{d}_N^0 \circ F_0\psi)(a) \\
\eqjust{}&~n \cdot {}_{n}\mathsf{d}_N^0(F_0\psi(a)) \\
\eqjust{\cref{d0-6}}&~{}_{n}\mathsf{d}_N^0(f(F_0\psi(a))) \\
\eqjust{\cref{F0-id-6}}&~{}_{n}\mathsf{d}_N^0(F_0\psi(f(a))) \\
\eqjust{}&~({}_{n}\mathsf{d}_N^0 \circ F_0\psi)(f(a)). \qedhere
\end{align*} \qedhere
\end{itemize}
\end{proof}
\begin{corollary}
The family of monoid homomorphisms
\[
({}_{n}\mathsf{d}_M\colon FM \rightarrow FM \otimes M)_{M \in \mathsf{CMOn}}
\]
is a natural transformation in $\mathsf{CMOn}$ for every $n \in \mathbb{N}$.
\end{corollary}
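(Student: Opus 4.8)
The plan is to establish the single remaining naturality condition: for every monoid homomorphism $\psi\colon M \rightarrow N$, the square
\[
(F\psi \otimes \psi) \circ {}_{n}\mathsf{d}_M = {}_{n}\mathsf{d}_N \circ F\psi
\]
commutes. Combined with \cref{d-is-a-mon-hom}, which guarantees that each component ${}_{n}\mathsf{d}_M$ is a morphism of $\mathsf{CMon}$, this is precisely the statement that $({}_{n}\mathsf{d}_M)_{M \in \mathsf{CMon}}$ is a natural transformation. Since every element of $FM$ is an equivalence class $[a]$ for some $a \in F_0M$, it suffices to verify that the two composites agree when evaluated at an arbitrary $[a]$.

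First I would unwind the left-hand composite using \cref{def-d}, which gives ${}_{n}\mathsf{d}_M([a]) = {}_{n}\mathsf{d}^0_M(a)$, so that
\[
\big((F\psi \otimes \psi) \circ {}_{n}\mathsf{d}_M\big)([a]) = (F\psi \otimes \psi)\big({}_{n}\mathsf{d}^0_M(a)\big).
\]
The heavy lifting has already been carried out in \cref{naturality-d-preterms}, which asserts exactly that $(F\psi \otimes \psi) \circ {}_{n}\mathsf{d}^0_M = {}_{n}\mathsf{d}^0_N \circ F_0\psi$ at the level of preterms; applying it yields $(F\psi \otimes \psi)({}_{n}\mathsf{d}^0_M(a)) = {}_{n}\mathsf{d}^0_N(F_0\psi(a))$. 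I would then re-fold along \cref{def-d} to obtain ${}_{n}\mathsf{d}^0_N(F_0\psi(a)) = {}_{n}\mathsf{d}_N([F_0\psi(a)])$, and finally invoke \cref{Fpsi-and-F0psi}, which identifies $[F_0\psi(a)]$ with $F\psi([a])$, to conclude
\[
\big((F\psi \otimes \psi) \circ {}_{n}\mathsf{d}_M\big)([a]) = {}_{n}\mathsf{d}_N\big(F\psi([a])\big) = \big({}_{n}\mathsf{d}_N \circ F\psi\big)([a]).
\]

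The genuine obstacle, namely the induction over the inductive structure of $F_0M$ together with the bookkeeping of symmetries, multiplications, and the scaling by $n$ on the $f$-case, was already overcome in the proof of \cref{naturality-d-preterms}. What remains at the level of the corollary is purely a descent from preterms to their equivalence classes, which is routine once \cref{def-d} and \cref{Fpsi-and-F0psi} are in hand; no new ideas are required, and the argument is uniform in $n$.
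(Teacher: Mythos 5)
Your proposal is correct and follows exactly the same route as the paper's own proof: unfold ${}_{n}\mathsf{d}_M([a])$ to ${}_{n}\mathsf{d}^0_M(a)$ via \cref{def-d}, apply \cref{naturality-d-preterms} at the level of preterms, re-fold via \cref{def-d}, and conclude with \cref{Fpsi-and-F0psi}. The paper's proof is precisely this four-step chain of equalities evaluated on an arbitrary $[a] \in FM$, so there is nothing to add.
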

\begin{proof}
Let $M,N$ be commutative monoids and let $\psi:M \rightarrow N$ be a monoid homomorphism. Let $[a] \in FM$. We have
\sbox{\eqjustbox}{$\text{Prop.~}\ref{naturality-d-preterms}$}
\begin{align*}
(F\psi \otimes \psi)({}_{n}\mathsf{d}_M([a]))\eqjust{\text{Def.~}\ref{def-d}}&~(F\psi \otimes \psi)({}_{n}\mathsf{d}_M^0(a)) \\
\eqjust{\text{Prop.~}\ref{naturality-d-preterms}}&~{}_{n}\mathsf{d}_N^0(F_0\psi(a)) \\
\eqjust{\text{Def.~}\ref{def-d}}&~{}_{n}\mathsf{d}_N([F_0\psi(a)]) \\
\eqjust{\text{Prop.~}\ref{Fpsi-and-F0psi}}&~{}_{n}\mathsf{d}_N(F\psi([a])). \qedhere
\end{align*}
\end{proof}
\begin{proposition} \label{product}
The product rule is satisfied by ${}_{n}\mathsf{d}$ for every $n \in \mathbb{N}$.
\end{proposition}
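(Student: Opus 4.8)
The plan is to observe that this is almost immediate from the element-wise identity \cref{d-4} in \cref{rem:identities-d}, which was engineered precisely to encode the product rule. Concretely, I must establish the equality of natural transformations
\[
{}_{n}\mathsf{d}_M \circ \nabla_M=\big[(\nabla_M \otimes \mathsf{id}_M) \circ (\mathsf{id}_{FM} \otimes {}_{n}\mathsf{d}_M)\big]+\big[(\nabla_M \otimes \mathsf{id}_M) \circ (\mathsf{id}_{FM} \otimes \sigma_{M,FM}) \circ ({}_{n}\mathsf{d}_M \otimes \mathsf{id}_{FM})\big],
\]
where both sides are monoid homomorphisms $FM \otimes FM \rightarrow FM \otimes M$. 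The left-hand side is a composite of the monoid homomorphisms $\nabla_M$ and ${}_{n}\mathsf{d}_M$ (the latter by \cref{d-is-a-mon-hom}), and the right-hand side is a sum of composites of monoid homomorphisms built from $\nabla_M$, ${}_{n}\mathsf{d}_M$, $\sigma$ and identities; hence both sides are indeed monoid homomorphisms.

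First I would invoke the pure-tensor technique recalled in the conventions: since pure tensors generate $FM \otimes FM$ and both sides are monoid homomorphisms, it suffices to check the identity on an arbitrary pure tensor $[a] \otimes [b]$. Evaluating the left-hand side, I compute
\[
({}_{n}\mathsf{d}_M \circ \nabla_M)([a] \otimes [b])={}_{n}\mathsf{d}_M([a][b])
\]
using \cref{mult-monoid}, and then apply \cref{d-4} to rewrite ${}_{n}\mathsf{d}_M([a][b])$ as the sum
\[
(\nabla_M \otimes \mathsf{id}_M)([a] \otimes {}_{n}\mathsf{d}_M([b]))+\Big((\nabla_M \otimes \mathsf{id}_M) \circ (\mathsf{id}_{FM} \otimes \sigma_{M,FM})\Big)({}_{n}\mathsf{d}_M([a]) \otimes [b]).
\]

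Second I would evaluate the right-hand side on the same pure tensor, unwinding the tensor and composition operations on the element $[a] \otimes [b]$: the first summand becomes $(\nabla_M \otimes \mathsf{id}_M)([a] \otimes {}_{n}\mathsf{d}_M([b]))$ and the second becomes $\big((\nabla_M \otimes \mathsf{id}_M) \circ (\mathsf{id}_{FM} \otimes \sigma_{M,FM})\big)({}_{n}\mathsf{d}_M([a]) \otimes [b])$. These match the two summands obtained from the left-hand side term by term, completing the verification. I do not expect any genuine obstacle here: the only points requiring care are the bookkeeping that both sides are monoid homomorphisms (so the reduction to pure tensors is legitimate) and the correct tracking of which factor each map acts on. The substantive content is entirely contained in \cref{d-4}, which is itself just the product rule read off on elements, so the proof is essentially a transcription between the arrow-theoretic and element-theoretic formulations.
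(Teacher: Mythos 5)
Your proposal is correct and matches the paper's own proof essentially verbatim: both reduce to pure tensors $[a]\otimes[b]$ (legitimate since both sides are monoid homomorphisms and pure tensors generate $FM \otimes FM$), rewrite the left-hand side via \cref{mult-monoid} and \cref{d-4}, and identify the resulting two summands with the right-hand side evaluated on the same pure tensor. No gaps; the substantive content is indeed entirely carried by \cref{d-4}, exactly as in the paper.
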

\begin{proof}
Let $M$ be a commutative monoid. For all $[a],[b] \in FM$, we have
\sbox{\eqjustbox}{$\text{prop.~of }+$}
\begin{align*}
({}_{n}\mathsf{d}_M \circ \nabla_M)([a] \otimes [b])\eqjust{\cref{mult-monoid}}&~{}_{n}\mathsf{d}_M([a][b]) \\
\eqjust{\cref{d-4}}&~(\nabla_M \otimes \mathsf{id}_M)([a] \otimes {}_{n}\mathsf{d}_M([b]))+\Big((\nabla_M \otimes \mathsf{id}_M) \circ (\mathsf{id}_{FM} \otimes \sigma_{M,FM})\Big)({}_{n}\mathsf{d}_M([a]) \otimes [b]) \\
\eqjust{\substack{\text{prop.~of } \\ \otimes \text{ and }\mathsf{id}}}&~(\nabla_M \otimes \mathsf{id}_M)\Big((\mathrm{id}_{FM} \otimes {}_{n}\mathsf{d}_M)([a] \otimes [b])\Big) \\
&+\Big((\nabla_M \otimes \mathsf{id}_M) \circ (\mathsf{id}_{FM} \otimes \sigma_{M,FM})\Big)\Big(({}_{n}\mathsf{d}_M \otimes \mathsf{id}_{FM})([a] \otimes [b])\Big) \\
\eqjust{\text{prop.~of }\circ}&~\Big((\nabla_M \otimes \mathsf{id}_M) \circ (\mathrm{id}_{FM} \otimes {}_{n}\mathsf{d}_M)\Big)([a] \otimes [b]) \\
&+\Big((\nabla_M \otimes \mathsf{id}_M) \circ (\mathsf{id}_{FM} \otimes \sigma_{M,FM}) \circ ({}_{n}\mathsf{d}_M \otimes \mathsf{id}_{FM})\Big)([a] \otimes [b]) \\
\eqjust{\text{prop.~of }$+$}&~\Big(((\nabla_M \otimes \mathsf{id}_M) \circ (\mathrm{id}_{FM} \otimes {}_{n}\mathsf{d}_M))\\
&+((\nabla_M \otimes \mathsf{id}_M) \circ (\mathsf{id}_{FM} \otimes \sigma_{M,FM}) \circ ({}_{n}\mathsf{d}_M \otimes \mathsf{id}_{FM}))\Big)([a] \otimes [b]).
\end{align*}
Since pure tensors generate $FM \otimes FM$, it follows that
\[
{}_{n}\mathsf{d}_M \circ \nabla_M=((\nabla_M \otimes \mathsf{id}_M) \circ (\mathrm{id}_{FM} \otimes {}_{n}\mathsf{d}_M))+((\nabla_M \otimes \mathsf{id}_M) \circ (\mathsf{id}_{FM} \otimes \sigma_{M,FM}) \circ ({}_{n}\mathsf{d}_M \otimes \mathsf{id}_{FM})). \qedhere
\]
\end{proof}
\begin{proposition} \label{linear}
The linear rule is satisfied by ${}_{n}\mathsf{d}$ for every $n \in \mathbb{N}$.
\end{proposition}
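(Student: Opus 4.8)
The plan is to verify the linear rule ${}_{n}\mathsf{d}_M \circ u_M = \eta_M \otimes \mathsf{id}_M$ by a direct evaluation. Note first that both sides are functions $M \rightarrow FM \otimes M$: under our convention identifying the left unitor with an equality we have $\mathbb{N} \otimes M = M$, so the codomain of $\eta_M \otimes \mathsf{id}_M$ is indeed $FM \otimes M$ and the two sides are comparable. Since an equality of functions out of $M$ is checked pointwise, it suffices to fix an arbitrary $m \in M$ and show that both sides agree at $m$.

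For the left-hand side, I would compute, using the definition $u_M(m) = [x_m]$ from \cref{def:unit} together with identity \cref{d-3} of \cref{rem:identities-d},
\[
({}_{n}\mathsf{d}_M \circ u_M)(m) = {}_{n}\mathsf{d}_M([x_m]) = [1] \otimes m.
\]
For the right-hand side, under the identification $\mathbb{N} \otimes M = M$ the element $m$ corresponds to the pure tensor $1 \otimes m$, so, using $\eta_M(1) = 1[1] = [1]$ from \cref{unit-monoid},
\[
(\eta_M \otimes \mathsf{id}_M)(m) = \eta_M(1) \otimes m = [1] \otimes m.
\]
Comparing the two expressions yields the desired identity, and hence ${}_{n}\mathsf{d}_M \circ u_M = \eta_M \otimes \mathsf{id}_M$.

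I expect no genuine obstacle here. Unlike the product rule of \cref{product}, or the chain and interchange rules, the linear rule only probes ${}_{n}\mathsf{d}$ on the generators $[x_m]$, where by \cref{d0-3} its value was defined to be precisely $[1] \otimes m$; the definition of ${}_{n}\mathsf{d}^0_M$ was arranged exactly so that this rule holds on the nose. The only point requiring care is the bookkeeping of the monoidal unit: one must remember that $\eta_M$ has source $\mathbb{N}$ and exploit the strict-unitor convention $\mathbb{N} \otimes M = M$ to make the types on the two sides match. Once that identification is made explicit, the verification is immediate.
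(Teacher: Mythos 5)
Your proposal is correct and is essentially the paper's own proof: both verify the rule pointwise on $m \in M$ via $u_M(m)=[x_m]$, identity \cref{d-3}, the formula $\eta_M(1)=[1]$ from \cref{unit-monoid}, and the strict-unitor identification $\mathbb{N}\otimes M = M$. Your explicit remark that the types match under this identification is exactly the step the paper handles with its ``unitor identified with $=$'' justification, so there is nothing to add.
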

\begin{proof}
Let $M$ be a commutative monoid. For every $k \in \mathbb{N}$, we have
\sbox{\eqjustbox}{$\text{identified}$}
\begin{align*}
{}_{n}\mathsf{d}_M(u_M(m)) &~\eqjust{\cref{def:unit}}{}_{n}\mathsf{d}_M([x_m]) \\
&~\eqjust{\cref{d-3}}[1] \otimes m \\
&~\eqjust{\cref{unit-monoid}}\eta_M(1) \otimes \mathsf{id}_M(m) \\
&~\eqjust{}(\eta_M \otimes \mathsf{id}_M)(1 \otimes m) \\
&~\eqjust{\substack{\text{unitor} \\ \text{identified} \\ \text{with }=}}(\eta_M \otimes \mathsf{id}_M)(m).
\end{align*}
Thus
\[
{}_{n}\mathsf{d}_M \circ u_M = \eta_M \otimes \mathsf{id}_M. \qedhere
\]
\end{proof}
\begin{proposition} \label{chain-d0}
Let $M$ be a commutative monoid. We have
\[
{}_{n}\mathsf{d}_M \circ m_M^0=(\nabla_M \otimes \mathsf{id}_M) \circ (m_M \otimes {}_{n}\mathsf{d}_M) \circ {}_{n}\mathsf{d}^0_{FM}.
\]
\end{proposition}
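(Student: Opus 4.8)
The plan is to prove the identity by structural induction on $F_0FM$, checking the six generating cases \cref{preterms1,preterms2,preterms3,preterms4,preterms5,preterms6} read at the level of $FM$, so that the variable generator is $y_{[c]}$ with $[c] \in FM$ and the self-map symbol is $g$. Throughout I abbreviate the right-hand composite by $\Phi := (\nabla_M \otimes \mathsf{id}_M) \circ (m_M \otimes {}_{n}\mathsf{d}_M)$, so that the claim reads $\,{}_{n}\mathsf{d}_M \circ m_M^0 = \Phi \circ {}_{n}\mathsf{d}^0_{FM}$. Since $m_M$ is a monoid homomorphism and ${}_{n}\mathsf{d}_M$ is one by \cref{d-is-a-mon-hom}, while $\nabla_M \otimes \mathsf{id}_M$ is additive, the map $\Phi$ is additive; combined with \cref{m0-4} and \cref{d0-5} this settles the addition case \cref{preterms5} immediately from the induction hypothesis, and it also gives $\Phi(n\cdot w)=n\cdot\Phi(w)$, which I will use below.

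For the base cases $0$ and $1$ (\cref{preterms1,preterms2}), $m_M^0$ sends them to $[0]$ and $[1]$ by \cref{m0-1,m0-2}, whose images under ${}_{n}\mathsf{d}_M$ vanish by \cref{d-1,d-2}; on the other side ${}_{n}\mathsf{d}^0_{FM}$ already sends $0$ and $1$ to $0$ by \cref{d0-1,d0-2}, and $\Phi(0)=0$. For the variable case $y_{[c]}$ (\cref{preterms3}), the left side is ${}_{n}\mathsf{d}_M([c])$ by \cref{m0-3}, while ${}_{n}\mathsf{d}^0_{FM}(y_{[c]}) = [1]\otimes[c]$ by \cref{d0-3}, so that $\Phi([1]\otimes[c]) = (\nabla_M\otimes\mathsf{id}_M)(m_M([1])\otimes {}_{n}\mathsf{d}_M([c]))$; using $m_M([1])=[1]$ from \cref{m-2} together with the unit law of the commutative monoid $FM$ (\cref{FM-is-com-mon}, i.e.\ $\nabla_M([1]\otimes-)=\mathsf{id}_{FM}$), this collapses to ${}_{n}\mathsf{d}_M([c])$. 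For the self-map case $g(b)$ (\cref{preterms6}), \cref{m0-6} rewrites $m_M^0(g(b))$ as $\mathbf{f}(m_M^0(b))$, and \cref{d-6} then gives $n\cdot {}_{n}\mathsf{d}_M(m_M^0(b))$; on the right, \cref{d0-6} gives ${}_{n}\mathsf{d}^0_{FM}(g(b)) = n\cdot {}_{n}\mathsf{d}^0_{FM}(b)$, and the $\mathbb{N}$-linearity of $\Phi$ together with the induction hypothesis on $b$ produces the same value.

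The substantive case is the multiplication case $(bc)$ (\cref{preterms4}), which I expect to be the main obstacle. Applying $m_M^0$ and its multiplicativity \cref{m0-5} reduces the left side to ${}_{n}\mathsf{d}_M(m_M^0(b)\,m_M^0(c))$, which the product rule \cref{d-4} expands into two summands. On the right I expand ${}_{n}\mathsf{d}^0_{FM}((bc))$ by the product rule \cref{d0-4} read at the level of $FM$ into two summands and apply the additive map $\Phi$ to each, with the aim of matching the two expansions summand by summand. The crucial structural input is the multiplicativity of the monad multiplication, $m_M([b'][c']) = m_M([b'])\,m_M([c'])$ from \cref{m-5} (equivalently the algebra-modality square \cref{diag:algebra-modality}), which lets $\Phi$ convert the products $\nabla_{FM}$ occurring inside ${}_{n}\mathsf{d}^0_{FM}((bc))$ into products $\nabla_M$ in the target, at which point the induction hypothesis applied to $b$ and $c$ rewrites $m_M^0$ as ${}_{n}\mathsf{d}_M$ of the appropriate factor. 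The delicate bookkeeping is that after pushing $m_M$ through, the resulting factors are multiplied in a different order than \cref{d-4} prescribes, so reconciling the two requires the associativity and commutativity of multiplication in $FM$ (\cref{FM-is-com-mon}) and the naturality of $\sigma$ to commute the symmetry past the homomorphisms; I would verify the two resulting morphism identities on pure tensors, exactly as the analogous swap identities are checked in \cref{prop:d0-preserves-sim}. Summing the two matched summands reassembles precisely the product-rule expansion of ${}_{n}\mathsf{d}_M(m_M^0(b)\,m_M^0(c))$, which closes the induction.
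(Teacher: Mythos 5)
Your proposal is correct and follows essentially the same route as the paper's proof: structural induction on $F_0FM$ with the same treatment of each generator, the same use of \cref{m0-1,m0-2,m0-3,m0-4,m0-5,m0-6}, \cref{d-1,d-2,d-4,d-6}, \cref{d0-1,d0-2,d0-3,d0-4,d0-5,d0-6}, and \cref{m-5} together with associativity/commutativity in $FM$ and naturality of $\sigma$ in the multiplication case. The only difference is presentational — you package the right-hand composite as an additive map $\Phi$ and argue at the morphism level, where the paper carries out the multiplication case explicitly in Sweedler notation (including the second-level expansions of ${}_{n}\mathsf{d}_M([v_{(2)}])$ and ${}_{n}\mathsf{d}_M([w_{(2)}])$), which is exactly the bookkeeping your sketch defers to.
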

\begin{proof}
By induction on $F_0FM$.
\begin{itemize}
\item We have
\sbox{\eqjustbox}{$(m_M \otimes {}_{n}\mathsf{d}_M) \text{ is}$}
\begin{align*}
({}_{n}\mathsf{d}_M \circ m_M^0)(0)\eqjust{}&~{}_{n}\mathsf{d}_M(m_M^0(0)) \\
\eqjust{\cref{m0-1}}&~{}_{n}\mathsf{d}_M([0]) \\
\eqjust{\cref{d-1}}&~0 \\
\eqjust{\substack{(\nabla_M \otimes \mathsf{id}_M)\,\circ \\ (m_M \otimes {}_{n}\mathsf{d}_M) \text{ is} \\ \text{a mon.~hom.}}}&~\Big((\nabla_M \otimes \mathsf{id}_M) \circ (m_M \otimes {}_{n}\mathsf{d}_M)\Big)(0) \\
\eqjust{\cref{d0-1}}&~\Big((\nabla_M \otimes \mathsf{id}_M) \circ (m_M \otimes {}_{n}\mathsf{d}_M)\Big)({}_{n}\mathsf{d}^0_{FM}(0)) \\
\eqjust{}&~\Big((\nabla_M \otimes \mathsf{id}_M) \circ (m_M \otimes {}_{n}\mathsf{d}_M) \circ {}_{n}\mathsf{d}^0_{FM}\Big)(0).
\end{align*}
\item We have
\begin{align*}
({}_{n}\mathsf{d}_M \circ m_M^0)(1)\eqjust{}&~{}_{n}\mathsf{d}_M(m_M^0(1)) \\
\eqjust{\cref{m0-2}}&~{}_{n}\mathsf{d}_M([1]) \\
\eqjust{\cref{d-2}}&~0 \\
\eqjust{\substack{(\nabla_M \otimes \mathsf{id}_M)\,\circ \\ (m_M \otimes {}_{n}\mathsf{d}_M) \text{ is} \\ \text{a mon.~hom.}}}&~\Big((\nabla_M \otimes \mathsf{id}_M) \circ (m_M \otimes {}_{n}\mathsf{d}_M)\Big)(0) \\
\eqjust{\cref{d0-2}}&~\Big((\nabla_M \otimes \mathsf{id}_M) \circ (m_M \otimes {}_{n}\mathsf{d}_M)\Big)({}_{n}\mathsf{d}^0_{FM}(1)) \\
\eqjust{}&~\Big((\nabla_M \otimes \mathsf{id}_M) \circ (m_M \otimes {}_{n}\mathsf{d}_M) \circ {}_{n}\mathsf{d}^0_{FM}\Big)(1).
\end{align*}
\item Let $[a] \in FM$. We have
\sbox{\eqjustbox}{\cref{d0-1}}
\begin{align*}
({}_{n}\mathsf{d}_M \circ m_M^0)(y_{[a]})\eqjust{}&~{}_{n}\mathsf{d}_M(m_M^0(y_{[a]})) \\
\eqjust{\cref{m0-3}}&~{}_{n}\mathsf{d}_M([a])
\end{align*}
and
\sbox{\eqjustbox}{\cref{d0-1}}
\begin{align*}
\Big((\nabla_M \otimes \mathsf{id}_M) \circ (m_M \otimes {}_{n}\mathsf{d}_M) \circ {}_{n}\mathsf{d}^0_{FM}\Big)(y_{[a]})\eqjust{}&~\Big((\nabla_M \otimes \mathsf{id}_M) \circ (m_M \otimes {}_{n}\mathsf{d}_M)\Big)( {}_{n}\mathsf{d}^0_{FM}(y_{[a]})) \\
\eqjust{\cref{d0-3}}&~\Big((\nabla_M \otimes \mathsf{id}_M) \circ (m_M \otimes {}_{n}\mathsf{d}_M)\Big)([1] \otimes [a]) \\
\eqjust{}&~(\nabla_M \otimes \mathsf{id}_M)\Big((m_M \otimes {}_{n}\mathsf{d}_M)([1] \otimes [a])\Big) \\
\eqjust{}&~(\nabla_M \otimes \mathsf{id}_M)(m_M([1]) \otimes {}_{n}\mathsf{d}_M([a])).
\end{align*}
Using Sweedler notation, write
\begin{align} \label{Sweed-for-chain-y}
{}_{n}\mathsf{d}_M([a])=[w_{(1)}] \otimes w_{(2)}.
\end{align}
We can then continue the previous calculation:
\sbox{\eqjustbox}{$\text{and }\cref{Sweed-for-chain-y}$}
\begin{align*}
\eqjust{\substack{\cref{m-2} \\ \text{and }\cref{Sweed-for-chain-y}}}&~(\nabla_M \otimes \mathsf{id}_M)([1] \otimes [w_{(1)}] \otimes w_{(2)}) \\
\eqjust{\cref{mult-monoid}}&~([1][w_{(1)}]) \otimes w_{(2)} \\
\eqjust{\cref{OP4}}&~[w_{(1)}] \otimes w_{(2)} \\
\eqjust{\cref{Sweed-for-chain-y}}&~{}_{n}\mathsf{d}_M([a]) \\
\eqjust{\cref{m0-3}}&~{}_{n}\mathsf{d}_M(m^0_M(y_{[a]})).
\end{align*}
We conclude that
\[
({}_{n}\mathsf{d}_M \circ m_M^0)(y_{[a]})=\Big((\nabla_M \otimes \mathsf{id}_M) \circ (m_M \otimes {}_{n}\mathsf{d}_M) \circ {}_{n}\mathsf{d}^0_{FM}\Big)(y_{[a]}).
\]
\item  Let $a,b \in F_0FM$ such that 
\begin{equation} \label{d0-chain-prod-hypo-1}
({}_{n}\mathsf{d}_M \circ m_M^0)(a)=\Big((\nabla_M \otimes \mathsf{id}_M) \circ (m_M \otimes {}_{n}\mathsf{d}_M) \circ {}_{n}\mathsf{d}^0_{FM}\Big)(a),
\end{equation}
\begin{equation} \label{d0-chain-prod-hypo-2}
({}_{n}\mathsf{d}_M \circ m_M^0)(b)=\Big((\nabla_M \otimes \mathsf{id}_M) \circ (m_M \otimes {}_{n}\mathsf{d}_M) \circ {}_{n}\mathsf{d}^0_{FM}\Big)(b).
\end{equation}
We write $\mathsf{d}_{FM}^0(a)$ and $\mathsf{d}_{FM}^0(b)$ in Sweedler notation:
\begin{equation} \label{d0-chain-prod-Sweed-1}
{}_{n}\mathsf{d}_{FM}^0(a)=[v_{(1)}] \otimes [v_{(2)}],
\end{equation}
\begin{equation} \label{d0-chain-prod-Sweed-2}
{}_{n}\mathsf{d}_{FM}^0(b)=[w_{(1)}] \otimes [w_{(2)}].
\end{equation}
We also write in Sweedler notation:
\begin{equation} \label{d0-chain-prod-Sweed-3}
{}_{n}\mathsf{d}_M([v_{(2)}])=[v_{(2)(1)}] \otimes v_{(2)(2)},
\end{equation}
\begin{equation} \label{d0-chain-prod-Sweed-4}
{}_{n}\mathsf{d}_M([w_{(2)}])=[w_{(2)(1)}] \otimes w_{(2)(2)}.
\end{equation}
We obtain
\sbox{\eqjustbox}{$\text{and }\cref{d0-chain-prod-Sweed-2}$}
\begin{align*}
&~({}_{n}\mathsf{d}_M \circ m_M^0)((ab)) \\
\eqjust{}&~{}_{n}\mathsf{d}_M(m_M^0((ab))) \\
\eqjust{\cref{m0-5}}&~{}_{n}\mathsf{d}_M\Big(m_M^0(a)m_M^0(b)\Big) \\
\eqjust{\cref{formula-m0}}&~{}_{n}\mathsf{d}_M\Big(m_M([a])m_M([b])\Big) \\
\eqjust{\cref{d-4}}&~(\nabla_M \otimes \mathsf{id}_M)(m_M([a]) \otimes {}_{n}\mathsf{d}_M(m_M([b])))+\Big((\nabla_M \otimes \mathsf{id}_M) \circ (\mathsf{id}_{FM} \otimes \sigma_{M,FM})\Big)({}_{n}\mathsf{d}_M(m_M([a])) \otimes m_M([b])) \\
\eqjust{\cref{formula-m0}}&~(\nabla_M \otimes \mathsf{id}_M)(m_M([a]) \otimes {}_{n}\mathsf{d}_M(m_M^0(b)))+\Big((\nabla_M \otimes \mathsf{id}_M) \circ (\mathsf{id}_{FM} \otimes \sigma_{M,FM})\Big)({}_{n}\mathsf{d}_M(m_M^0(a)) \otimes m_M([b])) \\
\eqjust{\text{prop.~of }\circ}&~(\nabla_M \otimes \mathsf{id}_M)(m_M([a]) \otimes ({}_{n}\mathsf{d}_M \circ m_M^0)(b)) \\
&+\Big((\nabla_M \otimes \mathsf{id}_M) \circ (\mathsf{id}_{FM} \otimes \sigma_{M,FM})\Big)\Big(({}_{n}\mathsf{d}_M \circ m_M^0)(a) \otimes m_M([b])\Big) \\
\eqjust{\substack{\cref{d0-chain-prod-hypo-1} \\ \text{and }\cref{d0-chain-prod-hypo-2}}}&~(\nabla_M \otimes \mathsf{id}_M)\Big(m_M([a]) \otimes \Big((\nabla_M \otimes \mathsf{id}_M) \circ (m_M \otimes {}_{n}\mathsf{d}_M) \circ {}_{n}\mathsf{d}^0_{FM}\Big)(b)\Big) \\
&+\Big((\nabla_M \otimes \mathsf{id}_M) \circ (\mathsf{id}_{FM} \otimes \sigma_{M,FM})\Big)\Big(\Big((\nabla_M \otimes \mathsf{id}_M) \circ (m_M \otimes {}_{n}\mathsf{d}_M) \circ {}_{n}\mathsf{d}^0_{FM}\Big)(a) \otimes m_M([b])\Big) \\
\eqjust{\text{prop.~of }\circ}&~(\nabla_M \otimes \mathsf{id}_M)\Big(m_M([a]) \otimes \Big(\Big((\nabla_M \otimes \mathsf{id}_M) \circ (m_M \otimes {}_{n}\mathsf{d}_M)\Big)({}_{n}\mathsf{d}^0_{FM}(b))\Big)\Big) \\
&+\Big((\nabla_M \otimes \mathsf{id}_M) \circ (\mathsf{id}_{FM} \otimes \sigma_{M,FM})\Big)\Big(\Big(\Big((\nabla_M \otimes \mathsf{id}_M) \circ (m_M \otimes {}_{n}\mathsf{d}_M)\Big)({}_{n}\mathsf{d}^0_{FM}(a))\Big) \otimes m_M([b])\Big) \\
\eqjust{\substack{\cref{d0-chain-prod-Sweed-1} \\ \text{and } \cref{d0-chain-prod-Sweed-2}}}&~(\nabla_M \otimes \mathsf{id}_M)\Big(m_M([a]) \otimes \Big(\Big((\nabla_M \otimes \mathsf{id}_M) \circ (m_M \otimes {}_{n}\mathsf{d}_M)\Big)([w_{(1)}] \otimes [w_{(2)}])\Big)\Big) \\
&+\Big((\nabla_M \otimes \mathsf{id}_M) \circ (\mathsf{id}_{FM} \otimes \sigma_{M,FM})\Big)\Big(\Big(\Big((\nabla_M \otimes \mathsf{id}_M) \circ (m_M \otimes {}_{n}\mathsf{d}_M)\Big)([v_{(1)}] \otimes [v_{(2)}])\Big) \otimes m_M([b])\Big) \\
\eqjust{\substack{\text{prop.~of }\\ \otimes \text{ and }\circ}}&~(\nabla_M \otimes \mathsf{id}_M)\Big(m_M([a]) \otimes \Big((\nabla_M \otimes \mathsf{id}_M)(m_M([w_{(1)}]) \otimes {}_{n}\mathsf{d}_M([w_{(2)}]))\Big)\Big) \\
&+\Big((\nabla_M \otimes \mathsf{id}_M) \circ (\mathsf{id}_{FM} \otimes \sigma_{M,FM})\Big)\Big(\Big((\nabla_M \otimes \mathsf{id}_M)(m_M([v_{(1)}]) \otimes {}_{n}\mathsf{d}_M([v_{(2)}]))\Big) \otimes m_M([b])\Big) \\
\eqjust{\substack{\cref{d0-chain-prod-Sweed-3} \\ \text{and } \cref{d0-chain-prod-Sweed-4}}}&~(\nabla_M \otimes \mathsf{id}_M)\Big(m_M([a]) \otimes \Big((\nabla_M \otimes \mathsf{id}_M)(m_M([w_{(1)}]) \otimes [w_{(2)(1)}] \otimes w_{(2)(2)})\Big)\Big) \\
&+\Big((\nabla_M \otimes \mathsf{id}_M) \circ (\mathsf{id}_{FM} \otimes \sigma_{M,FM})\Big)\Big(\Big((\nabla_M \otimes \mathsf{id}_M)(m_M([v_{(1)}]) \otimes [v_{(2)(1)}] \otimes v_{(2)(2)})\Big) \otimes m_M([b])\Big) \\
\eqjust{\cref{mult-monoid}}&~(\nabla_M \otimes \mathsf{id}_M)\Big(m_M([a]) \otimes (m_M([w_{(1)}])[w_{(2)(1)}]) \otimes w_{(2)(2)}\Big) \\
&+\Big((\nabla_M \otimes \mathsf{id}_M) \circ (\mathsf{id}_{FM} \otimes \sigma_{M,FM})\Big)\Big((m_M([v_{(1)}])[v_{(2)(1)}]) \otimes v_{(2)(2)} \otimes m_M([b])\Big) \\
\eqjust{\text{prop.~of }\sigma}&~(\nabla_M \otimes \mathsf{id}_M)\Big(m_M([a]) \otimes (m_M([w_{(1)}])[w_{(2)(1)}]) \otimes w_{(2)(2)}\Big) \\
&+(\nabla_M \otimes \mathsf{id}_M)((m_M([v_{(1)}])[v_{(2)(1)}]) \otimes m_{M}([b]) \otimes v_{(2)(2)}) \\
\eqjust{\cref{mult-monoid}}&~(m_M([a])(m_M([w_{(1)}])[w_{(2)(1)}])) \otimes w_{(2)(2)}+((m_M([v_{(1)}])[v_{(2)(1)}])m_{M}([b])) \otimes v_{(2)(2)} \\
\eqjust{\substack{\text{assoc.~from} \\ \text{Prop. }\ref{FM-is-a-rig}}}&~((m_M([a])m_M([w_{(1)}]))[w_{(2)(1)}]) \otimes w_{(2)(2)}+(m_M([v_{(1)}])([v_{(2)(1)}]m_{M}([b]))) \otimes v_{(2)(2)} \\
\eqjust{\substack{\text{comm.~from} \\ \text{Prop. }\ref{FM-is-a-rig}}}&~((m_M([a])m_M([w_{(1)}]))[w_{(2)(1)}]) \otimes w_{(2)(2)}+(m_M([v_{(1)}])(m_{M}([b])[v_{(2)(1)}])) \otimes v_{(2)(2)} \\
\eqjust{\substack{\text{assoc.~from} \\ \text{Prop. }\ref{FM-is-a-rig}}}&~((m_M([a])m_M([w_{(1)}]))[w_{(2)(1)}]) \otimes w_{(2)(2)}+((m_M([v_{(1)}])m_{M}([b]))[v_{(2)(1)}]) \otimes v_{(2)(2)} \\
\eqjust{\cref{m-5}}&~(m_M([aw_{(1)}])[w_{(2)(1)}]) \otimes w_{(2)(2)}+(m_M([v_{(1)}b])[v_{(2)(1)}]) \otimes v_{(2)(2)}.
\end{align*}
We also have
\sbox{\eqjustbox}{$(m_M \otimes {}_{n}\mathsf{d}_M) \text{ is}$}
\begin{align*}
&~\Big((\nabla_M \otimes \mathsf{id}_M) \circ (m_M \otimes {}_{n}\mathsf{d}_M) \circ {}_{n}\mathsf{d}^0_{FM}\Big)((ab)) \\
\eqjust{}&~\Big((\nabla_M \otimes \mathsf{id}_M) \circ (m_M \otimes {}_{n}\mathsf{d}_M)\Big)({}_{n}\mathsf{d}^0_{FM}((ab))) \\
\eqjust{\cref{d0-4}}&~\Big((\nabla_M \otimes \mathsf{id}_M) \circ (m_M \otimes {}_{n}\mathsf{d}_M)\Big)\Big((\nabla_{FM} \otimes \mathsf{id}_{FM})([a] \otimes {}_{n}\mathsf{d}_{FM}^0(b))\\
&+\Big((\nabla_{FM} \otimes \mathsf{id}_{FM}) \circ (\mathsf{id}_{FFM} \otimes \sigma_{FM,FFM})\Big)({}_{n}\mathsf{d}_{FM}^0(a) \otimes [b])\Big) \\
\eqjust{\substack{(\nabla_M \otimes \mathsf{id}_M)\,\circ \\ (m_M \otimes {}_{n}\mathsf{d}_M) \text{ is} \\ \text{a mon.~hom.}}}&~\Big((\nabla_M \otimes \mathsf{id}_M) \circ (m_M \otimes {}_{n}\mathsf{d}_M)\Big)\Big((\nabla_{FM} \otimes \mathsf{id}_{FM})([a] \otimes {}_{n}\mathsf{d}_{FM}^0(b))\Big) \\
&+\Big((\nabla_{M} \otimes \mathsf{id}_{M}) \circ (m_{M} \otimes {}_{n}\mathsf{d}_{M})\Big)\Big(\Big((\nabla_{FM} \otimes \mathsf{id}_{FM}) \circ (\mathsf{id}_{FFM} \otimes \sigma_{FM,FFM})\Big)({}_{n}\mathsf{d}_{FM}^0(a) \otimes [b])\Big) \\
\eqjust{\substack{\cref{d0-chain-prod-Sweed-1} \\ \text{and } \cref{d0-chain-prod-Sweed-2}}}&~\Big((\nabla_M \otimes \mathsf{id}_M) \circ (m_M \otimes {}_{n}\mathsf{d}_M)\Big)\Big((\nabla_{FM} \otimes \mathsf{id}_{FM})([a] \otimes [w_{(1)}] \otimes [w_{(2)}])\Big) \\
&+\Big((\nabla_{M} \otimes \mathsf{id}_{M}) \circ (m_{M} \otimes {}_{n}\mathsf{d}_{M})\Big)\Big(\Big((\nabla_{FM} \otimes \mathsf{id}_{FM}) \circ (\mathsf{id}_{FFM} \otimes \sigma_{FM,FFM})\Big)([v_{(1)}] \otimes [v_{(2)}] \otimes [b])\Big) \\
\eqjust{\substack{\cref{mult-monoid} \\ \text{ and }\cref{OP2}}}&~\Big((\nabla_M \otimes \mathsf{id}_M) \circ (m_M \otimes {}_{n}\mathsf{d}_M)\Big)([aw_{(1)}] \otimes [w_{(2)}]) \\
&+\Big((\nabla_{M} \otimes \mathsf{id}_{M}) \circ (m_{M} \otimes {}_{n}\mathsf{d}_{M})\Big)\Big((\nabla_{FM} \otimes \mathsf{id}_{FM})([v_{(1)}] \otimes [b] \otimes [v_{(2)}])\Big) \\
\eqjust{\substack{\cref{mult-monoid} \\ \text{ and }\cref{OP2}}}&~\Big((\nabla_M \otimes \mathsf{id}_M) \circ (m_M \otimes {}_{n}\mathsf{d}_M)\Big)([aw_{(1)}] \otimes [w_{(2)}])+\Big((\nabla_{M} \otimes \mathsf{id}_{M}) \circ (m_{M} \otimes {}_{n}\mathsf{d}_{M})\Big)([v_{(1)}b] \otimes [v_{(2)}]) \\
\eqjust{\substack{\text{prop.~of }\circ \\ \text{ and } \otimes}}&~(\nabla_M \otimes \mathsf{id}_M)(m_M([aw_{(1)}]) \otimes {}_{n}\mathsf{d}_M([w_{(2)}]))+(\nabla_M \otimes \mathsf{id}_M)(m_M([v_{(1)}b]) \otimes {}_{n}\mathsf{d}_M([v_{(2)}])) \\
\eqjust{\substack{\cref{d0-chain-prod-Sweed-3} \\ \text{and } \cref{d0-chain-prod-Sweed-4}}}&~(\nabla_M \otimes \mathsf{id}_M)(m_M([aw_{(1)}]) \otimes [w_{(2)(1)}] \otimes w_{(2)(2)})+(\nabla_M \otimes \mathsf{id}_M)(m_M([v_{(1)}b]) \otimes [v_{(2)(1)}] \otimes v_{(2)(2)}) \\
\eqjust{\cref{mult-monoid}}&~(m_M([aw_{(1)}])[w_{(2)(1)}]) \otimes w_{(2)(2)}+(m_M([v_{(1)}b])[v_{(2)(1)}]) \otimes v_{(2)(2)}.
\end{align*}
We conclude that
\[
({}_{n}\mathsf{d}_M \circ m_M^0)((ab))=\Big((\nabla_M \otimes \mathsf{id}_M) \circ (m_M \otimes {}_{n}\mathsf{d}_M) \circ {}_{n}\mathsf{d}^0_{FM}\Big)((ab)).
\]
\item Let $a,b \in F_0FM$ such that 
\[
({}_{n}\mathsf{d}_M \circ m_M^0)(a)=\Big((\nabla_M \otimes \mathsf{id}_M) \circ (m_M \otimes {}_{n}\mathsf{d}_M) \circ {}_{n}\mathsf{d}^0_{FM}\Big)(a),
\]
\[
({}_{n}\mathsf{d}_M \circ m_M^0)(b)=\Big((\nabla_M \otimes \mathsf{id}_M) \circ (m_M \otimes {}_{n}\mathsf{d}_M) \circ {}_{n}\mathsf{d}^0_{FM}\Big)(b).
\]
We obtain
\sbox{\eqjustbox}{\text{mon.~hom.}}
\begin{align*}
&~({}_{n}\mathsf{d}_M \circ m_M^0)((a+b)) \\
\eqjust{}&~{}_{n}\mathsf{d}_M(m_M^0((a+b))) \\
\eqjust{\cref{m0-4}}&~{}_{n}\mathsf{d}_M(m_M^0(a)+m_M^0(b)) \\
\eqjust{\substack{{}_n\mathsf{d}_M \text{ is a} \\ \text{mon.~hom.}}}&~{}_{n}\mathsf{d}_M(m_M^0(a))+{}_{n}\mathsf{d}_M(m_M^0(b)) \\
\eqjust{}&~({}_{n}\mathsf{d}_M \circ m_M^0)(a)+({}_{n}\mathsf{d}_M \circ m_M^0)(b) \\
\eqjust{}&~\Big((\nabla_M \otimes \mathsf{id}_M) \circ (m_M \otimes {}_{n}\mathsf{d}_M) \circ {}_{n}\mathsf{d}^0_{FM}\Big)(a)+\Big((\nabla_M \otimes \mathsf{id}_M) \circ (m_M \otimes {}_{n}\mathsf{d}_M) \circ {}_{n}\mathsf{d}^0_{FM}\Big)(b) \\
\eqjust{}&~\Big((\nabla_M \otimes \mathsf{id}_M) \circ (m_M \otimes {}_{n}\mathsf{d}_M)\Big)({}_{n}\mathsf{d}^0_{FM}(a))+\Big((\nabla_M \otimes \mathsf{id}_M) \circ (m_M \otimes {}_{n}\mathsf{d}_M)\Big)({}_{n}\mathsf{d}^0_{FM}(b)) \\
\eqjust{\substack{(\nabla_M \otimes \mathsf{id}_M)\,\circ \\ (m_M \otimes {}_{n}\mathsf{d}_M) \\ \text{is a mon.~hom.}}}&~\Big((\nabla_M \otimes \mathsf{id}_M) \circ (m_M \otimes {}_{n}\mathsf{d}_M)\Big)({}_{n}\mathsf{d}^0_{FM}(a)+{}_{n}\mathsf{d}^0_{FM}(b)) \\
\eqjust{\cref{d0-5}}&~\Big((\nabla_M \otimes \mathsf{id}_M) \circ (m_M \otimes {}_{n}\mathsf{d}_M)\Big)\Big({}_{n}\mathsf{d}^0_{FM}((a+b))\Big) \\
\eqjust{}&~\Big((\nabla_M \otimes \mathsf{id}_M) \circ (m_M \otimes {}_{n}\mathsf{d}_M) \circ {}_{n}\mathsf{d}^0_{FM}\Big)((a+b)).
\end{align*}
\item Let $a \in F_0FM$ such that
\[
({}_{n}\mathsf{d}_M \circ m_M^0)(a)=\Big((\nabla_M \otimes \mathsf{id}_M) \circ (m_M \otimes {}_{n}\mathsf{d}_M) \circ {}_{n}\mathsf{d}^0_{FM}\Big)(a).
\]
We obtain
\sbox{\eqjustbox}{\text{\cref{m0-6}}}
\begin{align*}
({}_{n}\mathsf{d}_M \circ m_M^0)(g(a))\eqjust{}&~{}_{n}\mathsf{d}_M(m_M^0(g(a))) \\
\eqjust{\cref{m0-6}}&~{}_{n}\mathsf{d}_M(\mathbf{f}(m_M^0(a))) \\
\eqjust{\cref{d-6}}&~n \cdot {}_{n}\mathsf{d}_M(m_M^0(a)).
\end{align*}
We also have
\sbox{\eqjustbox}{$\text{is a mon.~hom.}$}
\begin{align*}
\Big((\nabla_M \otimes \mathsf{id}_M) \circ (m_M \otimes {}_{n}\mathsf{d}_M) \circ {}_{n}\mathsf{d}^0_{FM}\Big)(g(a))\eqjust{}&~\Big((\nabla_M \otimes \mathsf{id}_M) \circ (m_M \otimes {}_{n}\mathsf{d}_M)\Big)({}_{n}\mathsf{d}^0_{FM}(g(a))) \\
\eqjust{\cref{d0-6}}&~\Big((\nabla_M \otimes \mathsf{id}_M) \circ (m_M \otimes {}_{n}\mathsf{d}_M)\Big)(n \cdot \mathsf{d}_{FM}^0(a)) \\
\eqjust{\substack{(\nabla_M \otimes \mathsf{id}_M)\,\circ \\ (m_M \otimes {}_{n}\mathsf{d}_M) \\ \text{is a mon.~hom.}}}&~n \cdot \Big((\nabla_M \otimes \mathsf{id}_M) \circ (m_M \otimes {}_{n}\mathsf{d}_M)\Big)(\mathsf{d}_{FM}^0(a)) \\
\eqjust{}&~n \cdot \Big((\nabla_M \otimes \mathsf{id}_M) \circ (m_M \otimes {}_{n}\mathsf{d}_M) \circ \mathsf{d}_{FM}^0\Big)(a) \\
\eqjust{}&~n \cdot \Big(({}_{n}\mathsf{d}_M \circ m_M^0)(a)\Big) \\
\eqjust{}&~n\cdot {}_{n}\mathsf{d}_M(m_M^0(a)). \\
\end{align*}
We conclude that
\[
({}_{n}\mathsf{d}_M \circ m_M^0)(g(a))=\Big((\nabla_M \otimes \mathsf{id}_M) \circ (m_M \otimes {}_{n}\mathsf{d}_M) \circ {}_{n}\mathsf{d}^0_{FM}\Big)(f(a)). \qedhere
\]
\end{itemize}
\end{proof}
\begin{corollary} \label{chain}
The chain rule is satisfied by ${}_{n}\mathsf{d}$ for every $n \in \mathbb{N}$.
\end{corollary}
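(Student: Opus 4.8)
The plan is to deduce the corollary directly from \cref{chain-d0} by descending the identity from the level of pre-terms in $F_0FM$ to the level of equivalence classes in $FFM$. Recall that the maps carrying a superscript $0$ are defined on pre-terms and are related to their quotient counterparts by $m_M^0(a) = m_M([a])$ (\cref{formula-m0}) and ${}_{n}\mathsf{d}_{FM}([a]) = {}_{n}\mathsf{d}^0_{FM}(a)$ (\cref{def-d}). Since $FFM = F_0FM/\sim$, every element of $FFM$ has the form $[a]$ for some $a \in F_0FM$, so it suffices to check that the two sides of the chain rule from \cref{def:deriving-transformation} agree after evaluation at an arbitrary such $[a]$.

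First, I would rewrite the left-hand side using \cref{formula-m0}, obtaining
\[
({}_{n}\mathsf{d}_M \circ m_M)([a]) = {}_{n}\mathsf{d}_M(m_M^0(a)) = ({}_{n}\mathsf{d}_M \circ m_M^0)(a).
\]
Next, I would apply \cref{chain-d0} to the right member and then rewrite ${}_{n}\mathsf{d}^0_{FM}(a)$ as ${}_{n}\mathsf{d}_{FM}([a])$ via \cref{def-d}, giving
\[
({}_{n}\mathsf{d}_M \circ m_M^0)(a) = \Big((\nabla_M \otimes \mathsf{id}_M) \circ (m_M \otimes {}_{n}\mathsf{d}_M)\Big)({}_{n}\mathsf{d}^0_{FM}(a)) = \Big((\nabla_M \otimes \mathsf{id}_M) \circ (m_M \otimes {}_{n}\mathsf{d}_M) \circ {}_{n}\mathsf{d}_{FM}\Big)([a]).
\]
Combining these two displays yields the chain rule evaluated at $[a]$, and since $[a]$ ranges over all of $FFM$, the two natural transformations coincide.

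The main obstacle has already been overcome in the long induction of \cref{chain-d0}, where the product-rule and self-map clauses of \cref{def:d0} force the nontrivial Sweedler-notation bookkeeping. At the level of the corollary, the only point requiring care is the faithful translation between the pre-term maps and their quotient counterparts, together with the observation that agreement on every equivalence class suffices — this last point being exactly the surjectivity of the quotient $F_0FM \to FFM$. No further computation is needed.
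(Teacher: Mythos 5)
Your proposal is correct and follows essentially the same route as the paper's own proof: evaluate both sides of the chain rule at an arbitrary class $[a] \in FFM$, translate to pre-terms via \cref{formula-m0}, invoke \cref{chain-d0}, and translate back via \cref{def-d}, with surjectivity of the quotient map $F_0FM \rightarrow FFM$ justifying that this suffices. No gaps.
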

\begin{proof}
Let $M$ be a commutative monoid and let $[a] \in FFM$. We have
\sbox{\eqjustbox}{$\text{Prop.~}\ref{chain-d0}$}
\begin{align*}
({}_{n}\mathsf{d}_M \circ m_M)([a])\eqjust{}&~{}_{n}\mathsf{d}_M(m_M([a])) \\
\eqjust{\cref{formula-m0}}&~{}_{n}\mathsf{d}_M(m_M^0(a)) \\
\eqjust{}&~({}_{n}\mathsf{d}_M \circ m_M^0)(a) \\
\eqjust{\text{Prop.~}\ref{chain-d0}}&~\Big((\nabla_M \otimes \mathsf{id}_M) \circ (m_M \otimes {}_{n}\mathsf{d}_M) \circ {}_{n}\mathsf{d}^0_{FM}\Big)(a) \\
\eqjust{}&~\Big((\nabla_M \otimes \mathsf{id}_M) \circ (m_M \otimes {}_{n}\mathsf{d}_M)\Big)({}_{n}\mathsf{d}^0_{FM}(a)) \\
\eqjust{\text{Def.~} \ref{def-d}}&~\Big((\nabla_M \otimes \mathsf{id}_M) \circ (m_M \otimes {}_{n}\mathsf{d}_M)\Big)({}_{n}\mathsf{d}_{FM}([a])) \\
\eqjust{}&~\Big((\nabla_M \otimes \mathsf{id}_M) \circ (m_M \otimes {}_{n}\mathsf{d}_M) \circ {}_{n}\mathsf{d}_{FM}\Big)([a]). \qedhere
\end{align*}
\end{proof}
\begin{proposition} \label{interchange-preterms}
Let $M$ be a commutative monoid. We have
\[
(\mathsf{id}_{FM} \otimes \sigma_{M,M}) \circ 
({}_{n}\mathsf{d}_M \otimes \mathsf{id}_M) \circ {}_{n}\mathsf{d}^0_M=({}_{n}\mathsf{d}_M \otimes \mathsf{id}_M) \circ {}_{n}\mathsf{d}^0_M.
\]
\end{proposition}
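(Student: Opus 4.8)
The plan is to prove the identity by induction on the structure of $F_0M$, in the same style as \cref{prop:d0-preserves-sim} and \cref{naturality-d-preterms}. Set $S := ({}_{n}\mathsf{d}_M \otimes \mathsf{id}_M) \circ {}_{n}\mathsf{d}^0_M$, a map $F_0M \to FM \otimes M \otimes M$; the goal is equivalent to showing that for every $a \in F_0M$ the element $S(a)$ is fixed by $\mathsf{id}_{FM} \otimes \sigma_{M,M}$. Since $S$ is a sum of monoid homomorphisms, the cases $0$ and $1$ are immediate: $S(0)=S(1)=0$ by \cref{d0-1} and \cref{d0-2}. The essential base case is $x_m$. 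Here ${}_{n}\mathsf{d}^0_M(x_m)=[1]\otimes m$ by \cref{d0-3}, so $S(x_m)={}_{n}\mathsf{d}_M([1])\otimes m = 0$, because ${}_{n}\mathsf{d}_M([1])=0$ by \cref{d-2}. Thus the ``second derivative'' of a variable vanishes, and the claim holds trivially at $x_m$. The inductive steps for sums and for $f$ are routine: using \cref{d0-5} and the additivity of all maps involved, $S((a+b))=S(a)+S(b)$, so the two inductive hypotheses give the result; and using \cref{d0-6}, $S(f(a))=n\cdot S(a)$, which is fixed by $\mathsf{id}_{FM}\otimes\sigma_{M,M}$ whenever $S(a)$ is.

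The only real work is the product case $(ab)$, and this is where I expect the main obstacle. I would introduce Sweedler notation ${}_{n}\mathsf{d}^0_M(a)=[a_{(1)}]\otimes a_{(2)}$ and ${}_{n}\mathsf{d}^0_M(b)=[b_{(1)}]\otimes b_{(2)}$, together with the second-level expansions ${}_{n}\mathsf{d}_M([a_{(1)}])=[a_{(1)(1)}]\otimes a_{(1)(2)}$ and ${}_{n}\mathsf{d}_M([b_{(1)}])=[b_{(1)(1)}]\otimes b_{(1)(2)}$. First, \cref{d0-4} rewrites ${}_{n}\mathsf{d}^0_M((ab))$ as $[ab_{(1)}]\otimes b_{(2)}+[a_{(1)}b]\otimes a_{(2)}$. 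Applying ${}_{n}\mathsf{d}_M\otimes\mathsf{id}_M$ and expanding ${}_{n}\mathsf{d}_M$ on each product $[ab_{(1)}]$ and $[a_{(1)}b]$ by the product rule \cref{d-4} (and using the associativity and commutativity of $FM$ from \cref{FM-is-a-rig} to tidy the $FM$-factors) yields the four-term expression
\[
S((ab)) = [ab_{(1)(1)}]\otimes b_{(1)(2)}\otimes b_{(2)} + [a_{(1)}b_{(1)}]\otimes a_{(2)}\otimes b_{(2)} + [a_{(1)}b_{(1)}]\otimes b_{(2)}\otimes a_{(2)} + [a_{(1)(1)}b]\otimes a_{(1)(2)}\otimes a_{(2)}.
\]

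To finish, I would split these four summands into two pairs. The two middle ``mixed'' terms are interchanged by $\mathsf{id}_{FM}\otimes\sigma_{M,M}$ (it simply swaps $a_{(2)}\otimes b_{(2)}$ and $b_{(2)}\otimes a_{(2)}$), so their sum is fixed by the swap. The first and last ``pure'' terms are handled by the inductive hypotheses for $b$ and $a$ respectively: the first term is the image of $({}_{n}\mathsf{d}_M\otimes\mathsf{id}_M)({}_{n}\mathsf{d}^0_M(b))=[b_{(1)(1)}]\otimes b_{(1)(2)}\otimes b_{(2)}$ under left multiplication of the $FM$-factor by $[a]$, i.e.\ under $(\nabla_M\otimes\mathsf{id}_M\otimes\mathsf{id}_M)([a]\otimes-)$, and the last term is the analogous image of $S(a)$ under multiplication by $[b]$. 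Because these multiplication maps act only on the $FM$-factor, they commute with $\mathsf{id}_{FM}\otimes\sigma_{M,M}$, which acts only on the two $M$-factors; and the inductive hypotheses assert exactly that $({}_{n}\mathsf{d}_M\otimes\mathsf{id}_M)({}_{n}\mathsf{d}^0_M(b))$ and $S(a)$ are fixed by $\mathsf{id}\otimes\sigma$. Hence both pure terms are fixed as well, so all four summands are preserved and $S((ab))$ is fixed by $\mathsf{id}_{FM}\otimes\sigma_{M,M}$, completing the induction. The delicate point throughout is the Sweedler bookkeeping of the four terms and the observation that the $FM$-multiplications commute past $\sigma_{M,M}$, which is what lets the inductive hypotheses on $a$ and $b$ close the argument.
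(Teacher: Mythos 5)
Your proposal is correct and follows essentially the same route as the paper's proof: induction on $F_0M$, with the only substantive case being the product, where you arrive at the same four-term Sweedler expansion as \cref{d0-interchange-prod-main-2} and dispose of it in the same way --- the two mixed terms are exchanged by $\mathsf{id}_{FM}\otimes\sigma_{M,M}$, while the two pure terms are fixed by the inductive hypotheses transported along multiplication of the $FM$-factor, which is exactly how the paper derives \cref{Sweed1,Sweed2}. Your packaging of that last step as the observation that the multiplication maps act only on the $FM$-factor and hence commute with the swap is just a slightly more abstract phrasing of the paper's explicit tensor-and-apply-$\nabla_M$ manipulation.
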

\begin{proof}
By induction on $F_0$M.
\begin{itemize}
\item We have
\sbox{\eqjustbox}{$(\mathsf{id}_{FM} \otimes \sigma_{M,M})\,\circ$}
\begin{align*}
\Big((\mathsf{id}_{FM} \otimes \sigma_{M,M}) \circ 
({}_{n}\mathsf{d}_M \otimes \mathsf{id}_M) \circ {}_{n}\mathsf{d}^0_M\Big)(0)\eqjust{}&~\Big((\mathsf{id}_{FM} \otimes \sigma_{M,M}) \circ 
({}_{n}\mathsf{d}_M \otimes \mathsf{id}_M)\Big)({}_{n}\mathsf{d}_M^0(0)) \\
\eqjust{\cref{d0-1}}&~\Big((\mathsf{id}_{FM} \otimes \sigma_{M,M}) \circ 
({}_{n}\mathsf{d}_M \otimes \mathsf{id}_M)\Big)(0) \\
\eqjust{\substack{(\mathsf{id}_{FM} \otimes \sigma_{M,M})\,\circ \\
({}_{n}\mathsf{d}_M \otimes \mathsf{id}_M) \text{ is} \\ \text{a mon.~hom.}}}&~0 \\
\eqjust{\substack{({}_{n}\mathsf{d}_M \otimes \mathsf{id}_M) \text{ is} \\ \text{a mon.~hom.}}}&~({}_{n}\mathsf{d}_M \otimes \mathsf{id}_M)(0) \\
\eqjust{\cref{d0-1}}&~({}_{n}\mathsf{d}_M \otimes \mathsf{id}_M)({}_{n}\mathsf{d}_M^0(0)) \\
\eqjust{}&~\Big(({}_{n}\mathsf{d}_M \otimes \mathsf{id}_M) \circ {}_{n}\mathsf{d}^0_M\Big)(0).
\end{align*}
\item We have
\sbox{\eqjustbox}{$(\mathsf{id}_{FM} \otimes \sigma_{M,M})$}
\begin{align*}
\Big((\mathsf{id}_{FM} \otimes \sigma_{M,M}) \circ 
({}_{n}\mathsf{d}_M \otimes \mathsf{id}_M) \circ {}_{n}\mathsf{d}^0_M\Big)(1)\eqjust{}&~\Big((\mathsf{id}_{FM} \otimes \sigma_{M,M}) \circ 
({}_{n}\mathsf{d}_M \otimes \mathsf{id}_M)\Big)({}_{n}\mathsf{d}_M^0(1)) \\
\eqjust{\cref{d0-2}}&~\Big((\mathsf{id}_{FM} \otimes \sigma_{M,M}) \circ 
({}_{n}\mathsf{d}_M \otimes \mathsf{id}_M)\Big)(0) \\
\eqjust{\substack{(\mathsf{id}_{FM} \otimes \sigma_{M,M})\,\circ \\
({}_{n}\mathsf{d}_M \otimes \mathsf{id}_M) \text{ is} \\ \text{a mon.~hom.}}}&~0 \\
\eqjust{\substack{({}_{n}\mathsf{d}_M \otimes \mathsf{id}_M) \text{ is} \\ \text{a mon.~hom.}}}&~({}_{n}\mathsf{d}_M \otimes \mathsf{id}_M)(0) \\
\eqjust{\cref{d0-2}}&~({}_{n}\mathsf{d}_M \otimes \mathsf{id}_M)({}_{n}\mathsf{d}_M^0(1)) \\
\eqjust{}&~\Big(({}_{n}\mathsf{d}_M \otimes \mathsf{id}_M) \circ {}_{n}\mathsf{d}^0_M\Big)(1).
\end{align*}
\item Let $m \in M$. We have
\sbox{\eqjustbox}{$\text{is a mon.~hom.}$}
\begin{align*}
\Big((\mathsf{id}_{FM} \otimes \sigma_{M,M}) \circ 
({}_{n}\mathsf{d}_M \otimes \mathsf{id}_M) \circ {}_{n}\mathsf{d}^0_M\Big)(x_m)\eqjust{}&~\Big((\mathsf{id}_{FM} \otimes \sigma_{M,M}) \circ 
({}_{n}\mathsf{d}_M \otimes \mathsf{id}_M)\Big)({}_{n}\mathsf{d}^0_M(x_m)) \\
\eqjust{\cref{d0-3}}&~\Big((\mathsf{id}_{FM} \otimes \sigma_{M,M}) \circ 
({}_{n}\mathsf{d}_M \otimes \mathsf{id}_M)\Big)([1] \otimes m) \\
\eqjust{}&~(\mathsf{id}_{FM} \otimes \sigma_{M,M})\Big(({}_{n}\mathsf{d}_M \otimes \mathsf{id}_M)([1] \otimes m)\Big) \\
\eqjust{}&~(\mathsf{id}_{FM} \otimes \sigma_{M,M})({}_{n}\mathsf{d}_M([1]) \otimes \mathsf{id}_M(m)) \\
\eqjust{\cref{d-2}}&~(\mathsf{id}_{FM} \otimes \sigma_{M,M})(0 \otimes m) \\
\eqjust{}&~(\mathsf{id}_{FM} \otimes \sigma_{M,M})(0) \\
\eqjust{\substack{(\mathsf{id}_{FM} \otimes \sigma_{M,M}) \\ \text{ is a mon.~hom.}}}&~0 \\
\eqjust{}&~0 \otimes m \\
\eqjust{\cref{d-2}}&~{}_{n}\mathsf{d}_M([1]) \otimes \mathsf{id}_M(m) \\
\eqjust{}&~({}_{n}\mathsf{d}_M \otimes \mathsf{id}_M)([1] \otimes m) \\
\eqjust{\cref{d-3}}&~({}_{n}\mathsf{d}_M \otimes \mathsf{id}_M)({}_{n}\mathsf{d}_M^0(x_m)) \\
\eqjust{}&~\Big(({}_{n}\mathsf{d}_M \otimes \mathsf{id}_M) \circ {}_{n}\mathsf{d}_M^0\Big)(x_m).
\end{align*}
\item Let $a,b \in F_0M$. Suppose that
\begin{equation} \label{d0-interchange-prod-hypo-1}
\Big((\mathsf{id}_{FM} \otimes \sigma_{M,M}) \circ 
({}_{n}\mathsf{d}_M \otimes \mathsf{id}_M) \circ {}_{n}\mathsf{d}^0_M\Big)(a)=
\Big(({}_{n}\mathsf{d}_M \otimes \mathsf{id}_M) 
\circ {}_{n}
\mathsf{d}_M^0
\Big)(a),
\end{equation}
\begin{equation} \label{d0-interchange-prod-hypo-2}
\Big((\mathsf{id}_{FM} \otimes \sigma_{M,M}) \circ 
({}_{n}\mathsf{d}_M \otimes \mathsf{id}_M) \circ {}_{n}\mathsf{d}^0_M\Big)(b)=
\Big(({}_{n}\mathsf{d}_M \otimes \mathsf{id}_M)
\circ {}_{n}
\mathsf{d}_M^0
\Big)(b).
\end{equation}
We have
\sbox{\eqjustbox}{$\text{a mon.~hom.}$}
\begin{align} \label{d0-interchange-prod-main-1}
&~\Big(
({}_{n}\mathsf{d}_M \otimes \mathsf{id}_M) \circ {}_{n}\mathsf{d}^0_M\Big)((ab)) \notag \\
\eqjust{}&~
({}_{n}\mathsf{d}_M \otimes \mathsf{id}_M)({}_{n}\mathsf{d}^0_M((ab))) \notag \\
\eqjust{\cref{d0-4}}&~
({}_{n}\mathsf{d}_M \otimes \mathsf{id}_M)\Big((\nabla_M \otimes \mathsf{id}_M)([a] \otimes {}_{n}\mathsf{d}_M^0(b))+\Big((\nabla_M \otimes \mathsf{id}_M) \circ (\mathsf{id}_{FM} \otimes \sigma_{M,FM})\Big)({}_{n}\mathsf{d}_M^0(a) \otimes [b])\Big) \notag \\
\eqjust{\substack{{}_{n}\mathsf{d}_M \otimes \mathsf{id}_M \text{ is} \\ \text{a mon.~hom.}}}&~({}_{n}\mathsf{d}_M \otimes \mathsf{id}_M)\Big((\nabla_M \otimes \mathsf{id}_M)([a] \otimes {}_{n}\mathsf{d}_M^0(b))\Big) \notag \\
&+ ({}_{n}\mathsf{d}_M \otimes \mathsf{id}_M)\Big(\Big((\nabla_M \otimes \mathsf{id}_M) \circ (\mathsf{id}_{FM} \otimes \sigma_{M,FM})\Big)({}_{n}\mathsf{d}_M^0(a) \otimes [b])\Big).
\end{align}
We pause our computation in order to introduce some equations in Sweedler notation. We first write
\[
{}_{n}\mathsf{d}_M^0(a)=[a_{(1)}] \otimes a_{(2)},
\]
\[
{}_{n}\mathsf{d}_M^0(b)=[b_{(1)}] \otimes b_{(2)}.
\]
We then have, using \cref{def-d}:
\begin{align} \label{d0-interchange-prod-Sweed-1}
{}_{n}\mathsf{d}_M([a])=[a_{(1)}] \otimes a_{(2)},
\end{align}
\begin{align} \label{d0-interchange-prod-Sweed-2}
{}_{n}\mathsf{d}_M([b])=[b_{(1)}] \otimes b_{(2)}.
\end{align}
We also write ${}_{n}\mathsf{d}_M([a_{(1)}])$ and ${}_{n}\mathsf{d}_M([b_{(1)}])$ in Sweedler notation:
\begin{align} \label{d0-interchange-prod-Sweed-3}
{}_{n}\mathsf{d}_M([a_{(1)}])=[a_{(1)(1)}] \otimes a_{(1)(2)},
\end{align}
\begin{align} \label{d0-interchange-prod-Sweed-4}
{}_{n}\mathsf{d}_M([b_{(1)}])=[b_{(1)(1)}] \otimes b_{(1)(2)}.
\end{align}
Using \cref{d0-interchange-prod-Sweed-1,d0-interchange-prod-Sweed-2,d0-interchange-prod-Sweed-3,d0-interchange-prod-Sweed-4}, we write \cref{d0-interchange-prod-hypo-1,d0-interchange-prod-hypo-2} in Sweedler notation:
\[
[a_{(1)(1)}] \otimes a_{(2)} \otimes a_{(1)(2)}=[a_{(1)(1)}] \otimes a_{(1)(2)} \otimes a_{(2)},
\]
\[
[b_{(1)(1)}] \otimes b_{(2)} \otimes b_{(1)(2)}=[b_{(1)(1)}] \otimes b_{(1)(2)} \otimes b_{(2)}.
\]
We thus have
\[
[a_{(1)(1)}] \otimes a_{(2)} \otimes a_{(1)(2)} \otimes [b]=[a_{(1)(1)}] \otimes a_{(1)(2)} \otimes a_{(2)} \otimes [b],
\]
\[
[a] \otimes [b_{(1)(1)}] \otimes b_{(2)} \otimes b_{(1)(2)}=[a] \otimes [b_{(1)(1)}] \otimes b_{(1)(2)} \otimes b_{(2)}.
\]
Applying a permutation to the first identity, we obtain
\[
[a_{(1)(1)}] \otimes [b] \otimes a_{(2)} \otimes a_{(1)(2)}=[a_{(1)(1)}] \otimes [b] \otimes a_{(1)(2)} \otimes a_{(2)},
\]
\[
[a] \otimes [b_{(1)(1)}] \otimes b_{(2)} \otimes b_{(1)(2)}=[a] \otimes [b_{(1)(1)}] \otimes b_{(1)(2)} \otimes b_{(2)}.
\]
Applying $\nabla_{M} \otimes \mathsf{id}_{M \otimes M}$, then using \cref{mult-monoid,OP2} gives
\begin{equation} \label{Sweed1}
[a_{(1)(1)}b] \otimes a_{(2)} \otimes a_{(1)(2)}=[a_{(1)(1)}b] \otimes a_{(1)(2)} \otimes a_{(2)},
\end{equation}
\begin{equation} \label{Sweed2}
[ab_{(1)(1)}] \otimes b_{(2)} \otimes b_{(1)(2)}=[ab_{(1)(1)}] \otimes b_{(1)(2)} \otimes b_{(2)}.
\end{equation}
We now resume the computation paused at \cref{d0-interchange-prod-main-1}:
\sbox{\eqjustbox}{$\cref{d0-interchange-prod-Sweed-1}\text{ to } \cref{d0-interchange-prod-Sweed-4}$}
\begin{align} \label{d0-interchange-prod-main-2}
&~({}_{n}\mathsf{d}_M \otimes \mathsf{id}_M)\Big((\nabla_M \otimes \mathsf{id}_M)([a] \otimes {}_{n}\mathsf{d}_M^0(b))\Big) \notag \\
&+({}_{n}\mathsf{d}_M \otimes \mathsf{id}_M)\Big(\Big((\nabla_M \otimes \mathsf{id}_M) \circ (\mathsf{id}_{FM} \otimes \sigma_{M,FM})\Big)({}_{n}\mathsf{d}_M^0(a) \otimes [b])\Big) \notag \\
\eqjust{\substack{\cref{d0-interchange-prod-Sweed-1} \\ \text{and } \cref{d0-interchange-prod-Sweed-2}}}&~
({}_{n}\mathsf{d}_M \otimes \mathsf{id}_M)\Big((\nabla_M \otimes \mathsf{id}_M)([a] \otimes [b_{(1)}] \otimes b_{(2)})\Big) \notag \\
&+({}_{n}\mathsf{d}_M \otimes \mathsf{id}_M)\Big(\Big((\nabla_M \otimes \mathsf{id}_M) \circ (\mathsf{id}_{FM} \otimes \sigma_{M,FM})\Big)([a_{(1)}] \otimes a_{(2)} \otimes [b])\Big) \notag \\
\eqjust{}&~
({}_{n}\mathsf{d}_M \otimes \mathsf{id}_M)\Big((\nabla_M \otimes \mathsf{id}_M)([a] \otimes [b_{(1)}] \otimes b_{(2)})\Big)+
({}_{n}\mathsf{d}_M \otimes \mathsf{id}_M)\Big((\nabla_M \otimes \mathsf{id}_M)([a_{(1)}] \otimes [b] \otimes a_{(2)})\Big) \notag \\
\eqjust{\cref{mult-monoid}}&~
({}_{n}\mathsf{d}_M \otimes \mathsf{id}_M)\Big(([a][b_{(1)}]) \otimes b_{(2)}\Big)+
({}_{n}\mathsf{d}_M \otimes \mathsf{id}_M)\Big(([a_{(1)}][b]) \otimes a_{(2)}\Big) \notag \\
\eqjust{}&~{}_{n}\mathsf{d}_M([a][b_{(1)}])\otimes b_{(2)}+{}_{n}\mathsf{d}_M([a_{(1)}][b])\otimes a_{(2)} \notag \\
\eqjust{\cref{d-4}}&~\Big((\nabla_M \otimes \mathsf{id}_M)([a] \otimes {}_{n}\mathsf{d}_M([b_{(1)}]))+\Big((\nabla_M \otimes \mathsf{id}_M) \circ (\mathsf{id}_{FM} \otimes \sigma_{M,FM})\Big)({}_{n}\mathsf{d}_M([a]) \otimes [b_{(1)}])\Big)\otimes b_{(2)} \notag \\
&+\Big((\nabla_M \otimes \mathsf{id}_M)([a_{(1)}] \otimes {}_{n}\mathsf{d}_M([b]))+\Big((\nabla_M \otimes \mathsf{id}_M) \circ (\mathsf{id}_{FM} \otimes \sigma_{M,FM})\Big)({}_{n}\mathsf{d}_M([a_{(1)}]) \otimes [b])\Big)\otimes a_{(2)} \notag \\
\eqjust{\cref{d0-interchange-prod-Sweed-1}\text{ to } \cref{d0-interchange-prod-Sweed-4}}&\Big((\nabla_M \otimes \mathsf{id}_M)([a] \otimes [b_{(1)(1)}] \otimes b_{(1)(2)}) \notag \\
&+\Big(\nabla_M \otimes \mathsf{id}_M) \circ (\mathsf{id}_{FM} \otimes \sigma_{M,FM})\Big)([a_{(1)}] \otimes a_{(2)} \otimes [b_{(1)}])\Big)\otimes b_{(2)} \notag \\
&+\Big((\nabla_M \otimes \mathsf{id}_M)([a_{(1)}] \otimes [b_{(1)}] \otimes b_{(2)}) \notag \\
&+\Big((\nabla_M \otimes \mathsf{id}_M) \circ (\mathsf{id}_{FM} \otimes \sigma_{M,FM})\Big)([a_{(1)(1)}] \otimes a_{(1)(2)} \otimes [b])\Big)\otimes a_{(2)} \notag \\
\eqjust{\substack{\cref{mult-monoid} \\ \text{and }\cref{OP2}}}&~\Big([ab_{(1)(1)}] \otimes b_{(1)(2)}+[a_{(1)}b_{(1)}]\otimes a_2\Big) \otimes b_{(2)}+\Big([a_{(1)}b_{(1)}] \otimes b_{(2)}+[a_{(1)(1)}b]\otimes a_{(1)(2)}\Big) \otimes a_{(2)} \notag \\
\eqjust{}&~[ab_{(1)(1)}] \otimes b_{(1)(2)}\otimes b_{(2)}+[a_{(1)}b_{(1)}]\otimes a_2\otimes b_{(2)}+[a_{(1)}b_{(1)}] \otimes b_{(2)}\otimes a_{(2)}+[a_{(1)(1)}b]\otimes a_{(1)(2)} \otimes a_{(2)}.
\end{align}
We finally use our previous computations to obtain the desired identity:
\sbox{\eqjustbox}{$\text{comm.~of }+$}
\begin{align*}
&~\Big((\mathsf{id}_{FM} \otimes \sigma_{M,M})\circ ({}_{n}\mathsf{d}_M \otimes \mathsf{id}_M) \circ {}_{n}\mathsf{d}^0_M\Big)((ab)) \\
\eqjust{}&~(\mathsf{id}_{FM} \otimes \sigma_{M,M})\Big(\Big(
({}_{n}\mathsf{d}_M \otimes \mathsf{id}_M) \circ {}_{n}\mathsf{d}^0_M\Big)((ab))\Big) \\
\eqjust{\substack{\cref{d0-interchange-prod-main-1} \\ \text{and }\cref{d0-interchange-prod-main-2}}}&~(\mathsf{id}_{FM} \otimes \sigma_{M,M})\Big([ab_{(1)(1)}] \otimes b_{(1)(2)}\otimes b_{(2)}+[a_{(1)}b_{(1)}]\otimes a_2\otimes b_{(2)} \\
&+[a_{(1)}b_{(1)}] \otimes b_{(2)}\otimes a_{(2)}+[a_{(1)(1)}b]\otimes a_{(1)(2)} \otimes a_{(2)}\Big) \\
\eqjust{}&~[ab_{(1)(1)}]\otimes b_{(2)} \otimes b_{(1)(2)}+[a_{(1)}b_{(1)}]\otimes b_{(2)}\otimes a_2+[a_{(1)}b_{(1)}]\otimes a_{(2)} \otimes b_{(2)}+[a_{(1)(1)}b] \otimes a_{(2)}\otimes a_{(1)(2)}. \\
\eqjust{\substack{\cref{Sweed1} \\ \text{and }\cref{Sweed2}}}&~[ab_{(1)(1)}] \otimes b_{(1)(2)} \otimes b_{(2)}+[a_{(1)}b_{(1)}]\otimes b_{(2)}\otimes a_2+[a_{(1)}b_{(1)}]\otimes a_{(2)} \otimes b_{(2)}+[a_{(1)(1)}b] \otimes a_{(1)(2)} \otimes a_{(2)} \\
\eqjust{\text{comm.~of }+}&~[ab_{(1)(1)}] \otimes b_{(1)(2)} \otimes b_{(2)}+[a_{(1)}b_{(1)}]\otimes a_{(2)} \otimes b_{(2)}+[a_{(1)}b_{(1)}]\otimes b_{(2)}\otimes a_2+[a_{(1)(1)}b] \otimes a_{(1)(2)} \otimes a_{(2)} \\
\eqjust{\substack{\cref{d0-interchange-prod-main-1} \\ \text{and }\cref{d0-interchange-prod-main-2}}}&~\Big(({}_{n}\mathsf{d}_M \otimes \mathsf{id}_M) \circ {}_{n}\mathsf{d}^0_M\Big)((ab)).
\end{align*}
\item Let $a,b \in F_0M$. Suppose that
\[
\Big((\mathsf{id}_{FM} \otimes \sigma_{M,M}) \circ 
({}_{n}\mathsf{d}_M \otimes \mathsf{id}_M) \circ {}_{n}\mathsf{d}^0_M\Big)(a)=\Big(({}_{n}\mathsf{d}_M \otimes \mathsf{id}_M) \circ {}_{n}\mathsf{d}_M^0\Big)(a)
\]
and that
\[
\Big((\mathsf{id}_{FM} \otimes \sigma_{M,M}) \circ 
({}_{n}\mathsf{d}_M \otimes \mathsf{id}_M) \circ {}_{n}\mathsf{d}^0_M\Big)(b)=\Big(({}_{n}\mathsf{d}_M \otimes \mathsf{id}_M) \circ {}_{n}\mathsf{d}_M^0\Big)(b).
\]
We obtain
\sbox{\eqjustbox}{$(\mathsf{id}_{FM} \otimes \sigma_{M,M})\,\circ$}
\begin{align*}
&~\Big((\mathsf{id}_{FM} \otimes \sigma_{M,M}) \circ 
({}_{n}\mathsf{d}_M \otimes \mathsf{id}_M) \circ {}_{n}\mathsf{d}^0_M\Big)((a+b)) \\
\eqjust{}&~\Big((\mathsf{id}_{FM} \otimes \sigma_{M,M}) \circ 
({}_{n}\mathsf{d}_M \otimes \mathsf{id}_M)\Big)\Big({}_{n}\mathsf{d}^0_M((a+b))\Big) \\
\eqjust{\cref{d0-5}}&~\Big((\mathsf{id}_{FM} \otimes \sigma_{M,M}) \circ 
({}_{n}\mathsf{d}_M \otimes \mathsf{id}_M)\Big)({}_{n}\mathsf{d}^0_M(a)+{}_{n}\mathsf{d}^0_M(b)) \\
\eqjust{\substack{(\mathsf{id}_{FM} \otimes \sigma_{M,M})\,\circ \\ ({}_{n}\mathsf{d}_M \otimes \mathsf{id}_M) \text{ is} \\ \text{a mon.~hom.}}}&~\Big((\mathsf{id}_{FM} \otimes \sigma_{M,M}) \circ 
({}_{n}\mathsf{d}_M \otimes \mathsf{id}_M)\Big)({}_{n}\mathsf{d}^0_M(a)) \\
&+~\Big((\mathsf{id}_{FM} \otimes \sigma_{M,M}) \circ 
({}_{n}\mathsf{d}_M \otimes \mathsf{id}_M)\Big)({}_{n}\mathsf{d}^0_M(b)) \\
\eqjust{}&~\Big((\mathsf{id}_{FM} \otimes \sigma_{M,M}) \circ 
({}_{n}\mathsf{d}_M \otimes \mathsf{id}_M) \circ {}_{n}\mathsf{d}^0_M\Big)(a) \\
&+\Big((\mathsf{id}_{FM} \otimes \sigma_{M,M}) \circ 
({}_{n}\mathsf{d}_M \otimes \mathsf{id}_M) \circ {}_{n}\mathsf{d}^0_M\Big)(b) \\
\eqjust{}&~\Big(({}_{n}\mathsf{d}_M \otimes \mathsf{id}_M) \circ {}_{n}\mathsf{d}_M^0\Big)(a)+\Big(({}_{n}\mathsf{d}_M \otimes \mathsf{id}_M) \circ {}_{n}\mathsf{d}_M^0\Big)(b) \\
\eqjust{}&~({}_{n}\mathsf{d}_M \otimes \mathsf{id}_M)({}_{n}\mathsf{d}_M^0(a))+({}_{n}\mathsf{d}_M \otimes \mathsf{id}_M)({}_{n}\mathsf{d}_M^0(b)) \\
\eqjust{\substack{{}_{n}\mathsf{d}_M \otimes \mathsf{id}_M
 \text{ is} \\ \text{a~mon.~hom.}}}&~({}_{n}\mathsf{d}_M \otimes \mathsf{id}_M)({}_{n}\mathsf{d}_M^0(a)+\mathsf{d}_M^0(b)) \\
\eqjust{\cref{d0-5}}&~({}_{n}\mathsf{d}_M \otimes \mathsf{id}_M)\Big({}_{n}\mathsf{d}_M^0((a+b))\Big) \\
\eqjust{}&~\Big(({}_{n}\mathsf{d}_M \otimes \mathsf{id}_M) \circ {}_{n}\mathsf{d}_M^0\Big)((a+b)).
\end{align*}
\item Let $a \in F_0M$. Suppose that
\[
\Big((\mathsf{id}_{FM} \otimes \sigma_{M,M}) \circ 
({}_{n}\mathsf{d}_M \otimes \mathsf{id}_M) \circ {}_{n}\mathsf{d}^0_M\Big)(a)=\Big(({}_{n}\mathsf{d}_M \otimes \mathsf{id}_M) \circ {}_{n}\mathsf{d}_M^0\Big)(a).
\]
We obtain
\sbox{\eqjustbox}{$(\mathsf{id}_{FM} \otimes \sigma_{M,M})\,\circ$}
\begin{align*}
\Big((\mathsf{id}_{FM} \otimes \sigma_{M,M}) \circ 
({}_{n}\mathsf{d}_M \otimes \mathsf{id}_M) \circ {}_{n}\mathsf{d}^0_M\Big)(f(a))\eqjust{}&~\Big((\mathsf{id}_{FM} \otimes \sigma_{M,M}) \circ 
({}_{n}\mathsf{d}_M \otimes \mathsf{id}_M)\Big)({}_{n}\mathsf{d}^0_M(f(a))) \\
\eqjust{\cref{d0-6}}&~\Big((\mathsf{id}_{FM} \otimes \sigma_{M,M}) \circ 
({}_{n}\mathsf{d}_M \otimes \mathsf{id}_M)\Big)(n \cdot {}_{n}\mathsf{d}^0_M(a)) \\
\eqjust{\substack{(\mathsf{id}_{FM} \otimes \sigma_{M,M})\,\circ \\ ({}_{n}\mathsf{d}_M \otimes \mathsf{id}_M) \\
\text{is a mon.~hom.}}}&~n \cdot \Big((\mathsf{id}_{FM} \otimes \sigma_{M,M}) \circ 
({}_{n}\mathsf{d}_M \otimes \mathsf{id}_M)\Big)({}_{n}\mathsf{d}^0_M(a)) \\
\eqjust{}&~n \cdot ({}_{n}\mathsf{d}_M \otimes \mathsf{id}_M)({}_{n}\mathsf{d}^0_M(a)) \\
\eqjust{\substack{{}_{n}\mathsf{d}_M \otimes \mathsf{id}_M \text{ is} \\ \text{a mon.~hom.}}}&~({}_{n}\mathsf{d}_M \otimes \mathsf{id}_M)(n \cdot {}_{n}\mathsf{d}^0_M(a)) \\
\eqjust{\cref{d0-6}}&~({}_{n}\mathsf{d}_M \otimes \mathsf{id}_M)(\mathsf{d}^0_M(f(a))) \\
\eqjust{}&~\Big(({}_{n}\mathsf{d}_M \otimes \mathsf{id}_M) \circ \mathsf{d}^0_M\Big)(f(a)). \qedhere
\end{align*}
\end{itemize}
\end{proof}
\begin{corollary} \label{interchange}
The interchange rule is satisfied by ${}_{n}\mathsf{d}$ for every $n \in \mathbb{N}$.
\end{corollary}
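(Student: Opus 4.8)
The goal is to verify the interchange rule from \cref{def:deriving-transformation} for the natural transformation ${}_{n}\mathsf{d}$, namely to establish
\[
({}_{n}\mathsf{d}_M \otimes \mathrm{id}_M) \circ {}_{n}\mathsf{d}_M =(\mathrm{id}_{FM} \otimes \sigma_{M,M}) \circ ({}_{n}\mathsf{d}_M \otimes \mathrm{id}_M) \circ {}_{n}\mathsf{d}_M
\]
for every commutative monoid $M$. The plan is to deduce this directly from \cref{interchange-preterms}, which already proves the analogous identity in which the innermost (rightmost) map is the pre-term function ${}_{n}\mathsf{d}^0_M\colon F_0M \rightarrow FM \otimes M$ rather than ${}_{n}\mathsf{d}_M$ itself. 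This is exactly the pattern by which \cref{chain} was obtained from \cref{chain-d0}.

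The single bridge between the two statements is \cref{def-d}, which gives ${}_{n}\mathsf{d}_M([a])={}_{n}\mathsf{d}^0_M(a)$ for every $[a] \in FM$. First I would fix an arbitrary element $[a] \in FM$ and evaluate each of the two composites above at $[a]$. In both composites the rightmost application is ${}_{n}\mathsf{d}_M([a])$, which by \cref{def-d} equals ${}_{n}\mathsf{d}^0_M(a)$; after this substitution the left-hand side becomes $\big(({}_{n}\mathsf{d}_M \otimes \mathrm{id}_M) \circ {}_{n}\mathsf{d}^0_M\big)(a)$ and the right-hand side becomes $\big((\mathrm{id}_{FM} \otimes \sigma_{M,M}) \circ ({}_{n}\mathsf{d}_M \otimes \mathrm{id}_M) \circ {}_{n}\mathsf{d}^0_M\big)(a)$. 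These are precisely the two sides of \cref{interchange-preterms} evaluated at $a$, so they are equal.

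Since this equality holds for every $[a] \in FM$, and both sides are monoid homomorphisms out of $FM$, the two natural transformations coincide, which is the interchange rule. I do not expect any genuine obstacle here: all of the real work—the induction on $F_0M$ treating the product rule case \cref{d0-4} (via the Sweedler-notation identities \cref{Sweed1,Sweed2}) and the self-map case \cref{d0-6}—has already been carried out in the proof of \cref{interchange-preterms}, and this corollary requires nothing beyond unwinding \cref{def-d} and reindexing the composites.
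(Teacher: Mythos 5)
Your proposal is correct and matches the paper's own proof essentially verbatim: both evaluate the two composites at an arbitrary $[a] \in FM$, use \cref{def-d} to replace ${}_{n}\mathsf{d}_M([a])$ by ${}_{n}\mathsf{d}^0_M(a)$, and then invoke \cref{interchange-preterms}, exactly as \cref{chain} was deduced from \cref{chain-d0}. Your closing appeal to both sides being monoid homomorphisms is unnecessary (pointwise equality at every $[a]$ already suffices), but this is harmless.
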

\begin{proof}
Let $M$ be a commutative monoid and let $[a] \in FM$. Using \cref{interchange-preterms}, we obtain that
\sbox{\eqjustbox}{\text{Prop.~}\ref{interchange-preterms}}
\begin{align*}
\Big((\mathsf{id}_{FM} \otimes \sigma_{M,M}) \circ 
({}_{n}\mathsf{d}_M \otimes \mathsf{id}_M) \circ {}_{n}\mathsf{d}_M\Big)([a])\eqjust{}&~\Big((\mathsf{id}_{FM} \otimes \sigma_{M,M}) \circ 
({}_{n}\mathsf{d}_M \otimes \mathsf{id}_M)\Big)({}_{n}\mathsf{d}_M([a])) \\
\eqjust{\text{Def.~}\ref{def-d}}&~\Big((\mathsf{id}_{FM} \otimes \sigma_{M,M}) \circ 
({}_{n}\mathsf{d}_M \otimes \mathsf{id}_M)\Big)({}_{n}\mathsf{d}_M^0(a)) \\
\eqjust{}&~\Big((\mathsf{id}_{FM} \otimes \sigma_{M,M}) \circ 
({}_{n}\mathsf{d}_M \otimes \mathsf{id}_M) \circ {}_{n}\mathsf{d}_M^0\Big)(a) \\
\eqjust{\text{Prop.~}\ref{interchange-preterms}}&~\Big( 
({}_{n}\mathsf{d}_M \otimes \mathsf{id}_M) \circ {}_{n}\mathsf{d}_M^0\Big)(a) \\
\eqjust{}&~({}_{n}\mathsf{d}_M \otimes \mathsf{id}_M)({}_{n}\mathsf{d}_M^0(a)) \\
\eqjust{\text{Def.~}\ref{def-d}}&~({}_{n}\mathsf{d}_M \otimes \mathsf{id}_M)({}_{n}\mathsf{d}_M([a])) \\
\eqjust{}&~\Big(({}_{n}\mathsf{d}_M \otimes \mathsf{id}_M) \circ {}_{n}\mathsf{d}_M\Big)([a]). \qedhere
\end{align*}
\end{proof}
\begin{proposition}
The natural transformation 
\[
{}_{n}\mathsf{d}_M\colon FM \rightarrow FM \otimes M
\]
is a deriving transformation for every $n \in \mathbb{N}$.
\end{proposition}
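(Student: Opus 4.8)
The plan is to assemble the four axioms required by \cref{def:deriving-transformation}, each of which has already been established in a preceding result, so that the present statement reduces to pure bookkeeping. Recall that a deriving transformation on the algebra modality $(F,m,u,\nabla,\eta)$ is by definition a natural transformation ${}_{n}\mathsf{d}_A\colon FA \rightarrow FA \otimes A$ satisfying the product rule, the linear rule, the chain rule, and the interchange rule.

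First I would recall that the family $({}_{n}\mathsf{d}_M)_{M \in \mathsf{CMon}}$ has already been shown to be a natural transformation for every $n \in \mathbb{N}$, namely in the corollary immediately following \cref{naturality-d-preterms}. This supplies the naturality clause of \cref{def:deriving-transformation}, which is a prerequisite for the very notion of a deriving transformation.

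It then remains only to cite the four axiom verifications in turn: the product rule is \cref{product}, the linear rule is \cref{linear}, the chain rule is \cref{chain}, and the interchange rule is \cref{interchange}. Combining these four facts with the naturality just recalled shows that ${}_{n}\mathsf{d}$ fulfils every clause of \cref{def:deriving-transformation}, and hence is a deriving transformation for every $n \in \mathbb{N}$.

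The hard part is not located in this final statement at all, but rather in the lemmas it invokes. In particular, \cref{chain} rests on \cref{chain-d0} and \cref{interchange} rests on \cref{interchange-preterms}, each of which is proved by a careful structural induction over the syntactic terms of $F_0FM$ (respectively $F_0M$) together with several auxiliary identities manipulated in Sweedler notation. Once those inductions are granted, the present proposition follows immediately by collecting the four rules, so the proof here amounts to little more than an appeal to the earlier results.
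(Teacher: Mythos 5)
Your proposal is correct and matches the paper's own proof, which simply states that the proposition is a consequence of \cref{product,linear,chain,interchange}; you additionally make explicit the appeal to the naturality corollary following \cref{naturality-d-preterms}, which the paper leaves implicit but which is indeed part of \cref{def:deriving-transformation}.
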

\begin{proof}
Consequence of \cref{product,linear,chain,interchange}.
\end{proof}
\section{Proving that the deriving transformations are distinct} \label{SEVEN}
\begin{proposition}
Let $n,p \in \mathbb{N}$ such that $n \neq p$. The two natural transformations
\[
{}_{n}\mathsf{d}_M\colon FM \rightarrow FM \otimes M
\]
and
\[
{}_{p}\mathsf{d}_M\colon FM \rightarrow FM \otimes M
\]
are distinct.
\end{proposition}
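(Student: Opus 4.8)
The plan is to exhibit a single commutative monoid $M$ and a single element of $FM$ on which the two components ${}_n\mathsf{d}_M$ and ${}_p\mathsf{d}_M$ disagree; since a natural transformation is determined by its components, this suffices to conclude that the two natural transformations are distinct. The natural place to look is where the symbol $f$ occurs, since \cref{d0-6} in \cref{def:d0} is the only clause that sees the parameter $n$.

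First I would take $M=\mathbb{N}$ and consider the element $\mathbf{f}([x_1])=[f(x_1)] \in F\mathbb{N}$, where the subscript is $1 \in \mathbb{N}$. Using \cref{d-6} followed by \cref{d-3}, I compute
\[
{}_{n}\mathsf{d}_{\mathbb{N}}([f(x_1)]) = n\cdot {}_{n}\mathsf{d}_{\mathbb{N}}([x_1]) = n\cdot([1] \otimes 1),
\]
and in the same way ${}_{p}\mathsf{d}_{\mathbb{N}}([f(x_1)]) = p\cdot([1] \otimes 1)$. Thus the whole statement reduces to showing that $n\cdot([1] \otimes 1) \neq p\cdot([1] \otimes 1)$ in $F\mathbb{N} \otimes \mathbb{N}$ whenever $n \neq p$, where $n\cdot(-)$ denotes the $\mathbb{N}$-action of repeated addition on the commutative monoid $F\mathbb{N} \otimes \mathbb{N}$.

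To separate these two elements I would construct a monoid homomorphism $\Phi\colon F\mathbb{N} \otimes \mathbb{N} \rightarrow \mathbb{N}$ with $\Phi([1] \otimes 1)=1$. By \cref{adjoint-initial}, applied to the commutative rig with a self-map $(\mathbb{N},\mathsf{id}_{\mathbb{N}})$ and, say, the monoid homomorphism $\mathsf{id}\colon \mathbb{N} \rightarrow (\mathbb{N},+,0)$, there is a morphism of commutative rigs with a self-map $h\colon \mathcal{F}\mathbb{N} \rightarrow (\mathbb{N},\mathsf{id}_{\mathbb{N}})$; its underlying map $h\colon F\mathbb{N} \rightarrow \mathbb{N}$ is in particular a monoid homomorphism for $+$ with $h([1])=1$. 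Combining $h$ with the isomorphism $\mathbb{N} \otimes \mathbb{N} \cong \mathbb{N}$ induced by multiplication (which is biadditive), I set $\Phi = (a \otimes b \mapsto ab)\circ(h \otimes \mathsf{id}_{\mathbb{N}})$. Then $\Phi([1] \otimes 1)=h([1])\cdot 1 = 1$, and since $\Phi$ is a monoid homomorphism it preserves $\mathbb{N}$-multiples, giving
\[
\Phi\big(n\cdot([1] \otimes 1)\big) = n, \qquad \Phi\big(p\cdot([1] \otimes 1)\big) = p.
\]
As $n \neq p$, these images differ, so the two elements of $F\mathbb{N}\otimes\mathbb{N}$ are distinct, whence ${}_{n}\mathsf{d}_{\mathbb{N}} \neq {}_{p}\mathsf{d}_{\mathbb{N}}$ and the natural transformations ${}_{n}\mathsf{d}$ and ${}_{p}\mathsf{d}$ differ.

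The only genuine subtlety is this last step. A priori the defining congruence $\sim$ on $F_0\mathbb{N}$, or the relations imposed in forming the tensor product, could identify $n\cdot([1]\otimes 1)$ with $p\cdot([1]\otimes 1)$, so distinctness cannot simply be read off from the syntactic descriptions; the homomorphism $\Phi$, whose existence rests on the universal property recorded in \cref{adjoint-initial}, is precisely what certifies that $[1] \otimes 1$ has infinite additive order and therefore detects the parameter. Everything else is an immediate application of the identities collected in \cref{rem:identities-d}.
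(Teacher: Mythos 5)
Your proof is correct and is essentially the paper's own argument: the homomorphism $h$ you obtain from \cref{adjoint-initial} applied to $\big((\mathbb{N},\mathsf{id}_{\mathbb{N}}),\mathsf{id}_{\mathbb{N}}\big)$ is precisely the counit $\epsilon_{(\mathbb{N},\mathsf{id}_{\mathbb{N}})}$ that the paper composes with, and your multiplication map $a \otimes b \mapsto ab$ is just the unitor $F\mathbb{N} \otimes \mathbb{N} \cong F\mathbb{N}$ that the paper suppresses via its strictness convention. Like the paper, you evaluate ${}_{n}\mathsf{d}_{\mathbb{N}}$ and ${}_{p}\mathsf{d}_{\mathbb{N}}$ on $\mathbf{f}([x_1])$ via \cref{d-6,d-3} and separate the two values by this rig homomorphism into $\mathbb{N}$, so the approaches coincide.
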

\begin{proof}
Choose $M=\mathbb{N}$. We have
\sbox{\eqjustbox}{$\text{identified}$}
\begin{align} \label{d-n-f-x1}
{}_{n}\mathsf{d}_\mathbb{N}(\mathbf{f}([x_1]))\eqjust{\cref{d-6}} n \cdot {}_{n}\mathsf{d}_\mathbb{N}([x_1]))\eqjust{\cref{d-3}} n \cdot ([1] \otimes 1)\eqjust{}(n[1]) \otimes 1\eqjust{\substack{\text{unitor} \\ \text{identified} \\ \text{with }=}}n[1].
\end{align}
Consider now the counit of the adjunction $\mathcal{F} \dashv U$ applied to the commutative rig with a self-map $(\mathbb{N},\mathsf{id}_\mathbb{N})$:\footnote{The self-map $\mathsf{id}_\mathbb{N}$ could be replaced with any other function from $\mathbb{N}$ to $\mathbb{N}$ in this proof.} 
\[
\epsilon_{(\mathbb{N},\mathsf{id}_\mathbb{N})}:(F(\mathbb{N},+,0),\mathbf{f}) \rightarrow (\mathbb{N},\mathsf{id}_{\mathbb{N}}).
\]
We have
\sbox{\eqjustbox}{$\epsilon_{(\mathbb{N},\mathsf{id}_{\mathbb{N}})}\text{ is a}$}
\begin{align*}
\epsilon_{(\mathbb{N},\mathsf{id}_\mathbb{N})}\Big({}_{n}\mathsf{d}_\mathbb{N}(\mathbf{f}([x_1]))\Big) 
\eqjust{\cref{d-n-f-x1}}&~ \epsilon_{(\mathbb{N},\mathsf{id}_\mathbb{N})}(n[1]) \\
\eqjust{\substack{\epsilon_{(\mathbb{N},\mathsf{id}_{\mathbb{N}})}\text{ is a} \\ \text{mon.~hom.}}}&~ n\cdot\epsilon_{(\mathbb{N},\mathsf{id}_\mathbb{N})}([1]) \\
\eqjust{\cref{eps-id-2}}&~n \cdot 1 \\
\eqjust{}&~n.
\end{align*}
In the same way, we have
\[
\epsilon_{(\mathbb{N},\mathsf{id}_\mathbb{N})}\Big({}_{p}\mathsf{d}_\mathbb{N}(\mathbf{f}([x_1]))\Big)=p.
\]
Thus 
\[
\epsilon_{(\mathbb{N},\mathsf{id}_\mathbb{N})}\Big({}_{n}\mathsf{d}_\mathbb{N}(\mathbf{f}([x_1]))\Big) \neq \epsilon_{(\mathbb{N},\mathsf{id}_\mathbb{N})}\Big({}_{p}\mathsf{d}_\mathbb{N}(\mathbf{f}([x_1]))\Big)
\]
from which it follows that 
\[
{}_{n}\mathsf{d} \neq {}_{p}\mathsf{d}. \qedhere
\]
\end{proof}
\bibliographystyle{plainurl}
\bibliography{bibliotwo}
\appendix
\section{The additive symmetric monoidal category of modules over a commutative rig} \label{APP-TENSOR}
In this appendix, $k$ is an arbitrary commutative rig. Note that if $k=\mathbb{N}$, then a $\mathbb{N}$-module is the same thing as a commutative monoid, so that everything in this appendix specializes to commutative monoids.
\begin{definition}
Let $M,N$ be two $k$-modules. A \emph{$k$-linear map} $u:M \rightarrow N$ is any function such that
\begin{itemize}
\item $u(m+m')=u(m)+u(m')$ for all $m,m' \in M$,
\item $u(\lambda m)=\lambda u(m)$ for all $\lambda \in k$ and $m \in M$,
\item $u(0)=0$.
\end{itemize}
\end{definition}
\begin{definition}
If $X$ is any set, we define a new $k$-module with underlying set
\[
k^{(X)}=\{f \in \mathbf{Set}[X,k],~f(x)=0~\text{for all but finitely many }x \in X\}.
\]
If $f,g \in k^{(X)}$, then $f+g \in k^{(X)}$ is defined by $(f+g)(x)=f(x)+g(x)$ for every $x \in X$. The element $0 \in k^{(X)}$ is defined by $0(x)=0$ for every $x \in X$. Finally, for all $\lambda \in k$ and $f \in k^{(X)}$, we define $(\lambda f)(x)=\lambda f(x)$ for every $x \in X$.
\end{definition}
For every $x \in X$, we define $\delta_{x} \in k^{(X)}$ by $\delta_x(x)=1$ and $\delta_x(y)=0$ for every $y \in X \backslash \{x\}$. We thus obtain an injection from $X$ to $k^{(X)}$ which maps $x$ to $\delta_x$. For every $f \in k^{(X)}$, we also define
\[
\mathrm{supp}(f)=\{x \in X,~f(x) \neq 0\}.
\]
For every $f \in k^{(X)}$, we have
\[
f=\underset{x \in \mathrm{Supp}(f)}{\sum}f(x)\delta_x.
\]
\begin{definition} \label{def-cong}
Let $M$ be a $k$-module. A \emph{congruence} on $M$ is an equivalence relation $\sim$ on $M$ such that:
\begin{itemize}
\item $m \sim m' \Rightarrow \lambda m \sim \lambda m'$ for all $m,m'$ and $\lambda \in k$,
\item $m \sim m' \Rightarrow m+m'' \sim m'+m''$ for all $m,m',m'' \in M$.
\end{itemize}
\end{definition}
\begin{proposition}
Let $M$ be a $k$-module and let $\sim$ be a congruence on $M$. The set $M/\sim$ can be made into a $k$-module with operations defined as follows (where $[m]$ denotes the equivalence class of $m \in M$ in $M/\sim$):
\begin{itemize}
\item $[m]+[m']:=[m+m']$ for all $[m],[m'] \in M/\sim$,
\item $\lambda[m]:=[\lambda m]$ for all $\lambda \in k$ and $[m] \in M/\sim$,
\item the additive unit is $0:=[0]$.
\end{itemize}
\end{proposition}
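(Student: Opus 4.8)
The plan is to verify the proposition in two stages: first that the three proposed operations on $M/\sim$ are well-defined, that is, independent of the choice of representatives, and then that the resulting structure satisfies every axiom of a $k$-module. Since each operation is defined by lifting the corresponding operation of $M$ through the quotient map $m \mapsto [m]$, once well-definedness is secured the module axioms will transfer essentially verbatim from those of $M$.

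For well-definedness, the only substantial case is addition. Suppose $m \sim m'$ and $n \sim n'$. The second clause of \cref{def-cong} (invariance of $\sim$ under adding a fixed element) gives $m + n \sim m' + n$. Applying that same clause to $n \sim n'$ yields $n + m' \sim n' + m'$, which, since $M$ is a commutative monoid, reads $m' + n \sim m' + n'$. Transitivity of $\sim$ then gives $m + n \sim m' + n'$, so $[m+n]=[m'+n']$ and addition is well-defined. Well-definedness of the scalar action is immediate from the first clause of \cref{def-cong}: if $m \sim m'$ then $\lambda m \sim \lambda m'$, whence $[\lambda m]=[\lambda m']$. The additive unit $[0]$ is a fixed element of $M/\sim$ and requires no check.

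It then remains to verify the $k$-module axioms. For each one --- commutativity and associativity of $+$, the unit law $[m]+[0]=[m]$, and the scalar identities $\lambda([m]+[m'])=\lambda[m]+\lambda[m']$, $(\lambda+\mu)[m]=\lambda[m]+\mu[m]$, $\lambda(\mu[m])=(\lambda\mu)[m]$, $1[m]=[m]$, together with $0[m]=[0]$ and $\lambda[0]=[0]$ --- I would choose representatives, expand using the definitions of the operations on $M/\sim$, apply the corresponding identity in the $k$-module $M$, and re-wrap the result as an equivalence class. For example, $\lambda([m]+[m']) = \lambda[m+m'] = [\lambda(m+m')] = [\lambda m + \lambda m'] = [\lambda m] + [\lambda m'] = \lambda[m]+\lambda[m']$, where the central equality is the distributivity axiom holding in $M$. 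Every remaining axiom follows by exactly the same three-line pattern.

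The main obstacle is the well-definedness of addition, and it is only a mild one: \cref{def-cong} supplies invariance of $\sim$ under adding a fixed element on one side only, so to handle a simultaneous change of both summands I must vary the two arguments one at a time and invoke commutativity of $M$ together with transitivity of $\sim$. Once this is in place, nothing else in the argument is delicate, and the module axioms reduce mechanically to the already-known axioms of $M$.
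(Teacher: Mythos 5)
Your proof is correct. The paper states this proposition without any proof, treating it as a routine fact about quotients of modules, and your argument is exactly the standard verification one would supply: the only non-mechanical point --- deducing two-sided well-definedness of addition from the one-sided closure clause of \cref{def-cong} by varying one summand at a time and combining commutativity of $M$ with transitivity of $\sim$ --- is handled correctly, and the module axioms then descend to $M/\sim$ componentwise exactly by the three-line pattern you describe.
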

Let $M$, $N$ be any two $k$-modules, we obtain a new module $k^{(M \times N)}$. We define $\sim$ as the smallest congruence on $k^{(M \times N)}$ such that:
\begin{itemize}
\item $\delta_{(\lambda m,n)} \sim \lambda\delta_{(m,n)}$ for all $\lambda \in k$ and $m,n \in M$
\item $\delta_{(m,\lambda n)} \sim \lambda\delta_{(m,n)}$ for all $\lambda \in k$ and $m,n \in M$
\item $\delta_{(m+m',n)} \sim \delta_{(m,n)}+\delta_{(m',n)}$ for all $m,m' \in M$ and $n \in N$,
\item $\delta_{(m,n+n')} \sim \delta_{(m,n)}+\delta_{(m,n')}$ for all $m \in M$ and $n,n' \in N$,
\item $\delta_{(0,n)} \sim 0$ for every $n \in N$,
\item $\delta_{(m,0)} \sim 0$ for every $m \in M$.
\end{itemize}
\begin{definition}
We define the $k$-module $M \otimes N$ as $M \otimes N:=k^{(M \times N)}/\sim$.
\end{definition}
We will write
\[
m \otimes n:=[\delta_{(m,n)}]
\]
and call $m \otimes n$ a pure tensor.

Note that for any $u=[f] \in M \otimes N$, we have
\begin{align*}
u=&~\Big[\underset{(m,n) \in \mathrm{Supp}(f)}{\sum}f{(m,n)}\delta_{(m,n)}\Big] \\
=&~\underset{(m,n) \in \mathrm{Supp}(f)}{\sum}[f{(m,n)}\delta_{(m,n)}] \\
=&~\underset{(m,n) \in \mathrm{Supp}(f)}{\sum}f{(m,n)}[\delta_{(m,n)}] \\
=&~\underset{(m,n) \in \mathrm{Supp}(f)}{\sum}f{(m,n)}(m \otimes n).
\end{align*}
It follows that $M \otimes N$ is generated by pure tensors, that is, every element of $M \otimes N$ is a finite linear combination of pure tensors. We thus obtain that if $P$ is another $k$-module and if $f,g:M \otimes N \rightarrow P$ are two $k$-linear maps, then $f=g$ if{f} $f$ and $g$ agree on pure tensors.

We will now describe the universal property satisfied by $M \otimes N$.
\begin{definition}
Let $M,N,L$ be $k$-modules. We say that a function 
\[u:M \times N \rightarrow L\]
is \emph{$k$-bilinear} if the following identities are satisfied:
\begin{itemize}
\item $u(m+m',n)=u(m,n)+u(m',n)$ for all $m,m' \in M$ and $n \in N$,
\item $u(m,n+n')=u(m,n)+u(m,n')$ for all $m \in M$ and $n,n' \in N$,
\item $u(\lambda m,n)=\lambda u(m,n)$ for all $\lambda \in k$, $m \in M$ and $n \in N$,
\item $u(m,\lambda n)=\lambda u(m,n)$ for all $\lambda \in k$, $m \in M$ and $n \in N$,
\item $u(m,0)=0$ for every $m \in M$,
\item $u(0,n)=0$ for every $n \in N$.
\end{itemize}
\end{definition}
\begin{proposition}
Let $M,N$ be any two $k$-modules, we obtain a $k$-bilinear map
\[
\pi:M \times N \rightarrow M \otimes N
\]
defined by
\[
\pi(m,n):=m \otimes n.
\]
Let $L$ be a $k$-module and let $u:M \times N \rightarrow L$ be any $k$-bilinear map. There exists a unique $k$-linear map
\[
\tilde{u}:M \otimes N \rightarrow L
\]
such that the diagram
\[
\begin{tikzcd}
M \times N \arrow[d, "\pi"'] \arrow[rd, "u"] &   \\
M \otimes N \arrow[r, "\tilde{u}"', dashed]  & L
\end{tikzcd}
\]
commutes.
\end{proposition}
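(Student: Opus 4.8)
The plan is to build $\tilde{u}$ in two stages: first as a map out of the free module $k^{(M \times N)}$, then by descending to the quotient. First I would define a $k$-linear map $\bar{u} \colon k^{(M \times N)} \rightarrow L$ on generators by $\bar{u}(\delta_{(m,n)}) := u(m,n)$ and extend linearly, i.e. $\bar{u}(f) = \sum_{(m,n) \in \mathrm{Supp}(f)} f(m,n) \cdot u(m,n)$. Since the excerpt records that every $f \in k^{(M \times N)}$ has the expression $f = \sum_{(m,n) \in \mathrm{Supp}(f)} f(m,n)\delta_{(m,n)}$, this formula is forced and determines a well-defined $k$-linear map.

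The key step is to show that $\bar{u}$ is constant on $\sim$-equivalence classes, so that it descends along $M \otimes N = k^{(M \times N)}/\sim$. I would argue via the kernel congruence: the relation $\{(f,g) : \bar{u}(f) = \bar{u}(g)\}$ is a congruence on $k^{(M \times N)}$ precisely because $\bar{u}$ is $k$-linear. It therefore suffices to check that this relation contains each of the six generators of $\sim$, since $\sim$ is by definition the \emph{smallest} congruence containing them. Each check is an immediate application of the bilinearity of $u$ together with the linearity of $\bar{u}$; for instance $\bar{u}(\delta_{(\lambda m, n)}) = u(\lambda m, n) = \lambda u(m,n) = \bar{u}(\lambda \delta_{(m,n)})$, and analogously for the additivity generators and for the degeneracy generators $\delta_{(0,n)} \sim 0$ and $\delta_{(m,0)} \sim 0$. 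Consequently $f \sim g$ implies $\bar{u}(f) = \bar{u}(g)$, and I may set $\tilde{u}([f]) := \bar{u}(f)$.

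It then remains to record the routine consequences. The map $\tilde{u}$ is $k$-linear because the module operations on $M \otimes N$ are defined representative-wise from those of $k^{(M \times N)}$, on which $\bar{u}$ is linear. The triangle commutes since $\tilde{u}(\pi(m,n)) = \tilde{u}([\delta_{(m,n)}]) = \bar{u}(\delta_{(m,n)}) = u(m,n)$. For uniqueness I would invoke the remark in the excerpt that $M \otimes N$ is generated by pure tensors, so any two $k$-linear maps out of $M \otimes N$ coincide as soon as they agree on every $m \otimes n$; thus any $\tilde{u}'$ with $\tilde{u}' \circ \pi = u$ satisfies $\tilde{u}'(m \otimes n) = u(m,n) = \tilde{u}(m \otimes n)$, forcing $\tilde{u}' = \tilde{u}$. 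The only genuinely delicate point is the descent: rather than an ad hoc induction on the inductive generation of $\sim$, one should exploit that the kernel of a linear map is automatically a congruence, which collapses the whole verification to the six defining relations.
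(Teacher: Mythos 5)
Your argument is correct, and there is in fact no proof in the paper to compare it against: this proposition appears in \cref{APP-TENSOR} as recalled background on the tensor product over a commutative rig and is stated without proof. What you give is the canonical argument, and it is sound in the rig setting: extend $u$ to $\bar{u}\colon k^{(M\times N)}\rightarrow L$ by $\bar{u}(f)=\sum_{(m,n)\in\mathrm{Supp}(f)}f(m,n)\,u(m,n)$, note that the kernel relation $\{(f,g):\bar{u}(f)=\bar{u}(g)\}$ is a congruence in the sense of \cref{def-cong} (this uses only the $k$-linearity of $\bar{u}$), and invoke minimality of $\sim$ after checking the six generating relations, each of which matches one of the six clauses in the paper's definition of a $k$-bilinear map --- note that over a rig the degeneracy clauses $u(0,n)=0$ and $u(m,0)=0$ genuinely need the corresponding axioms (or the scalar clause with $\lambda=0$), since there is no cancellation to derive them from additivity, so it matters that the paper's definition includes them. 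Your kernel-congruence device is exactly the right replacement for an induction over the inductive generation of $\sim$, and it is the same mechanism the paper uses elsewhere only in its hands-on form (e.g.\ the inductive verification in \cref{adjoint-initial} that $h_0$ respects $\sim$, where no ambient linearity is available to shortcut the induction).

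Two minor points you should patch. First, the proposition also asserts that $\pi$ itself is $k$-bilinear, which your proposal never verifies; it is immediate, since each bilinearity clause for $\pi$ is one of the six generating relations read in the quotient, e.g.\ $\pi(\lambda m,n)=[\delta_{(\lambda m,n)}]=[\lambda\delta_{(m,n)}]=\lambda[\delta_{(m,n)}]=\lambda\pi(m,n)$. Second, when checking additivity of $\bar{u}$, beware that $\mathrm{Supp}(f+g)$ may be strictly smaller than $\mathrm{Supp}(f)\cup\mathrm{Supp}(g)$ (already over a ring $k$); the fix is to sum over the union, the extra zero-coefficient terms contributing nothing by the module axiom $0\,\ell=0$. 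Neither point is a gap in the main line of the argument.
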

We will define the tensor product of two $k$-linear maps using the universal property of the tensor product.
\begin{lemma}
Let $M,N,P,Q,R$ be $k$-modules. Let $u:M \rightarrow N$, $v:P \rightarrow Q$ be $k$-linear maps and let $N \times Q \rightarrow R$ be a $k$-bilinear map. Then $b \circ (u \times v):M \times P \rightarrow R$ is a $k$-bilinear map.
\end{lemma}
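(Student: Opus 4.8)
The plan is to verify directly the six defining conditions of $k$-bilinearity (from the definition preceding the lemma) for the composite $b \circ (u \times v)$, where I read the hypothesis as supplying a $k$-bilinear map $b\colon N \times Q \rightarrow R$. The only tool needed is the elementwise description $(b \circ (u \times v))(m,p) = b(u(m),v(p))$, after which each condition is obtained by pushing the operation through the $k$-linearity of $u$ or $v$ and then through the corresponding slot of the $k$-bilinear map $b$.

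First I would treat additivity in the first variable: for $m,m' \in M$ and $p \in P$, I compute $(b \circ (u \times v))(m+m',p) = b(u(m+m'),v(p))$, rewrite $u(m+m') = u(m)+u(m')$ using the additivity of $u$, and then apply additivity of $b$ in its first argument to reach $b(u(m),v(p)) + b(u(m'),v(p))$, which is the desired sum. The additivity in the second variable is entirely symmetric, using the additivity of $v$ and the additivity of $b$ in its second argument. For the scalar conditions, given $\lambda \in k$ I would compute $(b \circ (u \times v))(\lambda m,p) = b(u(\lambda m),v(p)) = b(\lambda\, u(m),v(p)) = \lambda\, b(u(m),v(p))$, invoking $u(\lambda m) = \lambda\, u(m)$ and the homogeneity of $b$ in the first slot; again the second scalar condition follows symmetrically from the homogeneity of $v$ and of $b$ in its second slot.

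Finally, the two vanishing conditions follow immediately: $(b \circ (u \times v))(0,p) = b(u(0),v(p)) = b(0,v(p)) = 0$ using $u(0)=0$ and $b(0,-)=0$, and dually $(b \circ (u \times v))(m,0)=0$ using $v(0)=0$ and $b(-,0)=0$.

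I do not expect any genuine obstacle here: the statement is a routine elementwise check, and each of the six steps is a single application of one hypothesis followed by one bilinearity axiom of $b$. The only point requiring care is bookkeeping — keeping straight that $u$ feeds the first slot of $b$ and $v$ the second, so that at every rewrite the correct linearity axiom of $u$ or $v$ is paired with the correct slotwise axiom of $b$.
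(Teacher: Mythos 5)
Your verification is correct and is exactly the routine elementwise check the paper has in mind — indeed the paper states this lemma without proof, treating it as immediate, and your six-step computation (including the sensible reading of the unnamed map in the hypothesis as the $b$ appearing in the conclusion) fills in precisely the intended argument. Nothing further is needed.
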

\begin{definition}
Let $u:M \rightarrow N$, $v:P \rightarrow Q$ be two $k$-linear maps. We obtain a $k$-bilinear map
\[
b:M \times P \rightarrow N \otimes Q
\]
defined by stating that the diagram
\[
\begin{tikzcd}
M \times P \arrow[d, "u \times v"'] \arrow[rd, "b"] &             \\
N \times Q \arrow[r, "\pi"', dashed]                & N \otimes Q
\end{tikzcd}
\]
commutes. We define $u \otimes v:M \otimes P \rightarrow N \otimes Q$ as the unique $k$-linear map such that the diagram
\[
\begin{tikzcd}
M \times P \arrow[d, "\pi"'] \arrow[rd, "b"]  &             \\
M \otimes P \arrow[r, "u \otimes v"', dashed] & N \otimes Q
\end{tikzcd}
\]
commutes.
\end{definition}
Using the universal property of the tensor product, we can define natural transformations
\[
\alpha_{M,N,P}:(M \otimes N) \otimes P \rightarrow M \otimes (N \otimes P),
\]
\[
\lambda_M:k \otimes M \rightarrow M,
\]
\[
\rho_M:M \otimes k \rightarrow M,
\]
\[
\gamma_{M,N}:M \otimes N \rightarrow N \otimes M
\]
which will be the unique $k$-linear maps such that
\[
\alpha_{M,N,P}((m \otimes n) \otimes p)=m \otimes (n \otimes p),
\] 
\[
\lambda_M(\lambda \otimes m)=\lambda m,
\]
\[
\rho_M(m \otimes \lambda)=\lambda m,
\]
\[
\gamma_{M,N}(m \otimes n)=n \otimes m.
\]
\begin{proposition}
$(\mathsf{Mod}_k,\otimes,k)$ together with the natural transformations $\alpha,\lambda,\rho,\gamma$ is a symmetric monoidal category.
\end{proposition}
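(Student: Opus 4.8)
The plan is to rely throughout on the fact, established just above, that every $k$-module of the form $M \otimes N$ is generated by pure tensors, so that two $k$-linear maps out of an iterated tensor product coincide as soon as they agree on pure tensors. This collapses each coherence verification to a single computation on a generic pure tensor. Concretely, I must show that $\alpha$, $\lambda$, $\rho$, $\gamma$ are natural isomorphisms and that they satisfy the pentagon axiom, the triangle axiom, the hexagon axioms for the braiding, and the symmetry condition $\gamma_{N,M} \circ \gamma_{M,N} = \mathrm{id}_{M \otimes N}$.

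First I would settle well-definedness and naturality. Existence and uniqueness of each structural map follows from the universal property of the tensor product applied one variable at a time: for instance $\alpha_{M,N,P}$ is produced by first forming, for each fixed $p$, the bilinear map $(m,n) \mapsto m \otimes (n \otimes p)$, and then invoking bilinearity in the resulting $(M \otimes N)$-variable and in $p$. Naturality in every argument is then immediate, since for any compatible triple of $k$-linear maps both composites around the naturality square are $k$-linear and agree on pure tensors. Next I would exhibit inverses: I set $\alpha^{-1}_{M,N,P}$ to be the unique $k$-linear map with $\alpha^{-1}_{M,N,P}(m \otimes (n \otimes p)) = (m \otimes n) \otimes p$ and check on pure tensors that $\alpha \circ \alpha^{-1}$ and $\alpha^{-1} \circ \alpha$ are identities; likewise $\lambda^{-1}_M(m) = 1 \otimes m$ and $\rho^{-1}_M(m) = m \otimes 1$ invert $\lambda$ and $\rho$. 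The braiding is its own inverse, since $\gamma_{N,M} \circ \gamma_{M,N}$ sends $m \otimes n \mapsto n \otimes m \mapsto m \otimes n$, so it equals $\mathrm{id}_{M \otimes N}$; this simultaneously shows $\gamma$ is an isomorphism and establishes the symmetry axiom.

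Finally I would verify the coherence axioms on pure tensors. For the pentagon, I evaluate both composites on $((m \otimes n) \otimes p) \otimes q$; each successive reassociation moves the brackets rightward and both routes terminate at $m \otimes (n \otimes (p \otimes q))$. For the triangle, I evaluate on $(m \otimes \lambda) \otimes n$ and use $\rho_M(m \otimes \lambda) = \lambda m$, $\lambda_N(\lambda \otimes n) = \lambda n$, and $k$-bilinearity to see both routes yield $(\lambda m) \otimes n = m \otimes (\lambda n)$. For the hexagon, I evaluate on $(m \otimes n) \otimes p$ and track each factor through the braidings and reassociations: the left composite gives $m \otimes (n \otimes p) \mapsto (n \otimes p) \otimes m \mapsto n \otimes (p \otimes m)$, while the right composite gives $(n \otimes m) \otimes p \mapsto n \otimes (m \otimes p) \mapsto n \otimes (p \otimes m)$, so the two agree; the unit-braiding compatibility $\lambda_M = \rho_M \circ \gamma_{k,M}$ is checked the same way on $\lambda \otimes m$. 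In each case the two $k$-linear maps agree on all pure tensors, hence are equal.

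The hard part here is not conceptual but organizational: there is no single deep step, only the accumulation of coherence diagrams, and the one place that genuinely requires care is confirming that each structural map is well defined before it is manipulated---precisely what the iterated universal property supplies. Once well-definedness is secured, every axiom reduces to comparing the two sides on a single generic pure tensor.
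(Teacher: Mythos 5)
Your proof is correct and matches what the paper intends: the paper states this proposition without proof, treating it as a routine verification, and your argument---constructing the structure maps by the iterated universal property and checking naturality, invertibility, the pentagon, triangle, hexagon, and symmetry axioms on pure tensors---is exactly the standard check, carried out with the same pure-tensor generation technique the paper itself highlights in its conventions and uses throughout. The one point genuinely needing care, well-definedness of $\alpha$ and its inverse over a rig (where $M \otimes N$ is a quotient by a congruence rather than by a submodule), is correctly handled by your one-variable-at-a-time appeal to the universal property, which the paper establishes in the same appendix.
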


If $M,N$ are any two $k$-modules, then we obtain a $k$-linear map
\[
0_{M,N}:M \rightarrow N
\]
defined by the formula
\[
0_{M,N}(m)=0
\]
for every $m \in M$. If $u,v:M \rightarrow N$ are two parallel $k$-linear map, we obtain a new $k$-linear map
\[
u+v:M \rightarrow N
\]
defined by the formula
\[
(u+v)(m)=u(m)+v(m)
\]
for every $m \in M$.
\begin{proposition}
$(\mathsf{Mod}_k,\otimes,k)$ is an additive symmetric monoidal category.
\end{proposition}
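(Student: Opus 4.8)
The plan is to verify the conditions of \cref{def:additive-sym-cat}, building on the previous proposition, which established that $(\mathsf{Mod}_k,\otimes,k)$ is a symmetric monoidal category.

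First I would check that each hom-set $\mathsf{Mod}_k[M,N]$ is a commutative monoid under the operations $+$ and $0_{M,N}$ defined just above. The map $u+v$ is $k$-linear since addition and scalar multiplication in $N$ are computed pointwise, and $0_{M,N}$ is $k$-linear by definition. Associativity, commutativity, and the unit law for $+$ are then inherited pointwise from the commutative monoid $(N,+,0)$, using $(u+v)(m)=u(m)+v(m)$ and $0_{M,N}(m)=0$.

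Second I would verify the three axioms governing composition. These are immediate pointwise computations: for example, $\big((f+g)\circ h\big)(m)=f(h(m))+g(h(m))=\big((f\circ h)+(g\circ h)\big)(m)$, and symmetrically for post-composition, while $0\circ f$ and $f\circ 0$ are identically zero since a map that is constantly zero produces zero whether composed on either side.

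Third --- the part requiring the most care --- I would verify the three axioms governing the tensor product. Since $u\otimes v$ is defined abstractly through the universal property of the tensor product, equalities of $k$-linear maps out of a tensor product can only be checked by evaluating on pure tensors, relying on the principle recorded above that two $k$-linear maps out of $M\otimes N$ coincide if and only if they agree on pure tensors. For instance, to prove $(f+g)\otimes h=(f\otimes h)+(g\otimes h)$, I would evaluate both sides on a pure tensor $m\otimes p$, apply the defining identity $(u\otimes v)(m\otimes p)=u(m)\otimes v(p)$, and then use the bilinearity relation $(x+x')\otimes y=x\otimes y+x'\otimes y$ coming from the congruence defining the tensor product. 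The zero axiom $0\otimes f=f\otimes 0=0$ follows similarly from the relations $0\otimes y=0$ and $x\otimes 0=0$, i.e.\ from $\delta_{(0,n)}\sim 0$ and $\delta_{(m,0)}\sim 0$.

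The main obstacle is the bookkeeping in the tensor-product axioms, where every identity must be reduced to pure tensors and then discharged using the correct bilinearity relation of the congruence $\sim$ on $k^{(M\times N)}$. There is no conceptual difficulty here; once each identity is verified on pure tensors, the agreement-on-pure-tensors principle lifts it to an identity of morphisms, completing the verification of \cref{def:additive-sym-cat}.
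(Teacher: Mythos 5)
Your verification is correct, and since the paper states this proposition without proof, your outline supplies exactly the routine argument the paper leaves implicit (see \cref{APP-TENSOR}): the pointwise commutative-monoid structure on each hom-set $\mathsf{Mod}_k[M,N]$, pointwise checks of the composition axioms, and reduction of the tensor axioms to pure tensors via the agreement-on-pure-tensors principle together with the defining relations of the congruence $\sim$ on $k^{(M \times N)}$. One small point your word ``symmetrically'' glosses over: the identities $f \circ (g+h)=(f \circ g)+(f \circ h)$ and $f \circ 0 = 0$ are not purely definitional like their pre-composition counterparts but use the additivity of $f$ (that $f(x+y)=f(x)+f(y)$ and $f(0)=0$), which is harmless here since morphisms in $\mathsf{Mod}_k$ are $k$-linear.
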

\section{The self-map $\mathbf{f}$ is not equal to $n\cdot \mathsf{id}_{FM}$} \label{APPEN-TWO}
In this appendix, we will prove that there does not exist any $n \in \mathbb{N}$ such that for every commutative monoid $M$, the self-map $\mathbf{f}:FM \rightarrow FM$ is equal to $n \cdot \mathsf{id}_{FM}$. 
\begin{proposition}
We do not have $\mathbf{f}=n\cdot \mathsf{id}_{F(\mathbb{N},+,0)}:F(\mathbb{N},+,0) \rightarrow F(\mathbb{N},+,0)$ for any $n \in \mathbb{N}$.
\end{proposition}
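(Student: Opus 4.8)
The plan is to argue by contradiction and to exploit the universal property of $\mathcal{F} \dashv U$ through its counit. Fix $n \in \mathbb{N}$ and suppose, for contradiction, that $\mathbf{f} = n \cdot \mathsf{id}_{F(\mathbb{N},+,0)}$. The key idea is that a rig homomorphism out of $F(\mathbb{N},+,0)$ can intertwine the free self-map $\mathbf{f}$ with \emph{any} self-map on the target, whereas it can only intertwine $n \cdot \mathsf{id}$ with self-maps that are compatible with the additive structure, in particular ones that fix $0$. Since an arbitrary function on $\mathbb{N}$ need not fix $0$, this will be contradictory.

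Concretely, I would choose the commutative rig with a self-map $(\mathbb{N}, s)$, where $s \colon \mathbb{N} \rightarrow \mathbb{N}$ is any function with $s(0) \neq 0$ — for instance the successor map $s(k) = k+1$, which is a legitimate object of $\mathsf{CRig}^{\circlearrowleft}$ precisely because the self-map is allowed to be arbitrary. I then consider the counit $\epsilon_{(\mathbb{N},s)} \colon (F(\mathbb{N},+,0),\mathbf{f}) \rightarrow (\mathbb{N},s)$ supplied by \cref{prop:counit}. Being a morphism in $\mathsf{CRig}^{\circlearrowleft}$, it is a rig homomorphism that intertwines the two self-maps, so $\epsilon_{(\mathbb{N},s)}(\mathbf{f}([a])) = s(\epsilon_{(\mathbb{N},s)}([a]))$ for every $[a] \in F(\mathbb{N},+,0)$, and moreover $\epsilon_{(\mathbb{N},s)}([0]) = 0$ by \cref{eps-id-1}.

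The contradiction then comes from evaluating the two descriptions of $\mathbf{f}$ at $[0]$. On one hand, the intertwining property gives
\[
\epsilon_{(\mathbb{N},s)}(\mathbf{f}([0])) = s(\epsilon_{(\mathbb{N},s)}([0])) = s(0).
\]
On the other hand, under the assumption $\mathbf{f} = n \cdot \mathsf{id}$, and using that $\epsilon_{(\mathbb{N},s)}$ is additive together with $\epsilon_{(\mathbb{N},s)}([0]) = 0$,
\[
\epsilon_{(\mathbb{N},s)}(\mathbf{f}([0])) = \epsilon_{(\mathbb{N},s)}(n \cdot [0]) = n \cdot \epsilon_{(\mathbb{N},s)}([0]) = 0.
\]
Comparing the two computations yields $s(0) = 0$, contradicting the choice $s(0) \neq 0$. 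As $n$ was arbitrary, no $n \in \mathbb{N}$ can satisfy $\mathbf{f} = n \cdot \mathsf{id}_{F(\mathbb{N},+,0)}$.

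I expect no serious technical obstacle here; the entire force of the argument lies in the \emph{choice} of the test object $(\mathbb{N},s)$. The only point requiring care is the bookkeeping of which self-map sits on which side of the counit, and the invocation of the fact, already recorded in \cref{prop:counit}, that the counit genuinely respects the self-maps. This is precisely the mechanism used in \cref{SEVEN} to separate the ${}_{n}\mathsf{d}$; the present statement is its algebraic counterpart, expressing that the freeness of $\mathbf{f}$ prevents it from coinciding with any self-map forced to be linear, which is exactly why the deriving transformations may legitimately interpret $f$ in ways the rig structure alone does not dictate.
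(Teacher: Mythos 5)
Your proof is correct and is essentially the paper's own argument: the paper likewise uses the universal property of $(F(\mathbb{N},+,0),\mathbf{f})$ to produce a rig homomorphism to $\mathbb{N}$ intertwining $\mathbf{f}$ with a self-map $s$ satisfying $s(0)\neq 0$ (it takes the constant map $p \mapsto 1$ and the initial-object property of \cref{adjoint-initial}, where you take the successor map and the counit of \cref{prop:counit} --- the same morphism, since the counit is induced by $\mathsf{id}_{\mathbb{N}}$), and it too reaches the contradiction by evaluating at zero, via $nh([x_p])=1$ specialized to $p=0$. Your variant is marginally more direct in that evaluating the intertwining identity at $[0]$ immediately yields $s(0)=0$, but the mechanism is identical.
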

\begin{proof}
Suppose that $\mathbf{f}=n\cdot \mathsf{id}_{F(\mathbb{N},+,0)}:F(\mathbb{N},+,0) \rightarrow F(\mathbb{N},+,0)$ for some $n \in \mathbb{N}$. Consider the commutative rig $\mathbb{N}$ together with the self-map $n \mapsto 1$. The function $\mathsf{id}_{\mathbb{N}}:(\mathbb{N},+,0) \rightarrow (\mathbb{N},+,0)$ is a monoid homomorphism. We thus obtain an object $((\mathbb{N},n \mapsto 1),\mathsf{id}_\mathbb{N}:(\mathbb{N},+,0) \rightarrow (\mathbb{N},+,0))$ in the comma category $(\mathbb{N},+,0)/U$.

We know from \cref{adjoint-initial} that $((F(\mathbb{N},+,0),\mathbf{f}),u_{(\mathbb{N},+,0)}:(\mathbb{N},+,0) \rightarrow (F(\mathbb{N},+,0),+,0))$ is an initial object in the comma category $(\mathbb{N},+,0)/U$. It follows that there exists a unique morphism of commutative rigs with a self-map $h:(F(\mathbb{N},+,0),\mathbf{f}) \rightarrow (\mathbb{N},n \mapsto 1)$ such that the diagram
\begin{align} \label{diag-APP-2-first}
\begin{tikzcd}[ampersand replacement=\&]
{(\mathbb{N},+,0)} \arrow[rr, "{u_{(\mathbb{N},+,0)}}"] \arrow[rrd, equal] \&  \& {(F(\mathbb{N},+,0),+,0)} \arrow[d, "U(h)"] \\
                                                                                         \&  \& {(\mathbb{N},+,0)}                         
\end{tikzcd}
\end{align}
commutes. The commutativity of \cref{diag-APP-2-first} means that $h([x_p])=p$ for every $p \in \mathbb{N}$. But since $h$ is a morphism of commutative rigs with a self-map, the diagram
\begin{align} \label{diag-APP-2-second}
\begin{tikzcd}[ampersand replacement=\&]
{F(\mathbb{N},+,0)} \arrow[rr, "h"] \arrow[d, "{\mathbf{f}=n\cdot \mathsf{id}_{F(\mathbb{N},+,0)}}"'] \&  \& \mathbb{N} \arrow[d, "n \mapsto 1"] \\
{F(\mathbb{N},+,0)} \arrow[rr, "h"']                                                                  \&  \& \mathbb{N}                           
\end{tikzcd}
\end{align}
commutes. The commutativity of \cref{diag-APP-2-second} means that $h(\mathbf{f}([a]))=1$ for every $a \in F(\mathbb{N},+,0)$. But we have $h(\mathbf{f}([a]))=h(n[a])=nh([a])$, thus
\[
nh([a])=1
\]
for every $a \in F(\mathbb{N},+,0)$. We thus have
\[
nh([x_p])=np=1.
\]
Chosing $p=0$, we obtain
\[
0=n \cdot 0=1
\]
in $\mathbb{N}$ which is false.

We conclude that $\mathbf{f}:F(\mathbb{N},+,0) \rightarrow F(\mathbb{N},+,0)$ is not equal to $n \cdot \mathsf{id}_{F(\mathbb{N},+,0)}$ for any $n \in \mathbb{N}$. 
\end{proof}
\end{document}